\documentclass[preprint,12pt]{elsarticle}
\usepackage{geometry}
\usepackage{amssymb}
\usepackage{amsmath}
\usepackage{amsthm}

\usepackage{mathrsfs}
\usepackage{tabularx}
\usepackage{graphicx}
\usepackage{subcaption}
\usepackage{caption}
\usepackage{enumitem}
\usepackage{algorithm}
\usepackage{algpseudocode}
\usepackage{array}
\usepackage{bbding}
\usepackage{longtable}
\usepackage{listings}
\usepackage{rotating,rotfloat} 
\usepackage{xcolor}
\usepackage{multirow, bigstrut}
\usepackage{tikz}
\usetikzlibrary{3d, positioning, shapes.geometric, arrows.meta, calc}
\usepackage{stmaryrd}
\usetikzlibrary{positioning}
\usepackage{arydshln}   
\usepackage{cleveref}
\usepackage{empheq}
\usepackage{algorithm}
\usepackage{algpseudocode}

\allowdisplaybreaks

\newtheorem{theorem}{Theorem}[section]
\newtheorem{lemma}{Lemma}[section]

\newtheorem{proposition}[theorem]{Proposition}

\newtheorem{remark}{Remark}
\newtheorem{assumption}{Assumption}

\newcommand{\bbm}{\begin{bmatrix}}
\newcommand{\ebm}{\end{bmatrix}}

\journal{Journal of Computational Physics}

\begin{document}

\begin{frontmatter}



\title{A Stabilized Numerical Framework for Necrotic Tumor Growth via Coupled Boundary Integral and Obstacle Solvers}


\author[1]{Yu Feng\corref{equal}} \ead{fengyu@gbu.edu.cn}

\author[2]{Shuo Ling\corref{equal}}
\ead{lingshuo1@sjtu.edu.cn}

\author[3]{Wenjun Ying\corref{equal}}
\ead{wying@sjtu.edu.cn}

\author[4]{Zhennan Zhou\corref{cor}}
\ead{zhouzhennan@westlake.edu.cn}

\cortext[equal]{These authors contributed equally to this work.}
\cortext[cor]{Corresponding author.}

\affiliation[1]{organization={Department of Mathematics, Great Bay University},
            city={Dongguan}, 
            postcode = {523000}, 
            state = {GuangDong},
            country = {P.R.China}}
\affiliation[2]{organization={School of Mathematical Sciences, Shanghai Jiao Tong University},
            addressline={Minhang},
            city={Shanghai},
            postcode={200240},
            country={P.R.China}}
\affiliation[3]{organization={School of Mathematical Sciences, MOE-LSC and Institute of Natural Sciences, Shanghai Jiao Tong University},
            addressline={Minhang},
            city={Shanghai},
            postcode={200240},
            country={P.R.China}}
\affiliation[4]{organization={Institute for Theoretical Sciences, Westlake University},
            addressline={Xihu},
            city={Hangzhou},
            postcode={310030},
            state={Zhejiang},
            country={P.R.China}}

\begin{abstract}
We present a robust computational framework for Hele-Shaw tumor growth with necrotic cores, a problem identified as the incompressible limit of the Porous Media Equation. Simulating this system presents a fundamental challenge: while the outer boundary evolves via advection, the inner necrotic interface is defined by an {obstacle problem} and {lacks an explicit advection structure}, causing standard schemes to fail. To address this, we introduce a stabilized predictor-corrector strategy that iteratively resolves the bidirectional coupling between the nutrient-pressure fields and the domain geometry, ensuring robust time-stepping for both the advection-driven outer surface and the obstacle-defined necrotic core. We establish rigorous convergence theory for the single-interface case and demonstrate the method's robustness in capturing the topological transition of necrotic core nucleation and complex geometric evolution.
\end{abstract}



\begin{keyword}
Tumor growth \sep Necrotic core \sep Hele-Shaw flow \sep Free boundary problem \sep Obstacle problem \sep Boundary integral method
\MSC 35R35 \sep 35Q92 \sep 65R20 \sep 65K15
\end{keyword}

\end{frontmatter}




\section{Introduction}
The mathematical modeling of tumor growth has evolved significantly over the past decades, serving as a powerful tool to understand the complex interplay between nutrient diffusion, cell proliferation, and tissue mechanics. Classical cell-density models, pioneered by Greenspan \cite{greenspan1972models} and mathematically formalized by Friedman et al. \cite{friedman2001symmetry,cui2001analysis,chen2003free,cui2006formation,friedman2007mathematical}, treat the tumor as an incompressible fluid flowing through a porous medium, governed by Hele-Shaw type equations. In recent years, the theoretical foundation of these models has been rigorously strengthened by establishing the link to the Porous Media Equation (PME). Seminal works by Perthame et al. \cite{perthame2014hele,perthame2014traveling} have shown that as the stiffness parameter of the constitutive law tends to infinity (the stiff limit), the PME converges to a free boundary problem of Hele-Shaw type. This connection not only validates the incompressible assumption but also provides a robust framework for analyzing the geometric motion of the tumor interface. 
Considerable progress has been made in recent years on porous-medium-based tumor growth models and their incompressible limits. Rigorous analytical studies have established the incompressible limit of porous-medium-based tumor growth models and their convergence to Hele–Shaw type free boundary problems \cite{kim2018porous,kim2018uniform,david2021free,liu2021existence,david2022convergence,guillen2022hele,he2023incompressible,david2023phenotypic,david2024incompressible,tong2025convergence}. Complementary analytical work has further investigated the regularity, stability, and instability properties of the resulting tumor interfaces \cite{jacobs2023tumor,kim2023tumor,collins2025regularity,feng2023tumor,feng2025nonsymmetric,liu2025three,he2024porous}. On the numerical side, structure-aware schemes capable of accurately capturing the incompressible limit and the associated free boundary dynamics have been developed \cite{liu2018accurate,liu2018analysis,david2022asymptotic,jiang2026efficient}, while recent efforts have also addressed parameter inference and model calibration for related models \cite{falco2023quantifying,feng2024unified,dkebiec2025lipschitz,liu2025data}.

While the single-interface dynamics of fully viable tumors are well-understood, the modeling of necrotic cores presents deeper physical and mathematical subtleties. Early variations of necrotic models sometimes suffered from physical inconsistencies, most notably the violation of positivity constraints, where the pressure could mathematically take negative values within the core—a physically unrealistic scenario for dead tissue. This issue was theoretically resolved by recent developments in the incompressible limit analysis, particularly by Kim et al. \cite{guillen2022hele} and Dou, Shen, and Zhou \cite{dou2024tumor}. Their analysis demonstrated that the correct limiting model for tumor growth with necrosis is governed by a variational inequality, specifically an {obstacle problem}. In this rigorous formulation, the pressure is constrained to be non-negative ($p \ge 0$), and the necrotic core is naturally defined as the coincident set $\{p=0\}$ enclosed by the inner free boundary.

This mathematical clarification, however, introduces a fundamental numerical challenge. Unlike the outer tumor boundary, which is governed by Darcy's law and evolves via a well-defined normal velocity (a hyperbolic advection character), the inner necrotic boundary lacks an explicit advection structure. As the free boundary of an obstacle problem, its motion is not driven by a local velocity field but is determined implicitly by the global redistribution of the pressure field to satisfy the unilateral constraint. Consequently, standard interface tracking methods (such as Level Set or Phase Field methods typically driven by transport equations) are ill-suited for directly capturing the necrotic interface without ambiguity. Explicit time-stepping schemes, which work well for the outer boundary, often induce severe numerical instabilities when applied to the inner boundary, as the implicit interface is highly sensitive to the temporal decoupling of the pressure field.

In this paper, we propose a comprehensive computational framework to address these challenges, capable of simulating the entire lifecycle of tumor growth. Our approach utilizes the Boundary Integral (BI) \cite{hsiao2021boundary, steinbach2008numerical, atkinson1997numerical, kress1989linear, sloan1992error, beale2001method, zhao2018computation} and Kernel-Free Boundary Integral (KFBI) \cite{ying2007kernel, ying2013kernel, ying2014kernel, xie2019fourth, xie2023fourth, tan2024gpu, LING2025108816} methods implemented on uniform Cartesian grids. 
The BI method is a mesh-free approach that is particularly effective for relatively simple elliptic problems, such as the Poisson equation and the modified Helmholtz equation, where the associated boundary integral equations admit concise formulations and the fundamental solutions are explicitly available. In contrast, for interface problems, the corresponding boundary integral formulations are considerably more involved and typically require repeated evaluation of boundary and volume integrals. To improve computational efficiency in this setting, we adopt the KFBI method, which is a Cartesian-grid-based approach that evaluates potential integrals by solving equivalent simple interface problems and employs fast algorithms, such as FFT-based solvers, for efficient implementation. This hybrid formulation combines the accuracy of integral equation methods with the efficiency and robustness of Cartesian-grid solvers. 
To resolve the numerical instability associated with the necrotic boundary, we further introduce a predictor--corrector strategy coupled with an Augmented Lagrangian method \cite{lions1980approximation, karkkainen2003augmented, gill2012primal, birgin2014practical} for obstacle problems \cite{glowinski2013numerical, tremolieres2011numerical}. This strategy stabilizes the evolution by iteratively coupling the pressure field update with the domain geometry, thereby effectively resolving the implicit dependence of the inner free boundary.

The main contributions of this work are threefold, balancing theoretical rigor with algorithmic innovation:
\begin{enumerate}
    \item \textbf{Rigorous error analysis for the single-interface model.} For the fully viable tumor model (without necrotic core), we establish a complete convergence theory for the proposed BI/KFBI scheme. We prove that the method achieves second-order accuracy for the potential problems and first-order convergence for the boundary velocity. This theoretical result provides a solid mathematical foundation for the accuracy of our overall framework.
    \item \textbf{Stabilized algorithm for the double-interface model.} For the more complex necrotic tumor model, we develop a robust predictor-corrector scheme that suppresses the numerical oscillations inherent in tracking the implicit inner boundary. Furthermore, we provide a systematic numerical treatment for capturing the topological transition of necrotic core nucleation, a feature often omitted in previous studies.
    \item \textbf{Numerical validation and exploration.} We demonstrate the effectiveness of our method through extensive numerical experiments. The scheme is validated against analytical solutions for radially symmetric cases, confirming the predicted convergence rates. We also investigate the shape relaxation dynamics and stability of the tumor under non-circular perturbations, providing physical insights into the growth regimes.
\end{enumerate}

The remainder of this paper is organized as follows. Section \ref{sec:model introduction} introduces the mathematical models for both the fully viable and necrotic tumor growth. Section \ref{Numerical Methods for the Models} details the numerical algorithms, including the error analysis for the single-interface case and the stabilization strategy for the necrotic case. Section \ref{Numerical Results} presents the numerical experiments, validating the method and exploring tumor dynamics. Finally, conclusions are drawn in Section \ref{Conclusion}.

\section{Model Introduction} \label{sec:model introduction}
We consider a continuum model for avascular tumor growth in which the tumor occupies a time-dependent domain $D(t)\subseteq\mathbb{R}^2$.
The model is derived as the Hele--Shaw limit of a reactive porous medium equation and couples pressure-driven cell motion with nutrient diffusion.
Tumor growth is regulated by a nutrient-dependent growth rate, while nutrient transport is modeled under an \emph{in vitro} setting with consumption inside the tumor and a prescribed boundary concentration.
Depending on the growth mechanism, the model may give rise to either a single moving boundary or additional internal interfaces.
The precise formulation of the cell density--pressure system and the nutrient model is presented in the following subsections.

\subsection{The Cell Density and Pressure Model}
\label{sec:cell_density_pressure}

To study tumor evolution driven by nutrient consumption and supply, we consider the tumor growth model introduced in \cite{dou2024tumor}, which is derived as the Hele--Shaw limit of a reactive porous medium equation (PME). Let $\rho(x,t)$ denote the density of tumor cells and $c(x,t)$ the concentration of nutrients. We define the tumor region $D(t)$ as the support of $\rho$, namely,
\begin{equation}
\label{eqn:set D}
D(t)=\left\{x\in\mathbb{R}^2 \mid \rho(x,t)>0\right\}.
\end{equation}

The pressure inside the tumor, denoted by $p(x,t)$, is related to the density through the Hele--Shaw monotone graph:
\begin{equation}
\label{eqn:HS graph}
p(\rho)=
\left\{
  \begin{array}{ll}
    0, & 0\leq \rho<1,\\[2pt]
    \in [0,\infty), & \rho=1.
  \end{array}
\right.
\end{equation}

As shown in \cite{dou2024tumor}, given a monotone increasing growth rate function $G(\cdot)$, at each time $t$ the pressure $p(x,t)$ solves an \emph{obstacle problem} of the form
\begin{equation}
\label{eq101010}
p=\arg\min_{u\in E}
\left\{
\int_{D(t)}
\left(\frac{1}{2}|\nabla u|^2 - G(c)u\right)\,dx
\right\},
\quad
E=\left\{u\in H_0^1(D(t)) \mid u\ge 0 \ \text{a.e.} \right\},
\end{equation}
where $H_0^1(D(t))$ denotes the Sobolev space of $H^1$ functions with zero trace on $\partial D(t)$. Meanwhile, the density $\rho$ satisfies the following equation in the weak sense:
\begin{equation}
\label{eqn:rho infty}
\frac{\partial \rho}{\partial t}
-\nabla\cdot(\rho\nabla p)
=\rho\,G(c).
\end{equation}

Depending on the sign of the growth rate function $G(c)$, the coupled system
\eqref{eq101010}--\eqref{eqn:rho infty} may exhibit qualitatively different behaviors.
When $G(c)$ remains nonnegative, the density stays fully saturated and the tumor evolution is governed by a single free boundary.
In contrast, if $G(c)$ changes sign, the solution may develop unsaturated regions, leading to the emergence of an additional internal free boundary. This naturally gives rise to two distinct regimes, which we describe below.

\paragraph{Patch regime} When $G(c)$ is nonnegative, the model preserves so-called \emph{patch solutions}, meaning that the density evolves as a characteristic function of the tumor region,
$\rho(x,t)=\chi_{D(t)}(x)$.
In this case, the solution of the obstacle problem \eqref{eq101010} coincides with the solution of the elliptic boundary value problem at each time $t$
\begin{subequations}
\label{eqn:possion}
\begin{alignat}{2}
-\Delta p &= G(c), &\quad &\text{in } D(t), \\
p &= 0, &\quad &\text{on } \partial D(t).
\end{alignat}
\end{subequations}
The evolution of the tumor boundary is governed by Darcy's law: the normal velocity $v(x)$ at $x\in\partial D(t)$ is given by
\begin{equation}
v(x)=-\nabla p(x)\cdot \mathbf{n}(x),
\end{equation}
where $\mathbf{n}(x)$ denotes the unit outward normal vector to $\partial D(t)$.

\paragraph{Non-patch regime} When $G(c)$ attains negative values, cell apoptosis occurs and the density may no longer retain the patch structure. In this case, an unsaturated region with $\rho<1$ may develop, which is referred to as a \emph{necrotic core}. This phenomenon typically arises in later stages of tumor growth, when nutrient depletion in the central region leads to cell death.

In this regime, the tumor region can be decomposed as
\begin{equation}
D(t)
=
\{x\in D(t)\mid p(x,t)>0\}
\cup
\{x\in D(t)\mid p(x,t)=0\}
=: S(t)\cup N(t),
\end{equation}
where $S(t)$ denotes the saturated region and $N(t)$ the necrotic core. By the Hele--Shaw graph relation \eqref{eqn:HS graph}, $\rho=1$ in $S(t)$ and $\rho<1$ in $N(t)$. Typically, $N(t)$ forms a simply connected region in the interior of $D(t)$, surrounded by $S(t)$. The boundary $\partial N(t)$ is referred to as the inner boundary, while $\partial D(t)$ is the outer boundary of the tumor. Consequently,
\[
\partial S(t)=\partial N(t)\cup \partial D(t),
\]
see Figure~\ref{fig1011} for an illustration.

From the classical theory of obstacle problems, $N(t)$ corresponds to the coincidence set where the pressure equals the obstacle value zero, while $S(t)$ is the non-coincidence set. Moreover, at each time $t$, given the tumor region $D(t)$, the pressure $p$ satisfies
\begin{subequations}
\label{eqn:PDE-OP}
\begin{alignat}{2}
-\Delta p &= G(c), &\quad &\text{in } S(t), \\
p &= 0, &\quad &\text{on } \partial S(t), \\
\nabla p &= 0, &\quad &\text{on } \partial N(t).
\end{alignat}
\end{subequations}
We emphasize that \eqref{eqn:PDE-OP} does not constitute a classical boundary value problem, since the domain $S(t)$, equivalently $N(t)$ or its boundary, are part of the unknowns. An analytical solution in the radially symmetric case is provided in \ref{appendix:exact_R0R1}, where, given the tumor radius $R_1$, the necrotic core radius $R_0$ is determined implicitly by an equation of the form $F(R_0,R_1)=0$.

Before concluding this subsection, we introduce two representative choices of the growth rate function $G(c)$ that will be used throughout this work.
\begin{remark}
\label{rmk10101}
We consider two representative forms of the growth rate function $G(c)$:
\begin{subequations}
\label{eq:G_forms}
\begin{alignat}{2}
\label{eqn:G1}
G(c) &= G_0\,c, \\
\label{eqn:G2}
G(c) &= G_0(c-\bar c).
\end{alignat}
\end{subequations}
In the first case, $G(c)$ remains positive throughout the tumor region, and thus no necrotic core forms, i.e., $N(t)=\emptyset$ and $S(t)=D(t)$ for all $t>0$. In contrast, in the second case, cell proliferation occurs only when the nutrient concentration exceeds the threshold $\bar c$, while apoptosis dominates otherwise. As a result, a necrotic core emerges once the tumor grows sufficiently large.
\end{remark}
These two growth laws give rise to fundamentally different free boundary structures and will serve as benchmark models for the numerical methods developed in the subsequent sections.

\subsection{The Nutrient Model}
\label{sec:Two nutrient models}

To model the nutrient dynamics, we consider an \emph{in vitro} setting, in which tumor cells consume nutrients inside the tumor region while the nutrient concentration is maintained at a prescribed background level $c_B$ on the outer boundary.
Accordingly, the nutrient concentration $c(x,t)$ satisfies
\begin{subequations}
\label{eqn:original_nutrient}
\begin{alignat}{2}
\tau\,\partial_t c - \Delta c + \Phi(\rho,c) &= 0,
&\qquad &\text{in } D(t), \\
c &= c_B,
&\qquad &\text{on } \partial D(t).
\end{alignat}
\end{subequations}
Here, $\tau$ denotes the characteristic time scale of nutrient dynamics, and $\Phi(\rho,c)$ represents the nutrient consumption rate depending on the local cell density.

To simplify the model, we make two standard assumptions.
First, since nutrient diffusion typically occurs on a much faster time scale than tumor growth, we neglect the time derivative by assuming $\tau \ll 1$ (see, e.g., \cite{greenspan1972models,adam2012survey}).
Second, we assume that nutrient consumption is spatially homogeneous within each region but differs between the saturated region $S(t)$ and the necrotic core $N(t)$.
Specifically, we set
\begin{equation}
\label{eqn:two_rates}
\Phi(\rho,c) =
\begin{cases}
\lambda c, & \text{when } \rho = 1, \\[2pt]
n_c \lambda c, & \text{when } \rho < 1,
\end{cases}
\end{equation}
where the constant $n_c<1$ reflects the reduced metabolic activity of cells in the necrotic core.

Under these assumptions, the nutrient model \eqref{eqn:original_nutrient} reduces to the following elliptic system:
\begin{subequations}
\label{eqn:nutrient_model}
\begin{alignat}{2}
-\Delta c + \lambda c &= 0,
&\qquad &\text{in } S(t), \\
-\Delta c + n_c \lambda c &= 0,
&\qquad &\text{in } N(t), \\
c &= c_B,
&\qquad &\text{on } \partial D(t),
\end{alignat}
\end{subequations}
with $c \in C^1(D(t))$.

We emphasize that the nutrient system \eqref{eqn:nutrient_model} is fully coupled with the pressure obstacle problem \eqref{eq101010}, since the regions $S(t)$ and $N(t)$ are themselves part of the unknowns.
In the following sections, we develop numerical methods for this coupled free boundary system under the growth laws introduced in Remark~\ref{rmk10101}.

\section{Numerical Methods for the Models} \label{Numerical Methods for the Models}
In this section, the numerical methods developed for solving the tumor growth models introduced in this study are presented. Building upon the numerical tools described in \ref{Numerical Solvers}, they are adapted and integrated to simulate the tumor growth models. The goal is to establish a robust and efficient computational framework capable of accurately capturing the dynamics of the model boundaries while ensuring numerical stability and consistency. All solvers employed in this section are described in detail in \ref{Numerical Solvers} and are designed based on Cartesian grids, which enables their seamless integration.

Moreover, in the following discussions, in numerical computations, a curve is generally represented using a set of control points, which can be used to construct a parametrized representation passing through them via cubic periodic spline interpolation. Conversely, given a parametrized curve, an appropriate choice of parameter values allows a corresponding set of control points to be determined. In practice, the evolution of the tumor boundary is realized through the evolution of the positions of these control points.

\subsection{The Tumor Model without Necrotic Core} \label{The in vitro model}

\subsubsection{Numerical Algorithm} \label{Numerical Algorithm}
In this section, the rate function $G(c)$ was chosen to be $G(c) = G_0 c$, as shown in \eqref{eqn:G1}. The nutrient model is taken to be the in vitro model \eqref{eqn:nutrient_model}. Note that $G(c)$ is nonnegative, the Euler--Lagrange equation associated with the minimization problem \eqref{eq101010} coincides with the Poisson equation combined with a zero Dirichlet boundary condition \eqref{eqn:possion}. By the maximum principle, the solution of this PDE is nonnegative within $D(t)$, and thus it automatically satisfies the constraint $u\ge0$ in the admissible set $E$ of \eqref{eq101010}. Consequently, the optimization problem can be equivalently solved through the PDE formulation. Additionally, as mentioned in Remark \ref{rmk10101}, $N(t)=\emptyset$ and $S(t)=D(t)$ for all $t>0$, thus the free boundary model can be written as follows:
\begin{equation} \label{eq0915_1}
\begin{aligned}
-\Delta p & = G_0 c, \quad && \text{in } D(t), \\
\left.p\right|_{\partial D} & = 0, \\
\left.v\right|_{\partial D} & = -\left.\nabla p \cdot \mathbf{n}\right|_{\partial D},
\end{aligned}
\end{equation}
coupled with the in vitro nutrient regime
\begin{equation} \label{eq0915_2}
\begin{aligned}
-\Delta c + \lambda c = 0, \quad && \text{in } D(t), \\
c = c_B, \quad && \text{on } \partial D(t).
\end{aligned}
\end{equation}

To efficiently solve the free boundary problem \eqref{eq0915_1}--\eqref{eq0915_2}, we employ a combination of boundary integral (BI) \cite{hsiao2021boundary, steinbach2008numerical, atkinson1997numerical, kress1989linear, sloan1992error, beale2001method, zhao2018computation} and Kernel-Free Boundary Integral (KFBI) \cite{ying2007kernel, ying2013kernel, ying2014kernel, xie2019fourth, xie2023fourth, tan2024gpu, LING2025108816} methods. These methods are particularly suitable for problems with elliptic PDEs and moving boundaries.

The solution procedure can be summarized as follows. First, the in vitro nutrient equation \eqref{eq0915_2} has a homogeneous source term and linear boundary conditions, which allows it to be solved using the standard indirect BI method. The BI formulation reduces the problem dimensionality by one, representing the solution entirely in terms of boundary densities, which simplifies the treatment of the evolving boundary and improves computational efficiency.

For the inhomogeneous Poisson equation \eqref{eq0915_1} satisfied by $p$, we adopt a two-step approach. We first compute a particular solution that satisfies the inhomogeneous source term using the KFBI method. This step involves evaluating a volume integral without explicitly constructing or integrating singular Green’s functions, leveraging fast algorithms such as the Fast Fourier Transform (FFT) for efficiency. Once the particular solution is obtained, the remaining homogeneous problem with zero source term is solved using the standard BI method. This decomposition into particular and homogeneous solutions allows us to take advantage of the strengths of both methods: KFBI efficiently handles the volume contribution from the source term, while BI accurately enforces boundary conditions on the moving interface. The key steps of the construction, including the decomposition into particular and homogeneous solutions and the boundary integral representation, are detailed in \ref{Poisson Equations}.

The numerical procedure employed to simulate the free boundary model governed by equations~\eqref{eq0915_1}--\eqref{eq0915_2} is presented in detail, including the discretization strategy and the boundary evolution algorithm.

\paragraph{Initialization}
\begin{itemize}
    \item Given model parameters $G_0>0$, $\lambda>0$, and $c_B>0$.
    \item Specify the initial tumor region $D(0)$ through a set of $N$ control points 
    $\{\mathbf{X}_k^0\}_{k=1}^N$, which represent the boundary $\partial D(0)$ in counterclockwise order. The control points are chosen to sample the boundary approximately uniformly.
    \item Choose a time step size $\Delta t>0$.
    \item Place a rectangular computational domain $\mathcal{B}$ that fully contains $D(0)$, and discretize $\mathcal{B}$ with a uniform Cartesian grid of $(I+1)\times (J+1)$ nodes, where the grid spacing is $\Delta x = (x_{\max}-x_{\min})/I$ and $\Delta y = (y_{\max}-y_{\min})/J$.
\end{itemize}

\paragraph{Time-stepping algorithm}
For each time step $n=0,1,2,\dots$, given the current set of control points 
$\{\mathbf{X}_k^n\}_{k=1}^N$ representing the boundary $\partial D(t^n)$ (with $t^n := n \Delta t$), perform the following steps:
\begin{enumerate}
    \item \textbf{Nutrient concentration:} Solve the nutrient equation \eqref{eq0915_2} in $D(t^n)$ using BI Solver in \ref{Modified Helmholtz Equations} to obtain $c^n(\mathbf{x})$.
    \item \textbf{Pressure field:} Substitute $c^n$ into the first two equations of \eqref{eq0915_1}, and solve for $p^n(\mathbf{x})$ in $D(t^n)$ using BI\&KFBI Solver in \ref{Poisson Equations}. Note that the computed $p^n$ is available only at the Cartesian grid points. By collecting several nearby grid values and reconstructing a local quadratic polynomial (e.g., via a least--squares fit), we can evaluate this polynomial at any $\mathbf{x} \in \overline{D}(t^n)$. In particular, this reconstruction provides approximations of 
    \(
    p^n(\mathbf{x}), \
    \partial_x p^n(\mathbf{x}), \
    \partial_y p^n(\mathbf{x})
    \)
    at boundary points.

    \item \textbf{Boundary update:} 
    \begin{enumerate}
        \item Compute the normal velocity 
        \[
        v_k^n = -\nabla p^n(\mathbf{X}_k^n)\cdot \mathbf{n}_k^n, \qquad k=1,\dots,N,
        \]
        and update the control points by
        \[
        \mathbf{X}_k^{n+1} = \mathbf{X}_k^n + \Delta t\, v_k^n \mathbf{n}_k^n.
        \]
        \item Construct a smooth closed parametric curve passing through $\{\mathbf{X}_k^{n+1}\}_{k=1}^N$ using periodic cubic spline interpolation.
        \item Redistribute the control points uniformly along this curve with respect to arc length to obtain a new set $\{\mathbf{X}_k^{n+1}\}_{k=1}^N$ for the next step.
    \end{enumerate}
\end{enumerate}

This procedure yields the evolution of the tumor region $D(t)$ over time.

\begin{remark} \label{rmk1020}
At the end of each time step, we perform a correction procedure to ensure that the control points are distributed as uniformly as possible. This helps propagate these control points to the next time step and obtain a nearly uniform parameterization of the tumor boundary by means of cubic periodic spline interpolation. Such a correction is necessary; otherwise, after several time steps of evolution, the control points may become overly concentrated or too sparse, leading to an inaccurate representation of the boundary.
\end{remark}

\subsubsection{Numerical Analysis} \label{Numerical Analysis}
This section demonstrates the error analysis of the numerical algorithm in Section \ref{Numerical Algorithm}. The temporal discretization adopted in the boundary evolution follows the standard forward Euler method, which is widely used for time-dependent free boundary problems due to its simplicity and ease of implementation.

\paragraph{Solvers for Elliptic Problems}
At each time step, the nutrient field $c$ is computed using a (at least) second–order Nyström discretization of the BI method. The pressure $p$ is obtained by combining a second–order KFBI method for computing a particular solution of the source term $G_0 c$ with a (at least) second–order Nyström discretization of the BI method for the remaining homogeneous Poisson problem. Detailed convergence analyses
supporting these conclusions can be found in \ref{Modified Helmholtz Equations}, Remark~\ref{rmk_convergence_BI}, and \ref{More Challenging Interface Problems of the Modified Helmholtz Type}, Remark~\ref{rmk_convergence_KFBI}.

\paragraph{Geometric and discretization assumptions}
For the purpose of the proof, assume that the exact boundary $\Gamma(t)$ is smooth for $t\in[0,T]$, and that the fields $c$ and $p$ admit sufficiently regular extensions up to the moving interface. These assumptions are standard for the numerical analysis of such systems, as we focus on the convergence of the scheme in the regime where the interface remains sufficiently regular. While the regularity of free boundaries is a complex theoretical topic, the smoothness of the source term and boundary conditions in \eqref{eq0915_1}-\eqref{eq0915_2} suggests such regularity is maintained for at least short time scales.

Let $h$ denote the spatial discretization parameter of the Cartesian grid. Note that the BI (and KFBI) methods require boundary discretization; the boundary is uniformly discretized into $M$ points according to its parametrization. The choice of $M$ is coupled to the discrete parameters $I$ and $J$ of the Cartesian grid such that $\frac{2\pi}{M}=\mathcal{O}(h)$. After this point, any quantity with superscript~$h$ refers to its corresponding numerical approximation.

\begin{lemma}[Second-order accuracy of the BI\&KFBI solver for the Poisson problem] \label{lem1}
Let $\Omega \subset \mathbb{R}^2$ be a bounded domain with sufficiently smooth boundary.
Consider the Poisson problem
\begin{equation}
\begin{aligned}
-\Delta u &= f, \quad && \text{in } \Omega,\\
u &= 0, \quad && \text{on } \partial\Omega,
\end{aligned}
\end{equation}
and let $u_h$ be the numerical solution obtained by the BI\&KFBI solver in \ref{Poisson Equations}. Then the numerical solution satisfies the uniform error estimate
\begin{equation}
\|u - u_h\|_{\infty} \le C h^2,
\end{equation}
where $C>0$ is a constant independent of $h$. Here, $u_h$ denotes the numerical solution defined on the Cartesian grid of the computational box $\mathcal{B}$ that contains the physical domain $\Omega$, while $u$ is understood as the exact solution evaluated at the same grid points. The discrete $\ell^\infty$-norm is defined by
\[
\|v\|_{\infty} := \max_{\mathbf{x}_i \in \Omega_h} |v(\mathbf{x}_i)|,
\]
where $\Omega_h$ denotes the set of Cartesian grid points lying in $\Omega$. This convention is adopted throughout the paper.
\end{lemma}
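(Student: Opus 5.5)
The plan is to follow the two-step construction of the BI\&KFBI solver in \ref{Poisson Equations} and split the error along the decomposition $u = u_p + u_0$. Here $u_p$ is the particular solution produced by the KFBI volume-potential step, which satisfies $-\Delta u_p = f$ in $\Omega$. The remainder $u_0$ is harmonic in $\Omega$ and satisfies $u_0 = -u_p$ on $\partial\Omega$. With $u_{p,h}$ and $u_{0,h}$ the computed counterparts, I write
\[
u - u_h = (u_p - u_{p,h}) + (u_0 - u_{0,h})
\]
and bound each term in the discrete $\ell^\infty$-norm by $Ch^2$, uniformly in $h$.

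For the first term I invoke the convergence result recorded in Remark~\ref{rmk_convergence_KFBI}. The particular solution is obtained by solving a simple interface problem on the box $\mathcal{B}$: the source $f$ is extended by zero, or smoothly, outside $\Omega$, and the jump conditions across $\partial\Omega$ are prescribed. The solve uses a second-order corrected five-point stencil and an FFT solver. The standard argument is a consistency--stability one.
\begin{itemize}
\item The local truncation error is $O(h^2)$ at regular nodes. At irregular nodes it is $O(h)$, since the jump corrections are computed from Taylor expansions that are third-order accurate.
\item By the discrete maximum principle, together with a barrier-function argument of the Beale--Layton type, the irregular nodes lie in an $O(h)$-thick band. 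Their $O(h)$ truncation errors therefore contribute only $O(h^2)$ to the global error.
\end{itemize}
This gives $\|u_p - u_{p,h}\|_\infty \le C h^2$. As a byproduct it gives $O(h^2)$ accuracy of the boundary trace of $u_p$ that feeds the next step, since that trace is interpolated from grid values with sufficient order.

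For the second term I use the indirect BI formulation. I represent $u_0$ as a double-layer potential with density $\varphi$ solving the second-kind equation $(-\tfrac12 I + K)\varphi = g$ with $g = -u_p|_{\partial\Omega}$. The operator $-\tfrac12 I + K$ is a compact perturbation of a multiple of the identity and is invertible on a smooth closed curve. By Anselone's collectively compact operator theory, the Nyström discretization with $M$ points is then uniformly stable for small $h$. This yields
\[
\|\varphi - \varphi_h\|_\infty \le C\bigl(h^2 + \|g - g_h\|_\infty\bigr) = O(h^2),
\]
where the $h^2$ is the quadrature error (at least second order, by Remark~\ref{rmk_convergence_BI}) and the data error $\|g - g_h\|_\infty$ comes from the previous step. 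Evaluating the double-layer potential at the grid points of $\Omega_h$ then gives $|u_0 - u_{0,h}| \le C(\|\varphi - \varphi_h\|_\infty + \text{quadrature error})$.

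I expect the main obstacle to be grid points very close to $\partial\Omega$. There the layer-potential kernel is nearly singular, and plain quadrature loses accuracy. The first remedy I would try is to evaluate the potential near the boundary by the same KFBI interface solve: the double-layer potential is the solution of an interface problem with jump $[u] = \varphi_h$, $[\partial_n u] = 0$, and zero source. The estimate from the first step then applies verbatim and gives $O(h^2)$ uniformly up to the boundary. If instead one keeps direct quadrature, the bound requires a near-singular correction such as Beale's regularization with correction terms, which costs a logarithmic or regularization-dependent factor that one must check does not spoil the $h^2$ rate. Combining the two bounds by the triangle inequality yields $\|u - u_h\|_\infty \le Ch^2$.
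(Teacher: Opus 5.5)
Your proposal is correct and shares the paper's skeleton: the split $u=u_p+u_b$, the KFBI remark for $\|u_p-u_{p,h}\|_\infty$, and interpolation for the boundary trace.

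\textbf{Where you differ.} The real difference is how the $O(h^2)$ trace error is carried through the homogeneous Laplace step.
\begin{itemize}
\item The paper works at the PDE level. It introduces the exact harmonic function $\widetilde{u_b}$ with the \emph{computed} data $-u_{p,h}|_{\partial\Omega}$. It bounds $u_b-\widetilde{u_b}$ by the maximum principle, and then quotes second-order Nystr\"om accuracy for $\widetilde{u_b}$.
\item You work at the level of the discrete integral equation. You use the uniform boundedness of the inverse Nystr\"om operators (collectively compact theory) to get $\|\varphi-\varphi_h\|_\infty\le C(h^2+\|g-g_h\|_\infty)$. As a result, quadrature error bounds are only ever applied to the exact, smooth density $\varphi$.
\end{itemize}

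\textbf{What your route buys.} The paper's constant $C_3$ depends on the smoothness of the density for the perturbed data. That data is known only at nodes and has no $h$-uniform derivative bounds. So making $C_3$ independent of $h$ tacitly requires exactly the discrete stability you invoke; the maximum-principle route is shorter but leaves that step implicit. You also give a consistency--stability (barrier-function) justification of the KFBI bound, which the paper merely cites.

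\textbf{What your route still needs.} You make explicit the near-boundary evaluation issue that the paper absorbs into ``second-order accuracy of Nystr\"om''. There you need \emph{stability} of the near-singular evaluation, not only its accuracy on smooth densities. Concretely, you need a uniform bound on the discrete double-layer sum applied to $\varphi_h-\varphi$ at grid points within $O(h)$ of $\partial\Omega$.

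\textbf{Caution on your first remedy.} The box interface solve yields the double-layer potential of the Green's function of $\mathcal{B}$, not the free-space one used in the BI step. You must either impose the free-space potential's values on $\partial\mathcal{B}$ or reformulate the integral equation with the box Green's function. Also, the first-step estimate does not transfer verbatim. Its $O(h)$ irregular-node truncation bound uses tangential derivatives of the jump, and these are not controlled for the rough perturbation $\varphi_h-\varphi$.
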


\begin{proof}[Proof sketch]
The solution is decomposed as $u = u_p + u_b$, where $u_p$ is a particular solution computed by the KFBI method and $u_b$ solves a homogeneous Laplace problem with boundary data $-u_p|_{\partial\Omega}$. The KFBI method yields a second-order accurate approximation of $u_p$ in the maximum norm. By stable polynomial interpolation, the boundary values of the particular solution are also approximated with second-order accuracy. The difference between the exact and approximate homogeneous solutions is controlled by the maximum principle for harmonic functions. Finally, the Nyström discretization of the boundary integral formulation for the homogeneous problem is at least second-order accurate. Combining these estimates yields the stated $O(h^2)$ uniform error bound. A detailed proof is provided in \ref{proof_lemma1}.
\end{proof}

\begin{lemma}[Second-order accuracy with perturbed source term]
\label{lem2}
Let $u$ be the solution of the Poisson problem
\begin{equation} \label{eqq1}
\begin{aligned}
-\Delta u &= f, \quad && \text{in } \Omega,\\
u &= 0, \quad && \text{on } \partial\Omega,
\end{aligned}
\end{equation}
and let $\widetilde{f}$ be a perturbed source term satisfying
\begin{equation} \label{eq1208_13}
\|f - \widetilde{f}\|_\infty \leq C_f h^2, \quad \text{for some}\ C_f > 0.
\end{equation}
Let $\widetilde{u}_h$ be the numerical solution of
\begin{equation} \label{eq1208_12}
\begin{aligned}
-\Delta \widetilde{u} & = \widetilde{f}, \quad && \text{in } \Omega, \\
\widetilde{u} & = 0, \quad &&\text{on } \partial \Omega,
\end{aligned}
\end{equation}
obtained by the BI\&KFBI solver in \ref{Poisson Equations}. Then the uniform error estimate
\begin{equation}
\|u - \widetilde{u}_h\|_{\infty} \le C h^2
\end{equation}
holds, where $C > 0$ is independent of $h$.
\end{lemma}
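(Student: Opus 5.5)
The plan is to split the error by the triangle inequality into a continuous perturbation part and a discretization part:
\[
\|u-\widetilde u_h\|_\infty \le \|u-\widetilde u\|_\infty + \|\widetilde u-\widetilde u_h\|_\infty ,
\]
where $\widetilde u$ is the exact solution of \eqref{eq1208_12}. The second term is exactly the setting of Lemma~\ref{lem1} with source $\widetilde f$. Lemma~\ref{lem1} therefore gives $\|\widetilde u-\widetilde u_h\|_\infty\le \widetilde C h^2$, provided its constant is uniform over the family of perturbed sources. For the first term, $w:=u-\widetilde u$ solves $-\Delta w = f-\widetilde f$ in $\Omega$ with $w=0$ on $\partial\Omega$. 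I would bound it by the maximum principle with a barrier. Choose $\mathbf{x}_0$ and $R$ with $\Omega\subset B_R(\mathbf{x}_0)$, and set
\[
\phi(\mathbf{x})=\frac{\|f-\widetilde f\|_\infty}{4}\left(R^2-|\mathbf{x}-\mathbf{x}_0|^2\right).
\]
Then $-\Delta\phi=\|f-\widetilde f\|_\infty\ge \pm(f-\widetilde f)$ and $\phi\ge0$ on $\partial\Omega$. Comparison gives $|w|\le\phi\le \tfrac{R^2}{4}C_f h^2$ pointwise in $\overline\Omega$, hence in particular at the grid points of $\Omega_h$.

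Here $\widetilde f$ is understood as a function on $\Omega$ (or on the grid, extended as the solver requires) and \eqref{eq1208_13} holds in the sup norm. If $\widetilde f$ is only known at grid points, I would instead interpret \eqref{eq1208_13} discretely. The argument then runs at the discrete level: the KFBI step is a linear map whose discrete inverse Laplacian is stable in $\ell^\infty$, so it maps an $O(h^2)$ source perturbation to an $O(h^2)$ change in the particular solution. The interpolated boundary data of that particular solution also change by $O(h^2)$. The Nyström BI step for the harmonic correction is a stable linear map from boundary data to interior values, again by the maximum principle at the continuous level plus uniform invertibility of the discrete second-kind system. Linearity lets me write $\widetilde u_h = u_h + (\widetilde u_h-u_h)$, with $u_h$ the solver output for $f$. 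I then bound $\|u-u_h\|_\infty$ by Lemma~\ref{lem1} and $\|\widetilde u_h-u_h\|_\infty\le C\|f-\widetilde f\|_\infty$ by this stability.

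The main obstacle is this uniformity. The constant in Lemma~\ref{lem1} typically depends on higher derivatives of the source (through consistency of the KFBI correction terms and the smoothness of the particular solution). So I cannot apply Lemma~\ref{lem1} directly to $\widetilde f$ unless $\widetilde f$ is controlled in a smoother norm. That is why I prefer the second route: use Lemma~\ref{lem1} only for the smooth $f$, and handle the perturbation through $\ell^\infty$-stability of the linear solver. The stability itself I would obtain from the discrete maximum principle of the standard five-point Laplacian underlying KFBI and from boundedness of the inverse of the discretized second-kind boundary integral operator.
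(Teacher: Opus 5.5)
Your first route is the paper's proof. The paper splits $u-\widetilde u_h=(u-\widetilde u)+(\widetilde u-\widetilde u_h)$. It bounds the first term through $u-\widetilde u=\int_\Omega G_\Omega(\cdot,\mathbf y)(f-\widetilde f)(\mathbf y)\,d\mathbf y$ and $M_\Omega=\sup_{\mathbf x}\int_\Omega|G_\Omega(\mathbf x,\mathbf y)|\,d\mathbf y<\infty$. It bounds the second by re-invoking Lemma~\ref{lem1} for $\widetilde f$ with an $h$-independent constant. Your barrier is an equivalent, more elementary form of the first bound: it is exactly $\int_\Omega G_\Omega(\mathbf x,\mathbf y)\,d\mathbf y\le R^2/4$, with an explicit constant and no appeal to Green's-function properties.

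Your preferred route is genuinely different. You decompose through the solver output $u_h$ for $f$ rather than through $\widetilde u$, use Lemma~\ref{lem1} only for the fixed smooth $f$, and absorb the perturbation by $\ell^\infty$-stability of the linear solver. In that route the continuous estimate for $u-\widetilde u$ is not even needed. Your motivation is sound. The paper's step $\|\widetilde u-\widetilde u_h\|_\infty\le C_7h^2$ tacitly assumes that the constant of Lemma~\ref{lem1} is uniform over the $h$-dependent family $\widetilde f$. The sup-norm hypothesis \eqref{eq1208_13} does not supply this, since the consistency errors of KFBI and Nyström depend on derivatives of the source. So your route is the one actually consistent with the stated hypothesis. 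Its price is a discrete stability theorem for the whole BI\&KFBI pipeline, which the paper never proves.

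Most ingredients of that stability result are standard:
the discrete maximum principle for the five-point Laplacian on $\mathcal B$;
the fact that, at second order, the KFBI corrections for the volume potential see $\widetilde f$ only through the jump $[\mathbf F]=\widetilde f|_{\partial\Omega}$;
bounded interpolation weights;
a uniform bound on $(\tfrac12 I+\mathbf A_h)^{-1}$ from collectively compact Nyström theory.

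The last step, however, is not delivered by the continuous maximum principle as you claim. That principle bounds the exact harmonic extension, not the discrete double-layer evaluation at grid points. Passing from one to the other needs smooth boundary data, which a sup-norm perturbation does not provide. What you need is a uniform bound on the quadrature sums $\sum_j w_j|K(\mathbf x,\mathbf y_j)|$ over all grid points of $\Omega$. This holds at distance $\gtrsim h$ from $\partial\Omega$. For the uncorrected rule, though, the nearest node contributes on the order of $h/\operatorname{dist}(\mathbf x,\partial\Omega)$, so near the boundary the bound must come from the near-singular correction the solver uses.
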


\begin{proof}[Proof sketch]
Decompose the error as
\[
u - \widetilde{u}_h = (u - \widetilde{u}) + (\widetilde{u} - \widetilde{u}_h).
\]
The first term $u - \widetilde{u}$ is bounded by the elliptic regularity of Poisson equations with zero Dirichlet boundary condition. The second term \(\widetilde{u} - \widetilde{u}_h\) is controlled by the second order accuracy of the BI\&KFBI solver, as established in Lemma \ref{lem1}. Combining the two contributions yields the desired $O(h^2)$ uniform error bound. The detailed derivation is provided in \ref{proof_lemma2}.
\end{proof}

\begin{proposition}[Gradient and velocity error] \label{prop1}
Let $\Omega\subset\mathbb{R}^2$ be a bounded domain with smooth boundary,
and let $p$ be the exact pressure field.
Assume that a numerical solver provides approximations $p_h$ at Cartesian
grid points such that
\[
\|p - p_h\|_{\infty} \le C_p h^2.
\]

At each boundary point $\mathbf{x} \in \partial \Omega$, a local quadratic polynomial approximation of $p$ is reconstructed in a least--squares sense from nearby grid values of $p_h$, and the reconstructed gradient $\nabla p_{h, \mathrm{rec}}(\mathbf{x})$ is obtained by evaluating the gradient of this polynomial at $\mathbf{x}$. Assume that the associated reconstruction stencil is uniformly nondegenerate. Then the reconstructed gradient satisfies the uniform error estimate
\begin{equation}
\|\nabla p - \nabla p_{h,\mathrm{rec}}\|_{\infty}
= \mathcal{O}(h),
\end{equation}
and consequently, the computed normal velocity
\[
v_h = -\nabla p_{h,\mathrm{rec}}\cdot\mathbf{n}
\]
satisfies
\begin{equation}
\|v - v_h\|_{\infty}
= \mathcal{O}(h),
\end{equation}
where $v = -\nabla p\cdot\mathbf{n}$.
\end{proposition}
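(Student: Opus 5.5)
The plan is to split the gradient error at a boundary point $\mathbf{x}\in\partial\Omega$ into an interpolation part and a data-error part, using the linearity of the least-squares reconstruction. Fix $\mathbf{x}$ and let $\mathcal{S}(\mathbf{x})=\{\mathbf{x}_i\}_{i=1}^m$ be the stencil of nearby grid points in $\overline{\Omega}$. We assume the stencil lies within distance $Kh$ of $\mathbf{x}$ with $K$ independent of $h$ and $\mathbf{x}$. Write the quadratic in scaled local coordinates,
\[
q(\mathbf{y})=\sum_{|\alpha|\le 2} a_\alpha \Big(\frac{\mathbf{y}-\mathbf{x}}{h}\Big)^\alpha .
\]
The least-squares fit then reads $a=(A^TA)^{-1}A^T\mathbf{d}$, where $A$ is the $m\times 6$ matrix with entries $((\mathbf{x}_i-\mathbf{x})/h)^\alpha$ and $\mathbf{d}$ is the data vector. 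These entries are $O(1)$. The hypothesis of a uniformly nondegenerate stencil means, by definition, that $\|(A^TA)^{-1}A^T\|_{\infty}\le \Lambda$ uniformly in $h$ and $\mathbf{x}$. Denote by $R$ the linear map from data to polynomial. The reconstructed gradient at $\mathbf{x}$ is $\nabla(R\mathbf{d})(\mathbf{x})=h^{-1}(a_{(1,0)},a_{(0,1)})$.

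Decompose
\[
\nabla p(\mathbf{x})-\nabla p_{h,\mathrm{rec}}(\mathbf{x})
=\big[\nabla p(\mathbf{x})-\nabla (R\,p|_{\mathcal S})(\mathbf{x})\big]
+\nabla\big(R\,(p-p_h)|_{\mathcal S}\big)(\mathbf{x}).
\]

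\textbf{Consistency term.} Let $T_2$ be the second-order Taylor polynomial of $p$ at $\mathbf{x}$. Since least squares reproduces quadratics, $R\,T_2|_{\mathcal S}=T_2$, and so
\[
\nabla p(\mathbf{x})-\nabla(R\,p|_{\mathcal S})(\mathbf{x})=-\nabla\big(R\,(p-T_2)|_{\mathcal S}\big)(\mathbf{x}).
\]
Here we need $p\in C^3$ up to the boundary on a neighborhood containing the stencil, which is guaranteed by the regularity assumptions stated before Lemma~\ref{lem1}. Then $|p-T_2|\le C\|p\|_{C^3}(Kh)^3$ on $\mathcal S$, and the bound on $R$ gives a contribution of order $h^{-1}\cdot\Lambda\cdot Ch^3=O(h^2)$.

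\textbf{Data-error term.} The data error satisfies $|(p-p_h)(\mathbf{x}_i)|\le C_ph^2$ by hypothesis; this is exactly what Lemma~\ref{lem1} and Lemma~\ref{lem2} provide in the application. The same computation then yields $|\nabla(R(p-p_h))(\mathbf{x})|\le h^{-1}\Lambda C_p h^2=O(h)$. This term dominates, which explains why the rate is only first order: differentiating an $O(h^2)$-perturbed sample over an $O(h)$ stencil loses one power of $h$.

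The constants are uniform in $\mathbf{x}$, so taking the maximum over boundary points gives $\|\nabla p-\nabla p_{h,\mathrm{rec}}\|_\infty=O(h)$. Since $|\mathbf{n}|=1$ and $\mathbf{n}$ is the exact normal, the Cauchy--Schwarz inequality gives
\[
|v-v_h|\le|\nabla p-\nabla p_{h,\mathrm{rec}}|,
\]
which yields the velocity estimate.

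The main obstacle is making the scaling argument precise. One must state exactly what uniform nondegeneracy means and check that it delivers the $h$-independent bound $\Lambda$ on the scaled pseudo-inverse, including for points near the boundary where the stencil is one-sided. I would adopt the scaled pseudo-inverse bound as the definition, arguing that it holds for stencils containing a fixed unisolvent configuration for quadratics. A secondary point is that the stencil uses only grid points in $\overline\Omega$, where the hypothesis on $p_h$ applies and $p$ is smooth; if exterior points were used, one would instead invoke the smooth-extension assumption on $p$.
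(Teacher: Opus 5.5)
Your proposal is correct and follows essentially the same route as the paper's proof of the detailed version in the appendix. The paper writes the data as $b = Aa^\star + r + \delta$, with Taylor remainder $|r_j|\le C_R h^3$ and data error $|\delta_j|\le C_p h^2$, factors $A=\widetilde A D$ with $D=\operatorname{diag}(1,h,h,h^2,h^2,h^2)$, and uses $\lambda_{\min}(\widetilde A^\top\widetilde A)\ge\widetilde c$ to get an $O(h)$ error in the gradient coefficients; your separate consistency/data-error terms (with quadratic reproduction made explicit) and your scaled pseudo-inverse bound $\Lambda$ are just a cleaner packaging of that same argument.
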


\begin{remark}
To avoid excessive length in the statement of Proposition \ref{prop1} above, certain details that would make the statement fully rigorous have been omitted; the complete version and its proof are provided in \ref{appendix_prop1}.
\end{remark}

Before presenting the final error estimate for the boundary evolution, we first make two assumptions on the free boundary problem \eqref{eq0915_1}--\eqref{eq0915_2}.

\begin{assumption} \label{as1}
Assume that the exact boundary evolution of system
\eqref{eq0915_1}--\eqref{eq0915_2} is Lipschitz continuous with respect to boundary perturbations. That is, there exists a constant $L_v>0$ such that
for any two sufficiently close boundaries $\partial D(t)$ and $\widetilde{\partial D(t)}$ with parametric representations $\mathbf{X}$ and $\widetilde{\mathbf{X}}$, respectively,
\begin{equation} \label{eq1208_21}
\|v(\mathbf{X}) - v(\widetilde{\mathbf{X}})\|_\infty
\le L_v \|\mathbf{X} - \widetilde{\mathbf{X}}\|_\infty.
\end{equation}
Here, $v(\mathbf{X})$ denotes the normal velocity at time $t$ induced by system \eqref{eq0915_1}--\eqref{eq0915_2} on the boundary $\partial D(t)$,
which is parametrized by $\mathbf{X}$, and $v(\mathbf{X})$ can be viewed as a function from $\mathbb{S}^1$ to $\mathbb{R}$. The notation $\|\cdot\|_\infty$ denotes the supremum norm over $\mathbb{S}^1$, applied to functions, or componentwise to vector-valued functions.
\end{assumption}

\begin{assumption} \label{as2}
Assume that the exact boundary $\partial D(t)$ is sufficiently smooth, and that the unit outward normal vector depends Lipschitz continuously
on boundary perturbations. More precisely, there exists a constant $L_n > 0$ such that for any two
sufficiently close boundaries $\partial D(t)$ and
$\widetilde{\partial D(t)}$, with parametric representations $\mathbf{X}$ and $\widetilde{\mathbf{X}}$, respectively, the corresponding unit normal vectors $\mathbf{n}$ and $\widetilde{\mathbf{n}}$ satisfy
\begin{equation} \label{eq_as2}
\|\mathbf{n}(\mathbf{X}) - \mathbf{n}(\widetilde{\mathbf{X}})\|_\infty
\le L_n \|\mathbf{X} - \widetilde{\mathbf{X}}\|_\infty.
\end{equation}
\end{assumption}

We are now in a position to present the error estimate for the boundary evolution and to provide its proof by invoking the previously established lemmas and proposition.

\begin{theorem} \label{thm1}
Let $\Gamma_h^n$ denote the numerical boundary (from the algorithm in Section \ref{Numerical Algorithm}) at time $t^n = n\Delta t$, represented by the set of control points $\{\mathbf{X}^n_{k,h}\}_{k=1}^M$, and let $\Gamma^n$ denote the exact boundary, represented by $\{\mathbf{X}^n_k\}_{k=1}^M$. Define the boundary error at step $n$ as
\[
\mathbf{e}^n_k := \mathbf{X}^n_k - \mathbf{X}^n_{k,h},
\]   
and error vector 
$$\mathbf{e}^n := (\mathbf{e}_1^n, 
\mathbf{e}_2^n, ..., \mathbf{e}_M^n).$$ 
Then the error estimate for the boundary evolution holds:
\begin{equation} \label{eq1208_26}
\|\mathbf{e}^n\|_\infty = \mathcal{O}(h) + \mathcal{O}(\Delta t), \quad \text{for } t^n \in [0,T].
\end{equation}
Here and in the sequel, the norm $\|\mathbf{e}^n\|_\infty$ is defined as
\[
\|\mathbf{e}^n\|_\infty := \max_{1 \le k \le M} |\mathbf{e}^n_k|_\infty,
\]
where $|\cdot|_\infty$ denotes the supremum norm in $\mathbb{R}^2$.
\end{theorem}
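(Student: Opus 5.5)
The plan is a standard consistency-plus-stability argument for the forward Euler update. At each step we split the one-step error into a local truncation error of the exact flow, a spatial discretization error of the velocity and normal on the exact boundary, and a propagated error controlled by Assumptions~\ref{as1}--\ref{as2}. A discrete Gronwall inequality then closes the estimate.

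\textbf{Step 1 (exact Euler step).} By smoothness of $\Gamma(t)$, the exact control points satisfy
\[
\mathbf{X}^{n+1}_k=\mathbf{X}^n_k+\Delta t\, v(\mathbf{X}^n)_k\,\mathbf{n}(\mathbf{X}^n)_k+\boldsymbol{\tau}^n_k,\qquad |\boldsymbol{\tau}^n_k|_\infty\le C_\tau\Delta t^2 .
\]
This uses Taylor expansion in time, with the tangential reparametrization absorbed by the convention that $\mathbf{X}^n_k$ is the arc-length-uniform parametrization. The numerical scheme reads
\[
\mathbf{X}^{n+1}_{k,h}=\mathbf{X}^n_{k,h}+\Delta t\, v_h(\mathbf{X}^n_h)_k\,\mathbf{n}_h(\mathbf{X}^n_h)_k ,
\]
followed by spline interpolation and redistribution. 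The spline and redistribution error on smooth curves is $\mathcal{O}(h^4)$ per step, hence $\mathcal{O}(h\Delta t)$ under a mild step coupling. Alternatively, the redistribution can be treated as a $1$-Lipschitz projection that contributes a further $\mathcal{O}(h^2)$ local error. We will fold this into the consistency term.

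\textbf{Step 2 (velocity error splitting).} We subtract the two updates and write
\[
v\mathbf{n}(\mathbf{X}^n)-v_h\mathbf{n}_h(\mathbf{X}^n_h)=\bigl[v\mathbf{n}(\mathbf{X}^n)-v\mathbf{n}(\mathbf{X}^n_h)\bigr]+\bigl[v\mathbf{n}(\mathbf{X}^n_h)-v_h\mathbf{n}_h(\mathbf{X}^n_h)\bigr].
\]
The first bracket is bounded by $(L_v\|\mathbf{n}\|_\infty+\|v\|_\infty L_n)\|\mathbf{e}^n\|_\infty=:L\|\mathbf{e}^n\|_\infty$, using Assumptions~\ref{as1} and \ref{as2} together with boundedness of $v$ near the exact solution.

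For the second bracket, on the numerical boundary $\Gamma^n_h$ (smooth, since it is a spline), we apply the elliptic results. Lemma~\ref{lem1} gives $\|c-c_h\|_\infty=\mathcal{O}(h^2)$ for the BI nutrient solve (Remark~\ref{rmk_convergence_BI}). Hence the source $G_0c_h$ is an $\mathcal{O}(h^2)$ perturbation, and Lemma~\ref{lem2} gives $\|p-p_h\|_\infty=\mathcal{O}(h^2)$. Proposition~\ref{prop1} then yields $\|v-v_h\|_\infty=\mathcal{O}(h)$. The spline normal satisfies $\|\mathbf{n}-\mathbf{n}_h\|_\infty=\mathcal{O}(h^3)$. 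So the second bracket is $\mathcal{O}(h)$.

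\textbf{Step 3 (recursion).} Combining the steps gives
\[
\|\mathbf{e}^{n+1}\|_\infty\le(1+L\Delta t)\|\mathbf{e}^n\|_\infty+C\Delta t\,(h+\Delta t).
\]
With $\mathbf{e}^0=0$ (or $\mathcal{O}(h)$), the discrete Gronwall inequality gives
\[
\|\mathbf{e}^n\|_\infty\le e^{LT}\,\frac{C}{L}\,(h+\Delta t),
\]
which is \eqref{eq1208_26}.

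\textbf{Main obstacle.} The delicate point is uniformity. The constants in Lemmas~\ref{lem1}--\ref{lem2} and Proposition~\ref{prop1} must hold uniformly over all numerical boundaries $\Gamma_h^n$, and the Lipschitz assumptions require the boundaries to be ``sufficiently close''. I would handle both with a bootstrap/induction argument. Suppose $\|\mathbf{e}^m\|_\infty\le K(h+\Delta t)$ for $m\le n$. For $h,\Delta t$ small, $\Gamma^m_h$ then lies in a small neighborhood of the smooth family $\Gamma(t)$, and its curvature stays bounded by the spline stability. That neighborhood gives uniform constants and validates the assumptions, and the recursion above reproduces the bound at step $n+1$ with the same $K=e^{LT}C/L$.
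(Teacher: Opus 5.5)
Your main line is the paper's proof. It uses the same add-and-subtract of $v(\mathbf{X}^n_{k,h})\mathbf{n}_{k,h}$, the same constant $L=L_v+B_vL_n$ from Assumptions~\ref{as1}--\ref{as2}, the same chain (second-order BI solve for $c$, then Lemma~\ref{lem2}, then Proposition~\ref{prop1}) giving $\|v-v_h\|_\infty=\mathcal{O}(h)$, and the same geometric-sum closure. One citation is off: the $\mathcal{O}(h^2)$ bound for $c$ comes from Remark~\ref{rmk_convergence_BI}, not Lemma~\ref{lem1}, which concerns the Poisson solver.

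The justifications you add beyond the paper, however, do not hold as stated.

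\textbf{Step~1 convention.} You cannot take $\mathbf{X}^n_k$ to be the arc-length-uniform points of $\Gamma(t^n)$ and still get an $\mathcal{O}(\Delta t^2)$ defect with a purely normal velocity. Preserving a uniform arc-length parametrization under normal motion needs a tangential velocity $w$ with $\partial_s w=\overline{\kappa v}-\kappa v$, where $\kappa$ is the curvature and the bar is the mean over $\Gamma$. This is $\mathcal{O}(1)$ unless $\kappa v$ is constant along $\Gamma$ (e.g.\ circles). So the defect is $\mathcal{O}(\Delta t)$ per step and sums to $\mathcal{O}(1)$.

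\textbf{Redistribution.} Redistributing on both sides to cancel this defect requires a stability estimate for the redistribution map, and the $1$-Lipschitz property you invoke is not available. Redistribution turns normal perturbations into tangential shifts through the change in total length. Hence a normal perturbation of size $\epsilon$ on half of a circle can produce displacements larger than $\epsilon$. Moreover, an $\mathcal{O}(h^2)$ error per step sums to $\mathcal{O}(h^2/\Delta t)$, which is not of the form $C\Delta t(h+\Delta t)$ per step. The paper avoids all of this by tacitly letting the exact control points move with the normal velocity, so its $\mathcal{O}(\Delta t^2)$ is a genuine Taylor remainder. It then sets spline reconstruction and redistribution aside in Remark~\ref{rmk:redistribution_error}.

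\textbf{Bootstrap.} The hypothesis $\|\mathbf{e}^m\|_\infty\le K(h+\Delta t)$ does give the closeness needed to invoke Assumptions~\ref{as1}--\ref{as2}, which the paper never checks. It does not, however, bound the curvature of the spline through $\{\mathbf{X}^m_{k,h}\}$. Node perturbations of size $\epsilon$ at spacing $\mathcal{O}(h)$ can change second differences by $\mathcal{O}(\epsilon/h^2)=\mathcal{O}(1/h)$. So the bootstrap does not deliver uniform constants in Lemma~\ref{lem2} and Proposition~\ref{prop1} on the numerical domains; the paper simply assumes these.

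At the paper's level of rigor, your argument is complete once you adopt its convention for the exact control points. The extra claims should be dropped or replaced by a genuine argument.
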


\begin{proof}
The boundary is updated using the forward Euler scheme
\begin{equation*}
\mathbf{X}^{n+1}_k = \mathbf{X}^n_k + \Delta t \, v(\mathbf{X}^n_k) \mathbf{n}_k + \mathcal{O}(\Delta t^2), \qquad
\mathbf{X}^{n+1}_{k,h} = \mathbf{X}^n_{k,h} + \Delta t \, v_h(\mathbf{X}^n_{k,h}) \mathbf{n}_{k,h},
\end{equation*}
where $v$, $v_h$ denote the exact and numerical normal velocities, respectively, and $\mathbf{n}_k$, $\mathbf{n}_{k,h}$ denote the exact and numerical unit normal vectors. Subtracting the numerical update from the exact update gives
\[
\mathbf{e}^{n+1}_k = \mathbf{e}^n_k + \Delta t \, \bigl(v(\mathbf{X}^n_k)\mathbf{n}_k - v_h(\mathbf{X}^n_{k,h})\mathbf{n}_{k,h}\bigr) + \mathcal{O}(\Delta t^2).
\]
Adding and subtracting $v(\mathbf{X}^n_{k,h})\mathbf{n}_{k,h}$ yields
\begin{equation} \label{eq0203_1}
\mathbf{e}^{n+1}_k = \mathbf{e}^n_k + \Delta t \, \bigl(v(\mathbf{X}^n_k)\mathbf{n}_k - v(\mathbf{X}^n_{k,h})\mathbf{n}_{k,h}\bigr)
+ \Delta t \, \bigl(v(\mathbf{X}^n_{k,h})\mathbf{n}_{k,h} - v_h(\mathbf{X}^n_{k,h})\mathbf{n}_{k,h}\bigr) + \mathcal{O}(\Delta t^2).
\end{equation}

To estimate the right hand side of equation \eqref{eq0203_1}, we consider firstly that
$$
v(\mathbf{X}^n_k) \mathbf{n}_k 
- v(\mathbf{X}^n_{k,h}) \mathbf{n}_{k,h} 
= (v(\mathbf{X}^n_k) - v(\mathbf{X}^n_{k,h})) \mathbf{n}_{k} 
+ v(\mathbf{X}^n_{k,h}) (\mathbf{n}_{k}
- \mathbf{n}_{k,h}).
$$
Assumption \ref{as1} implies that 
$$
|v(\mathbf{X}^n_k) - v(\mathbf{X}^n_{k,h})| \leq
L_v \| \mathbf{X}^n - \mathbf{X}^n_h \|_\infty,
$$
and Assumption \ref{as2} implies that
$$
| \mathbf{n}_{k} - \mathbf{n}_{k,h} |_\infty \leq
L_n \| \mathbf{X}^n - \mathbf{X}^n_h \|_\infty.
$$
Note that in Assumptions \ref{as1} and \ref{as2}, the notation $\|\cdot\|_\infty$ denotes the supremum norm for (vector-valued) functions defined on the boundary, whereas in the above inequalities it is naturally interpreted as the discrete $\ell^\infty$ norm over the control points. Additionally, we assume that the speed $v$ of the system \eqref{eq0915_1}--\eqref{eq0915_2} within the time interval $[0, T]$ has an upper bound $B_v$. Based on the above estimations, 
$$
|v(\mathbf{X}^n_k) \mathbf{n}_k 
- v(\mathbf{X}^n_{k,h}) \mathbf{n}_{k,h}|
\leq
(L_v + B_v L_n) \| \mathbf{X}^n - \mathbf{X}^n_h \|_\infty,
$$
where $|\mathbf{n}_k|_\infty \leq 1$ is used. Then we obtain from \eqref{eq0203_1} that
\begin{equation*} \|\mathbf{e}^{n+1}\|_\infty \le (1 + L \Delta t) \|\mathbf{e}^n\|_\infty + \Delta t \, \|v - v_h\|_\infty + \mathcal{O}(\Delta t^2), \end{equation*}
where $|\mathbf{n}_{k, h}|_\infty \leq 1$ is used and $L := L_v + B_v L_n$.

Recalling the computation of $v_h$, we first note that the numerical solution of $c$ is obtained by a standard Nyström discretization of the boundary integral method and is second-order accurate. Consequently, Lemma \ref{lem2} implies that
the numerical solution of $p$ is also second-order accurate, which satisfies the requirement of Proposition \ref{prop1}. Therefore, by Proposition \ref{prop1}, we obtain
\[
\|v - v_h\|_\infty = \mathcal{O}(h).
\]
Then we have 
\begin{equation} \label{eq1208_23}
\|\mathbf{e}^{n}\|_\infty 
\le (1 + L \Delta t) \|\mathbf{e}^{n - 1}\|_\infty + C_1 \Delta t \cdot h + C_2 \Delta t^2, \quad \exists C_1, C_2 > 0.
\end{equation}
Iterating \eqref{eq1208_23} recursively, we obtain
\begin{equation*}
\|\mathbf{e}^n\|_\infty \le (1 + L \Delta t)^n \|\mathbf{e}^0\|_\infty + \sum_{j=0}^{n-1} (1 + L \Delta t)^{j} \left( C_1 \Delta t \cdot h + C_2 \Delta t^2 \right),
\end{equation*}
where $\|\mathbf{e}^0\|_\infty = 0$ if the initial boundary is represented exactly. Summing the geometric series and using the inequality $(1 + L \Delta t)^n \le e^{L t^n}$ gives
\begin{align*}
\|\mathbf{e}^n\|_\infty 
&\le \left( C_1 h + C_2 \Delta t \right) \Delta t \sum_{j=0}^{n-1} (1 + L \Delta t)^{j} \\[2mm]
&\le \frac{C_1 h + C_2 \Delta t}{L} \left( e^{L t^n} - 1 \right) \\[1mm]
&\le \frac{e^{L T} - 1}{L} \left( C_1 h + C_2 \Delta t \right),
\end{align*}
which implies the final estimate \eqref{eq1208_26}.
\end{proof}


\begin{remark} \label{rmk:redistribution_error}
In the numerical implementation, periodic cubic spline interpolation is employed to reconstruct the parametric representation of the boundary from the discrete control points, followed by a redistribution process to maintain uniform arc-length parameterization. It is important to note that since the boundaries are assumed to be sufficiently smooth, the geometric errors incurred by spline interpolation and re-parameterization are of a higher order relative to the $\mathcal{O}(h)$ velocity error. Consequently, these operations do not degrade the overall convergence rates established in Theorem~\ref{thm1}.
\end{remark}

\subsection{The Tumor Model with Necrotic Core} \label{The in vitro Model with Necrotic Core}
In this section, the rate function $G(c)$ is chosen to be $G(c) = G_0 (c - \bar{c})$, as shown in \eqref{eqn:G2}. The nutrient model is taken to be the in vitro model \eqref{eqn:nutrient_model}. For the sake of completeness of exposition, we restate some concepts already introduced in Section \ref{sec:model introduction}. We
divide the tumor domain $D(t)$ into two typical regions according to cell density: the necrotic core $N(t)$, which is
a single connected region where the density is strictly less than 1; and the saturated region $S(t)$ where the density reaches the maximum value 1. Mathematically,
\begin{align*}
S(t) &:= \{x \in D(t) \mid \rho(t,x) = 1\},\\
N(t) &:=D(t)\setminus S(t)= \{x \in D(t) \mid 0 \le \rho(t,x) < 1\}.
\end{align*}
Two types of boundaries are defined:
\begin{align*}
\Gamma_0(t) &:= \partial N(t) \quad \text{(boundary of the necrotic core)},\\
\Gamma_1(t) &:= \partial D(t) \quad \text{(tumor boundary)}.    
\end{align*}

For clarity of the domains and the inner and outer boundaries, a schematic illustration is provided in Figure~\ref{fig1011}. 
\begin{figure}[htbp]
    \centering
    \includegraphics[width=0.5\linewidth]{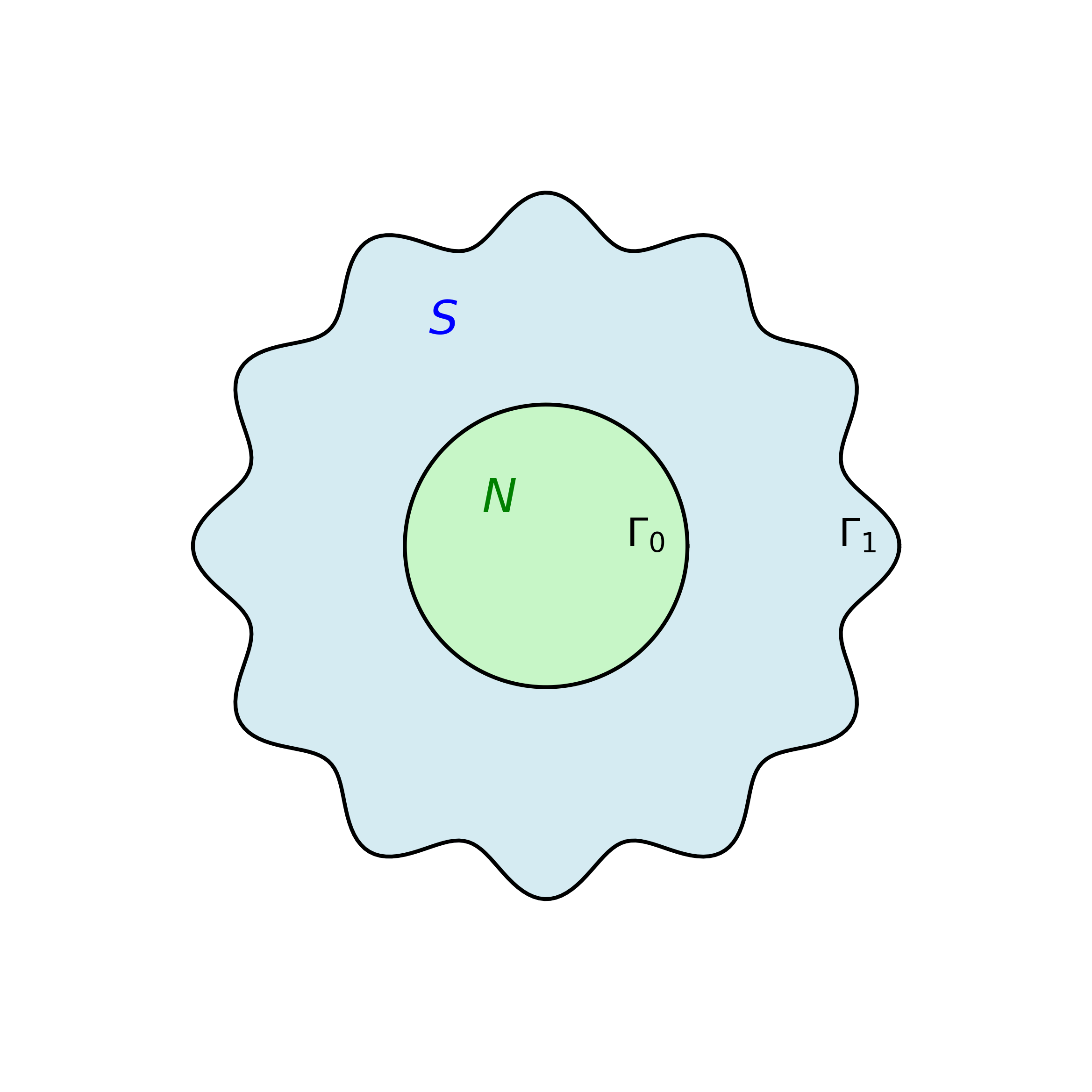}
    \caption{Schematic plot for the free boundary model.}
    \label{fig1011}
\end{figure}

The free boundary model can be written as follows:
\begin{align}
&p =\arg \min_{u \in E}\left(\int_{D(t)}\left(\frac{1}{2}|\nabla u|^2 - G_0(c-\bar{c}) u\right) dx\right), \quad  
E=\left\{u \in H_0^1(D(t)) : u \geq 0\ \text{a.e.}\right\},
\label{eq0916_1} \\
&\qquad \qquad\qquad \qquad\qquad \qquad\qquad \qquad\left.v\right|_{\partial D} = -\left.\nabla p \cdot \mathbf{n}\right|_{\partial D},
\label{eq0916_2}
\end{align}
coupled with the nutrient diffusion equations
\begin{equation} \label{eq0916_3}
\begin{aligned}
-\Delta c + \lambda n_c c &= 0, && \text{in } N(t), \\
-\Delta c + \lambda c &= 0, && \text{in } S(t), \\
[c] &= 0, && \text{on } \Gamma_0(t), \\
\left[\frac{\partial c}{\partial \mathbf{n}}\right] &= 0, && \text{on } \Gamma_0(t), \\
c(x) &= c_B, && \text{on } \Gamma_1(t).
\end{aligned}
\end{equation}

\subsubsection{Construction of Numerical Algorithm}

The simultaneous evolution of the outer and inner boundaries presents a fundamental numerical challenge due to their distinct mathematical natures. The outer boundary $\Gamma_1(t)$ is a classical sharp interface governed by Darcy's law; it possesses a well-defined normal velocity $v_n = -\nabla p \cdot \mathbf{n}$ and evolves via standard advection. In stark contrast, the inner boundary $\Gamma_0(t)$ is the free boundary of the coincident set associated with the variational inequality \eqref{eq0916_1}. Mathematically, it is defined as the interface separating the region where $p > 0$ from the region where $p = 0$. Crucially, this boundary lacks an explicit advection structure. Its motion is not driven by a local velocity field but is determined implicitly by the global redistribution of the pressure field to satisfy the unilateral constraint. Consequently, standard interface tracking methods (such as Level Set or Phase Field methods typically driven by transport equations) are ill-suited for directly capturing $\Gamma_0(t)$ without ambiguity.

To resolve the necrotic core boundary accurately at any given time instance, we rely on the precise identification of the coincident set $\{x \in D(t) \mid p(x)=0\}$. We employ the Augmented Lagrangian method, implemented via the Primal-Dual Active Set (PDAS) algorithm. As detailed in \ref{Obstacle Problems}, the PDAS algorithm leverages the semi-smooth Newton property of the problem, allowing it to identify the exact active set (and thus the discrete free boundary) in a finite number of iterations. This ensures that, for a fixed domain and source term, the location of the inner boundary is determined with high precision and without the smearing effects typical of regularization methods.

However, possessing a robust static solver is insufficient for the dynamic simulation. A critical issue arises when coupling the boundary evolution with the time-stepping scheme. Since $\Gamma_0(t)$ is inherently implicit, its position at the next time step $t^{n+1}$ depends on the pressure field $p^{n+1}$, which in turn depends on the domain geometry at $t^{n+1}$. This circular dependency creates a severe stability bottleneck. A naive approach, akin to a standard forward Euler scheme, would evolve the outer boundary explicitly and update the inner boundary based solely on the current pressure snapshot:
\begin{equation}
\Gamma_1^{n+1} = \Gamma_1^{n} + \Delta t\ v^n \mathbf{n}_{\Gamma_1}^n, \quad
\Gamma_0^{n+1} = \partial \{x \in  D(t^n) \mid p^n(x) = 0\}.
\end{equation}
While this decouples the geometric update from the field solve, it fails to capture the dynamic reshaping of the pressure profile. Our numerical experiments (see Section \ref{Numerical Results for the in vitro Model with Necrotic Core}, Example 4) reveal that this explicit strategy leads to significant lagging errors and spurious oscillations in the radius of the necrotic core, causing the simulation to break down.

To overcome this instability, a stabilization strategy is required to resolve the strong coupling between the pressure field and the domain geometry. We propose a predictor-corrector strategy that incorporates information from the future time step. Instead of relying on a single snapshot, we first generate a tentative inner boundary (prediction) to approximate the geometry at $t^{n+1}$, update the pressure field based on this prediction, and then refine the boundary position (correction). The final boundary location is obtained by averaging these estimates. This implicit coupling effectively suppresses numerical oscillations and ensures that the evolution of the coincident set remains consistent with the diffusion dynamics. The detailed workflow is illustrated in Figure \ref{fig0114}.

\begin{figure}[htbp]
\centering
\begin{tikzpicture}[>=Stealth, node distance = 1.5cm and 3cm, every node/.style={font=\small}, box/.style={draw, rounded corners, align=center, inner sep=6pt}]

\node[box] (A) {$\Gamma_1^n$ \\ $\Gamma_0^n$};
\node[box, right=of A] (B) {$\Gamma_1^n$ unchanged \\
$\Gamma_0^{n,*}=\partial\{p^{n,*}=0\}$};

\node[box, right=of B] (C) {
$\Gamma_1^{n+1} = \Gamma_1^n + \Delta t (-\nabla p^{n,**}) \mathbf{n}$ \\
$\Gamma_0^{n,*}$ unchanged};
\node[box, below=of C] (D) {
$\Gamma_1^{n+1}$ unchanged\\
$\Gamma_0^{n,***}=\partial\{p^{n,***}=0\}$};

\node[box] (E) at ($(A.center |- D.center) + (1.6, 0)$) {
$\Gamma_1^{n+1}$ unchanged \\
$\Gamma_0^{n+1} = \tfrac12 (\Gamma_0^{n,*}+\Gamma_0^{n,***})$
};
\draw[->] (A) -- node[above] {\footnotesize solve $c,p$} node[below] {\footnotesize obtain $c^{n, *},p^{n, *}$} (B);
\draw[->] (B) -- node[above] {\footnotesize solve $c,p$} node[below] {\footnotesize obtain $c^{n, **},p^{n, **}$} (C);
\draw[->] (C) -- node[left] {\footnotesize solve $c,p$} node[right] {\footnotesize to obtain $c^{n, ***},p^{n, ***}$} (D);
\draw[->] (D) -- node[above] {\footnotesize average for inner boundary} node[below] {\footnotesize obtain $\Gamma_0^{n+1}$} (E);
\end{tikzpicture}
\caption{Predictor--corrector strategy for updating the inner and outer boundaries.}
\label{fig0114}
\end{figure}
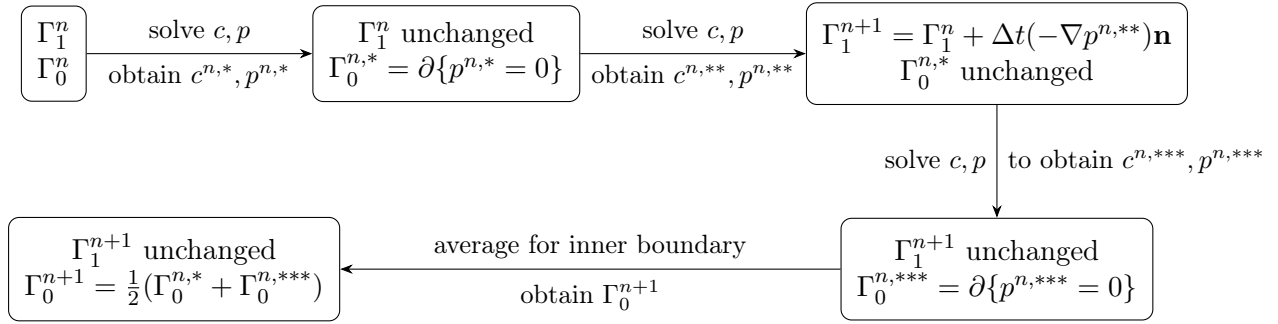

In this way, the inner boundary evolution is stabilized and remains consistent with the dynamics of the outer boundary. The nutrient diffusion equation \eqref{eq0916_3} with time-dependent boundaries $\Gamma_0(t)$ and $\Gamma_1(t)$ can be solved using the KFBI method, a type of boundary integral method that evaluates integral potentials without the need to explicitly construct singular Green’s functions. By exploiting fast algorithms such as the Fast Fourier Transform (FFT), the KFBI method efficiently computes the nutrient field on the Cartesian grid while maintaining essentially second-order spatial accuracy \cite{ying2014kernel, xie2023fourth}. This makes it particularly suitable for interface problems with geometrically complex and evolving domains, without requiring body-fitted meshes. The pressure obstacle problem \eqref{eq0916_1} is discretized on the same Cartesian grid and solved via the augmented Lagrangian active set method  \cite{karkkainen2003augmented}, known for its robustness and fast convergence. Formulated on the same grid, these two solvers are naturally coupled, yielding an efficient and flexible framework for simulating the evolution of the tumor free boundary.

\subsubsection{Implementation of Numerical Algorithm}
The numerical procedure employed to simulate the free boundary model governed by equations~\eqref{eq0916_1}--\eqref{eq0916_3} is now presented in detail, including the discretization strategy and the boundary evolution algorithm. 

\paragraph{Initialization}
\begin{itemize}
    \item Prescribe the model parameters $G_0>0$, $\lambda>0$, $c_B>0$, $0 < \bar{c} < c_B$, and $0 < n_c < 1$.
    \item Specify the initial inner boundary $\Gamma_0(0)$ of the tumor by a set of control points $\{\mathbf{X}_{0, k}^0\}_{k=1}^N$ arranged in counterclockwise order.
    \item Specify the initial outside boundary $\Gamma_1(0)$ of the tumor by a set of control points $\{\mathbf{X}_{1, k}^0\}_{k=1}^N$ arranged in counterclockwise order.
    \item Choose a time step size $\Delta t>0$.
    \item Define a rectangular computational domain $\mathcal{B}$ that fully contains $D(0)$, and discretize $\mathcal{B}$ with a uniform Cartesian grid of $(I+1)\times (J+1)$ nodes.
\end{itemize}

\paragraph{Time-stepping procedure}
Let $t^n = n\Delta t$. Suppose that the inner and outer boundaries at time $t^n$ are represented by the control points
$\{\mathbf{X}_{0,k}^n\}_{k=1}^N$ and $\{\mathbf{X}_{1,k}^n\}_{k=1}^N$, respectively.
Each time step consists of the following components.

\emph{Step 1: Initial Field Evaluation.}
Given the boundaries $\Gamma_0(t^n)$ and $\Gamma_1(t^n)$ at time $t^n$, the nutrient equation \eqref{eq0916_3} is first solved using the KFBI method in \ref{More Challenging Interface Problems of the Modified Helmholtz Type}, yielding $c^{n,*}$.  
The resulting nutrient field is then substituted into the obstacle problem \eqref{eq0916_1}, which is solved by the PDAS algorithm (Solver~\ref{alg0914} in \ref{Obstacle Problems}) to obtain $p^{n,*}$.

\emph{Step 2: Inner Boundary Predictor.}
An intermediate inner boundary is extracted from the zero-level set of the pressure,
\[
N(t^n)^* := \{ x \in D(t^n) \mid p^{n,*}(x)=0 \},
\]
from which a new set of control points $\{\mathbf{X}_{0,k}^{n,*}\}_{k=1}^N$ is obtained.

\emph{Step 3: Intermediate Field Update.}
Using the predicted inner boundary $\{\mathbf{X}_{0,k}^{n,*}\}_{k=1}^N$ together with the original outer boundary $\{\mathbf{X}_{1,k}^n\}_{k=1}^N$, 
the nutrient equation and the obstacle problem are solved again to obtain $c^{n,**}$ and $p^{n,**}$.

\emph{Step 4: Outer Boundary Advection.}
The outer boundary is advanced explicitly according to the normal velocity law.
At each control point,
\[
v_k^n = -\nabla p^{n,**}(\mathbf{X}_{1,k}^n)\cdot \mathbf{n}_k^n,
\]
and the control points are updated by
\[
\mathbf{X}_{1,k}^{n+1} = \mathbf{X}_{1,k}^n + \Delta t\, v_k^n \mathbf{n}_k^n.
\]
A smooth closed curve is then reconstructed using periodic cubic spline interpolation, and the control points are redistributed uniformly with respect to arc length.

\emph{Step 5: Inner Boundary Corrector.}
With the updated outer boundary $\{\mathbf{X}_{1,k}^{n+1}\}_{k=1}^N$ and the predicted inner boundary $\{\mathbf{X}_{0,k}^{n,*}\}_{k=1}^N$ , 
the nutrient equation and obstacle problem are solved once more to obtain $p^{n,***}$.
A corrected inner boundary $\{\mathbf{X}_{0,k}^{n,***}\}_{k=1}^N$ is extracted from the zero-level set
\[
N(t^{n+1})^{***} := \{ x \in \Omega(t^{n+1}) \mid p^{n,***}(x)=0 \}.
\]
The inner boundary is then updated via averaging:
\[
\mathbf{X}_{0,k}^{n+1}
= \frac{1}{2}\mathbf{X}_{0,k}^{n,*}
+ \frac{1}{2}\mathbf{X}_{0,k}^{n,***}, 
\qquad k=1,\dots,N.
\]

\begin{remark}[Level Set Reconstruction] \label{rmk3}
To reconstruct the inner boundary $\Gamma_0$ from the discrete pressure field, we identify grid points adjacent to the zero-level set. To ensure smoothness and filter grid noise, these points are sorted angularly and approximated using a truncated Fourier series. The truncation order is selected adaptively to minimize the maximum pointwise distance between the grid points and the reconstructed curve. This Fourier smoothing is crucial for maintaining the geometric regularity of the free boundary throughout the simulation.
\end{remark}

\begin{remark}[Handling Necrotic Core Nucleation]
The algorithm described above assumes the necrotic core $N(t)$ is already formed and resolvable. The transient phase where the core first emerges (a topological change from $N(t)=\emptyset$ to $N(t)\neq\emptyset$) requires a specialized heuristic treatment to detect the onset of necrosis and initialize the inner boundary. We detail this practical strategy in Section \ref{Numerical Observation of Necrotic Core Emergence} in the context of the relevant numerical experiment.
\end{remark}

This coupled iterative procedure yields a robust evolution of the tumor boundaries. By updating the inner boundary via a predictor-corrector averaging strategy rather than a single snapshot, the scheme effectively suppresses numerical oscillations that arise from the implicit level-set definition. The critical importance of this stabilizing strategy is demonstrated in Section \ref{Numerical Results for the in vitro Model with Necrotic Core}, particularly in Example 4, where it is shown to prevent the radius instabilities observed with standard forward Euler updates.

\section{Numerical Results} \label{Numerical Results}
This section presents numerical experiments designed to validate the proposed numerical methods and to explore the qualitative behavior of the corresponding free boundary models. 
We first verify the accuracy and reliability of the numerical scheme by considering radially symmetric configurations, for which the evolution of the tumor boundary and, when present, the necrotic core boundary can be derived analytically. 
The numerical results are compared against these analytical solutions, providing quantitative evidence for the correctness and effectiveness of the proposed methods.

After validating the scheme in the radially symmetric setting, we further investigate more general geometries by considering perturbed circular initial boundaries. 
These experiments are used to examine the stability of the numerical method and to observe the evolution of nontrivial tumor shapes beyond the idealized symmetric case.

In addition to simulations without an initial necrotic core and with a prescribed initial necrotic core, we also highlight a numerical experiment that captures the entire process of necrotic core formation starting from a fully viable tumor. 
The emergence and subsequent evolution of the necrotic region are resolved dynamically by the numerical method; see Subsection~\ref{Numerical Observation of Necrotic Core Emergence} for detailed results.

\subsection{Numerical Results for the Tumor Model without Necrotic Core}
\label{Numerical Results for the in vitro Model without Necrotic Core}

In this section, we present several numerical experiments for the tumor growth model without a necrotic core, described in Section~\ref{The in vitro model}. 
The purpose of these experiments is threefold: 
(i) to validate the accuracy of the proposed numerical method by comparison with analytical solutions when available; 
(ii) to investigate the ability of the method to capture geometric evolution of tumor boundaries under different initial configurations; and 
(iii) to examine the influence of model parameters on the shape relaxation and long-time behavior of the tumor boundary.

\medskip
\noindent \textbf{Example 1 (Radially symmetric validation test).} 
This example is designed to validate the accuracy and convergence of the numerical method by comparing the numerical solution with an exact radially symmetric solution. 
We consider the model~\eqref{eq0915_1}–\eqref{eq0915_2} with an initially circular tumor boundary $\partial D(0)$ of radius $0.5$. 
The parameters are chosen as $c_B = 10$, $\lambda = 0.5$, and $G_0 = 0.1$.

For this configuration, it has been shown in~\cite{feng2023tumor} that the tumor boundary remains circular throughout the evolution, and its radius $R(t)$ satisfies the ordinary differential equation
\begin{equation}
\label{eq1020_1}
\dot{R} = \frac{G_0 c_B I_1(\sqrt{\lambda} R)}{\sqrt{\lambda} I_0(\sqrt{\lambda} R)},
\end{equation}
where $I_0$ and $I_1$ denote the modified Bessel functions of the first kind of orders $0$ and $1$, respectively. 
This provides a reliable benchmark for assessing the numerical accuracy of the proposed method.

The computational domain is taken as $\mathcal{B} = [-1.5, 1.5] \times [-1.5, 1.5]$, and the number of boundary control points is set to $N = 32$. 
Figure~\ref{fig:radial_evolution} illustrates the time evolution of the numerical tumor boundary, while quantitative comparisons of the numerical and exact radii at $T=0.2$ are reported in Table~\ref{tab1020_1}.

\begin{figure}[htbp]
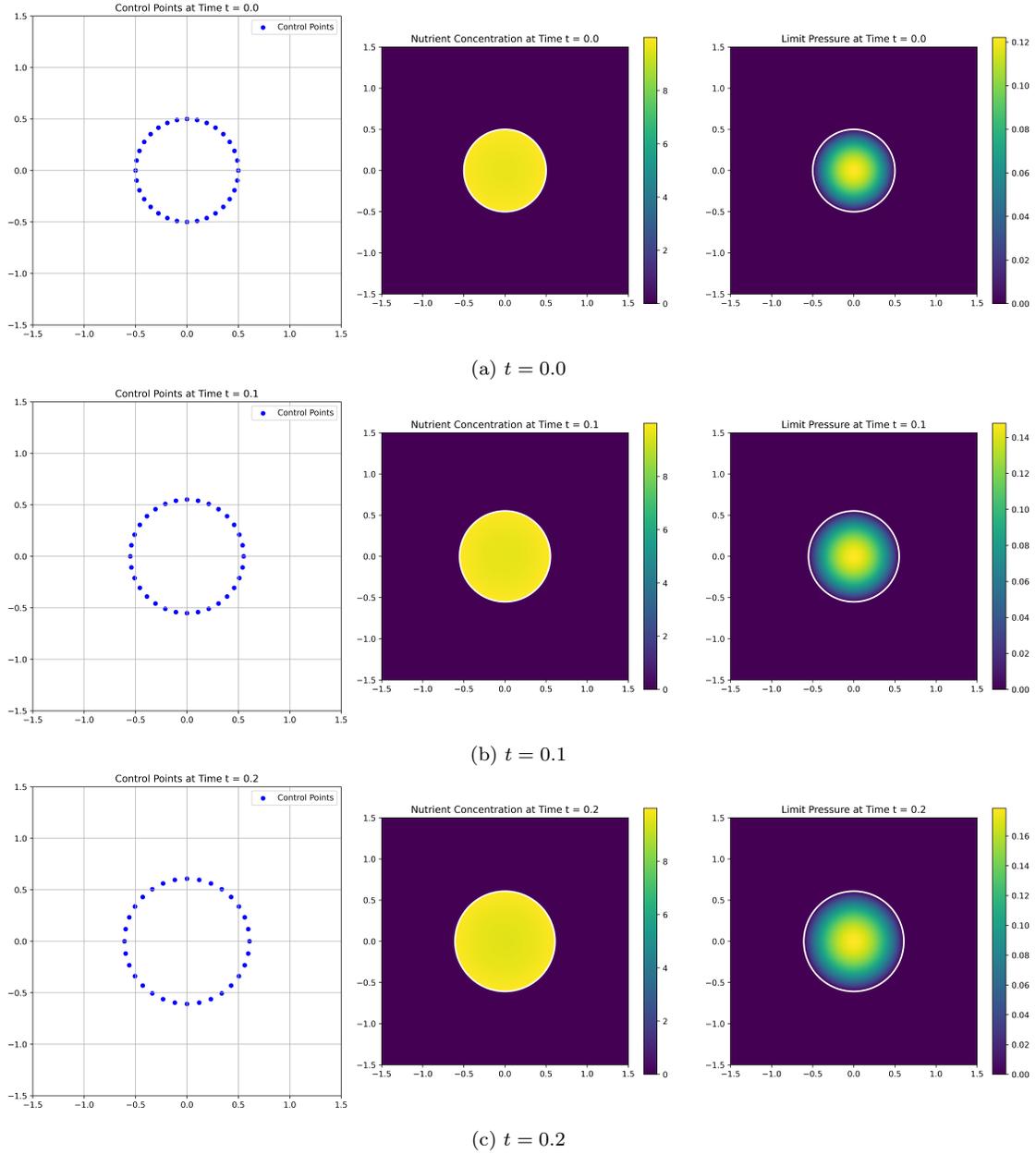

    \centering
    \begin{subfigure}{0.9\textwidth}
        \centering
        \includegraphics[width=\textwidth]{fig1020_1.png}
        \caption{$t=0.0$}
    \end{subfigure}
    \begin{subfigure}{0.9\textwidth}
        \centering
        \includegraphics[width=\textwidth]{fig1020_3.png}
        \caption{$t=0.1$}
    \end{subfigure}
    \begin{subfigure}{0.9\textwidth}
        \centering
        \includegraphics[width=\textwidth]{fig1020_5.png}
        \caption{$t=0.2$}
    \end{subfigure}
    \caption{Time evolution of the numerical solution for the radially symmetric test case. The computations are performed with $I=J=256$ and $\Delta t=0.01$.}
    \label{fig:radial_evolution}
\end{figure}

\begin{table}[ht]
\centering
\begin{tabular}{c|c|c|c|c|c} 
\hline
$I$ & $J$ & $\Delta t$ & $n_T$ & $R_{\text{numerical}}$ & error \\
\hline
64 & 64 & 0.04 & 5 & 0.60673039 & $1.696 \times 10^{-3}$ \\
128 & 128 & 0.02 & 10 & 0.60756377 & $8.629 \times 10^{-4}$ \\
256 & 256 & 0.01 & 20 & 0.60798085 & $4.438 \times 10^{-4}$ \\
512 & 512 & 0.005 & 40 & 0.60819969 & $2.270 \times 10^{-4}$ \\
\hline
\end{tabular}
\caption{Numerical results and radius errors at $T = 0.2$. $I$ and $J$ denote the discretization parameters of the Cartesian grid, with grid spacings $h_x = \frac{x_\text{max}-x_\text{min}}{I}$ and $h_y = \frac{y_\text{max}-y_\text{min}}{J}$, $n_T$ is the number of time steps to reach $T=0.2$. $R_{\text{exact}}$ is obtained by solving the ODE \eqref{eq1020_1} using Python's \texttt{solve\_ivp} function with \texttt{method=RK45'} from \texttt{scipy.integrate}, giving $R_{\text{exact}}(T)=0.60842668$. $R_{\text{numerical}}$ is computed by first performing periodic cubic spline interpolation of the boundary control points from the numerical solution, then parameterizing the boundary, computing the arc length, and dividing by $2\pi$. In the last column, error $= |R_{\text{exact}} - R_{\text{numerical}}|$.}
\label{tab1020_1}
\end{table}

From Figure~\ref{fig:radial_evolution}, the numerical tumor boundary remains circular and expands monotonically in time, in agreement with the analytical prediction. 
Table~\ref{tab1020_1} shows that the numerical radius converges to the exact solution as the spatial and temporal resolutions are refined, demonstrating both the accuracy and stability of the proposed numerical method.

\medskip
\noindent \textbf{Example 2 (Relaxation of non-circular initial shapes).} 
This example aims to investigate the capability of the numerical method to capture the shape relaxation of tumor boundaries starting from a non-circular geometry, as well as the influence of the parameter $\lambda$ on this process. 
The initial tumor boundary is chosen as an ellipse,
\[
\partial D(0) = \left\{(x, y): \frac{x^2}{a^2} + \frac{y^2}{b^2} = 1 \right\},
\]
with semi-axes $a = 2.3$ and $b = 1.1$. 
The parameters are set to $c_B = 10$ and $G_0 = 1$. 
The computational domain is $\mathcal{B} = [-10, 10] \times [-10, 10]$, with $I = J = 256$, $\Delta t = 0.01$, and $N = 32$ control points.

When $\lambda = 1$, Figure~\ref{fig1020_6} shows that the initially elliptical tumor boundary gradually evolves toward a circular shape. In contrast, for a larger value $\lambda = 50$, the boundary evolution becomes noticeably more rigid, and the relaxation toward a circular shape is significantly slowed down, as shown in Figure~\ref{fig1020_7}. These results indicate that larger values of $\lambda$ inhibit shape deformation and reduce the flexibility of the tumor boundary.

\begin{figure}[htbp]
    \centering
    \begin{minipage}[t]{0.48\textwidth}
        \centering
        \includegraphics[width=\textwidth]{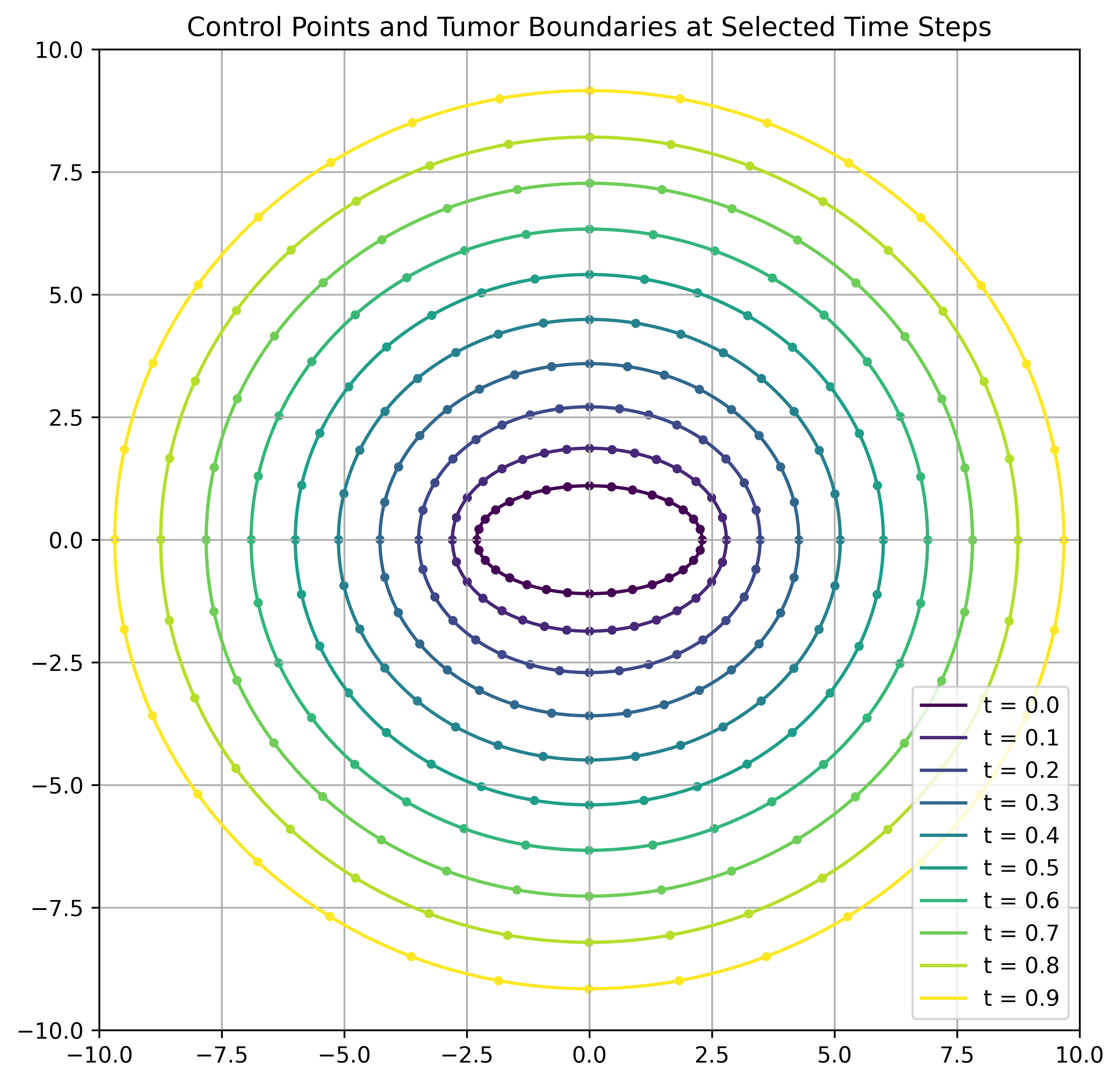}
        \caption{Evolution of the tumor boundary with $\lambda=1$.}
        \label{fig1020_6}
    \end{minipage}
    \hfill
    \begin{minipage}[t]{0.48\textwidth}
        \centering
        \includegraphics[width=\textwidth]{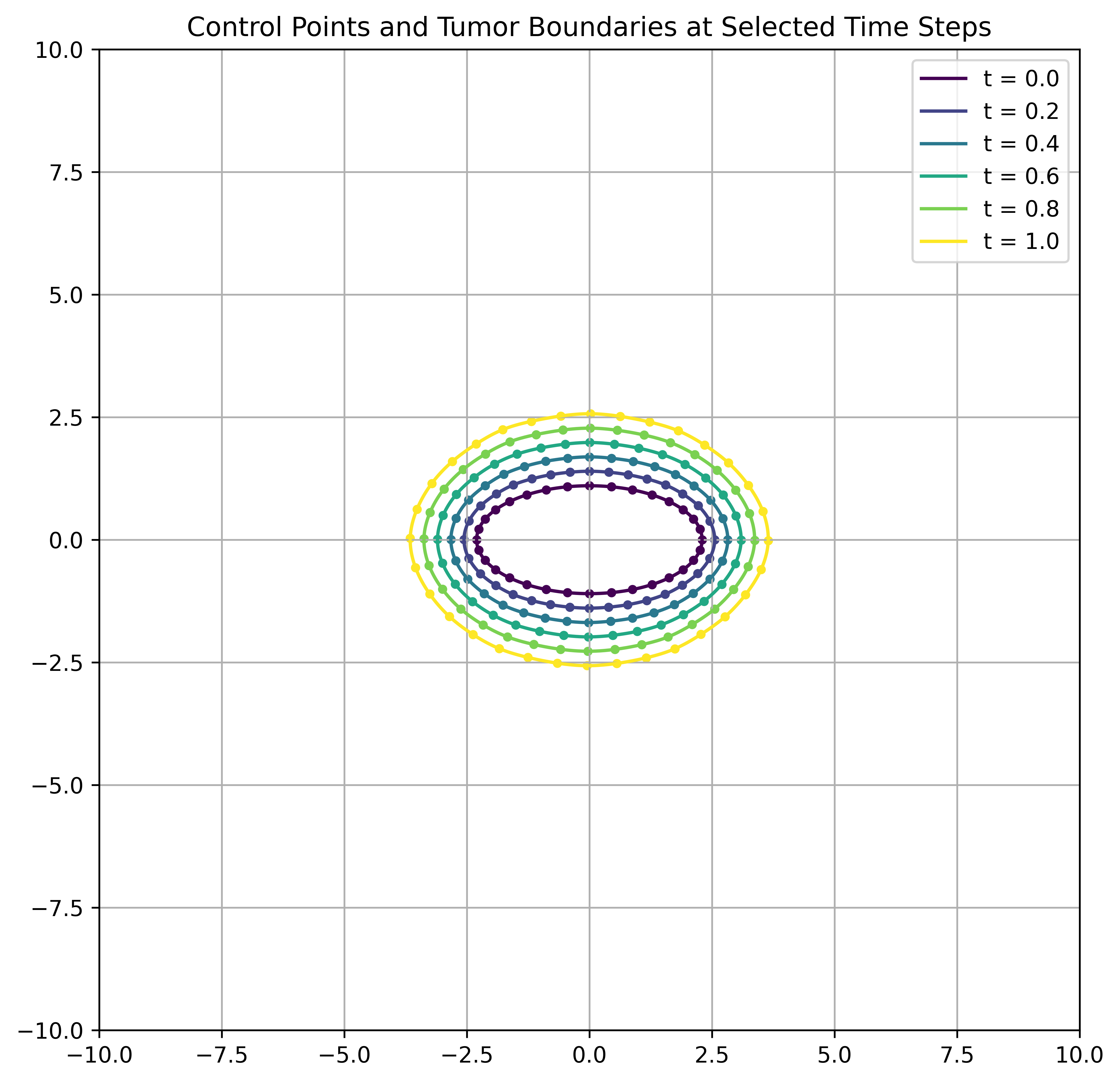}
        \caption{Evolution of the tumor boundary with $\lambda=50$.}
        \label{fig1020_7}
    \end{minipage}
\end{figure}

\medskip
\noindent \textbf{Example 3 (Stability under boundary perturbations).} 
This example examines the stability of the circular tumor shape under small perturbations of different modes. 
The initial boundary is taken as a perturbed circle,
\[
\partial D(0) = \{(x, y): x = r(\theta)\cos\theta,\ y = r(\theta)\sin\theta,\ r(\theta) = 0.8 + 0.02\cos(l\theta)\},
\]
where $l$ denotes the perturbation mode. 
The parameters are chosen as $c_B = 10$, $\lambda = 10$, and $G_0 = 0.2$. 
The computational domain is $\mathcal{B} = [-1.5, 1.5] \times [-1.5, 1.5]$, with $I = J = 256$, $\Delta t = 0.01$, and $N = 128$ control points.

Figures~\ref{fig1022_1}–\ref{fig1022_4} present the evolution of the tumor boundary for perturbation modes $l = 6, 8, 10$, and $12$, respectively. 
In all cases, the initially perturbed boundary gradually smooths out and converges to a larger circular shape as time evolves. 
This indicates that the circular tumor configuration is dynamically stable under small perturbations, and the numerical results are consistent with the analytical and numerical observations reported in~\cite{feng2023tumor}.

\begin{figure}[htbp]
    \centering
    \begin{minipage}[t]{0.48\textwidth}
        \centering
        \includegraphics[width=\textwidth]{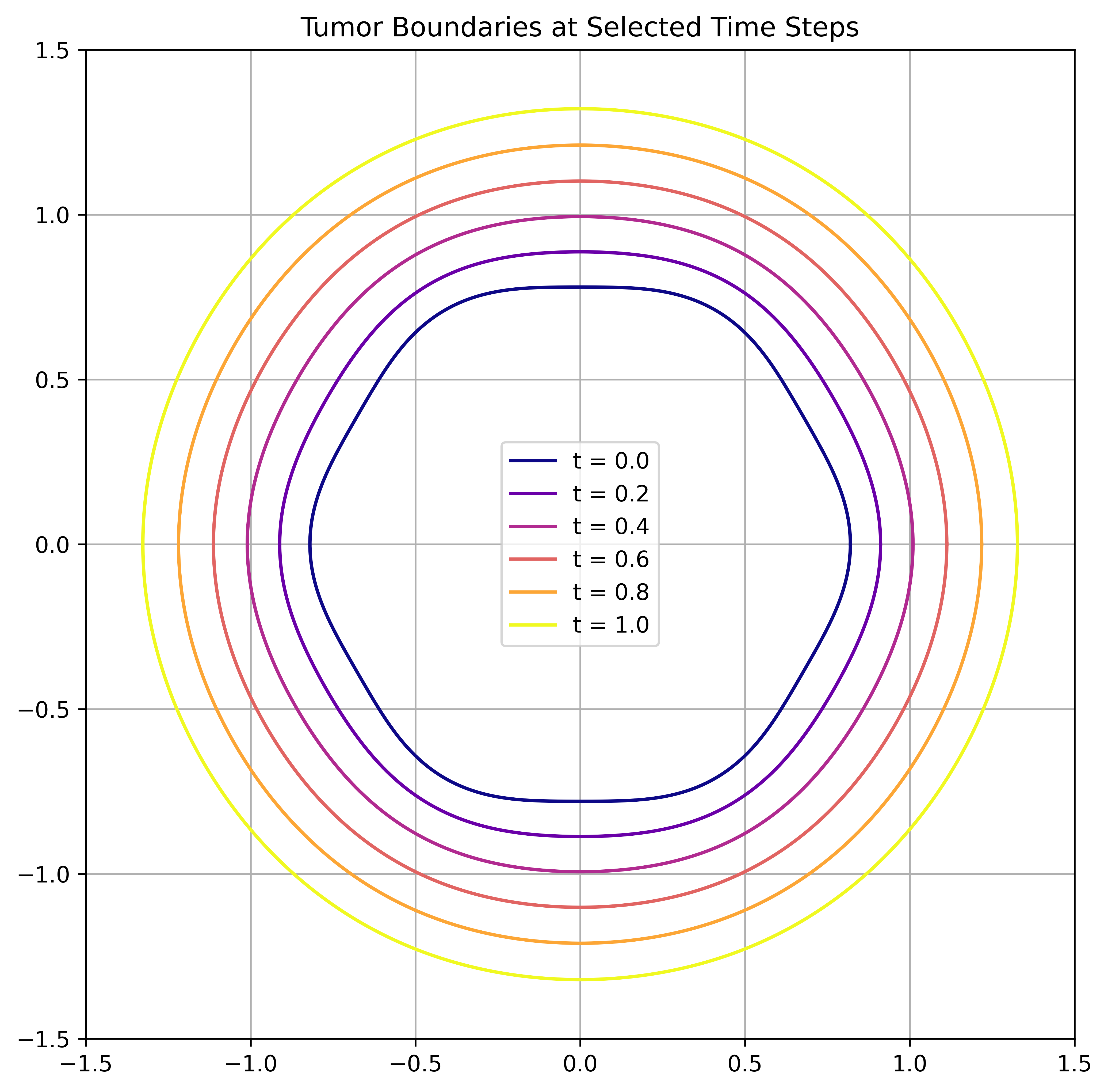}
        \caption{Evolution of the tumor boundary with $l=6$.}
        \label{fig1022_1}
    \end{minipage}
    \hfill
    \begin{minipage}[t]{0.48\textwidth}
        \centering
        \includegraphics[width=\textwidth]{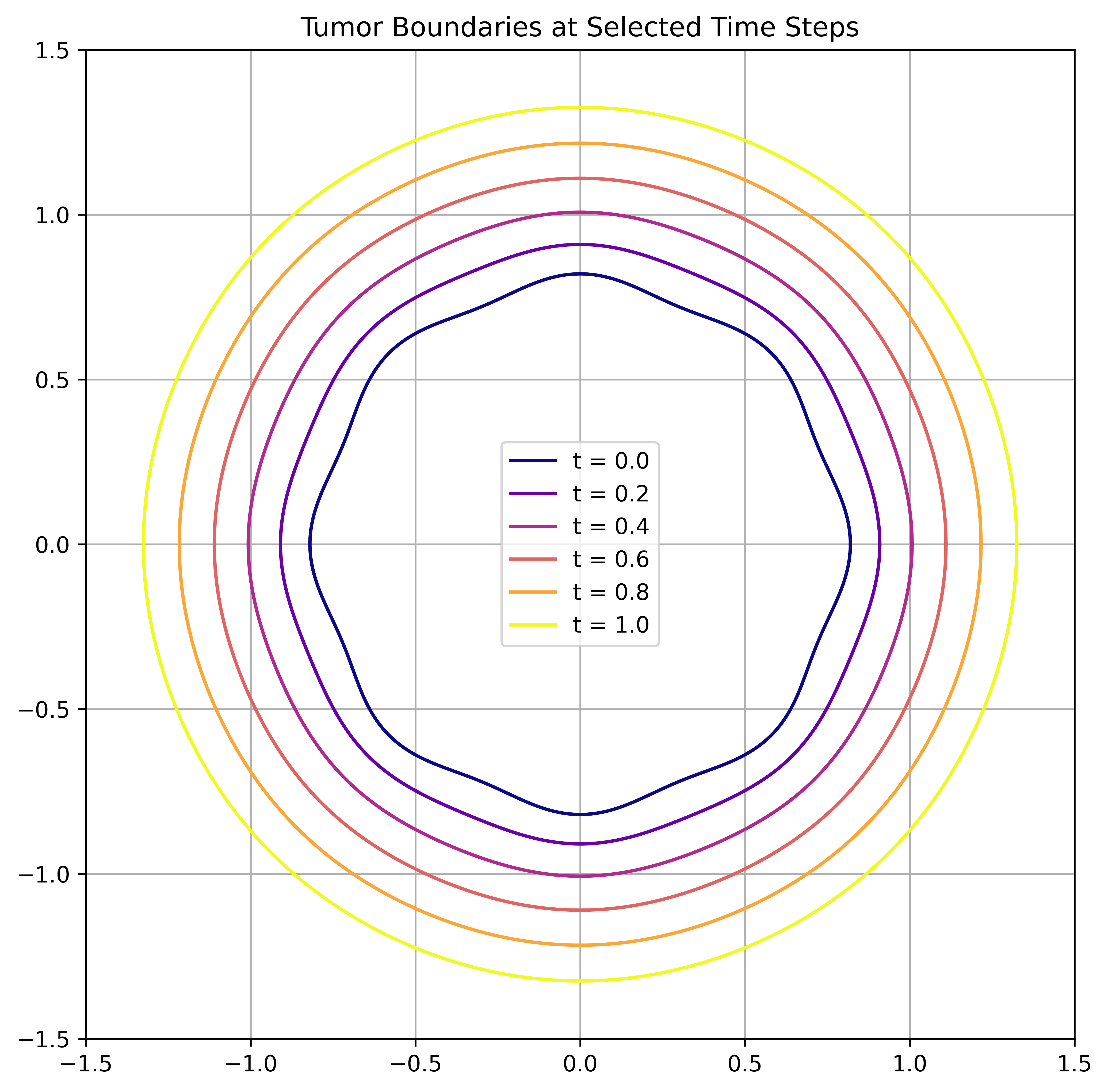}
        \caption{Evolution of the tumor boundary with $l=8$.}
        \label{fig1022_2}
    \end{minipage}
\end{figure}
\begin{figure}[htbp]
    \centering
    \begin{minipage}[t]{0.48\textwidth}
        \centering
        \includegraphics[width=\textwidth]{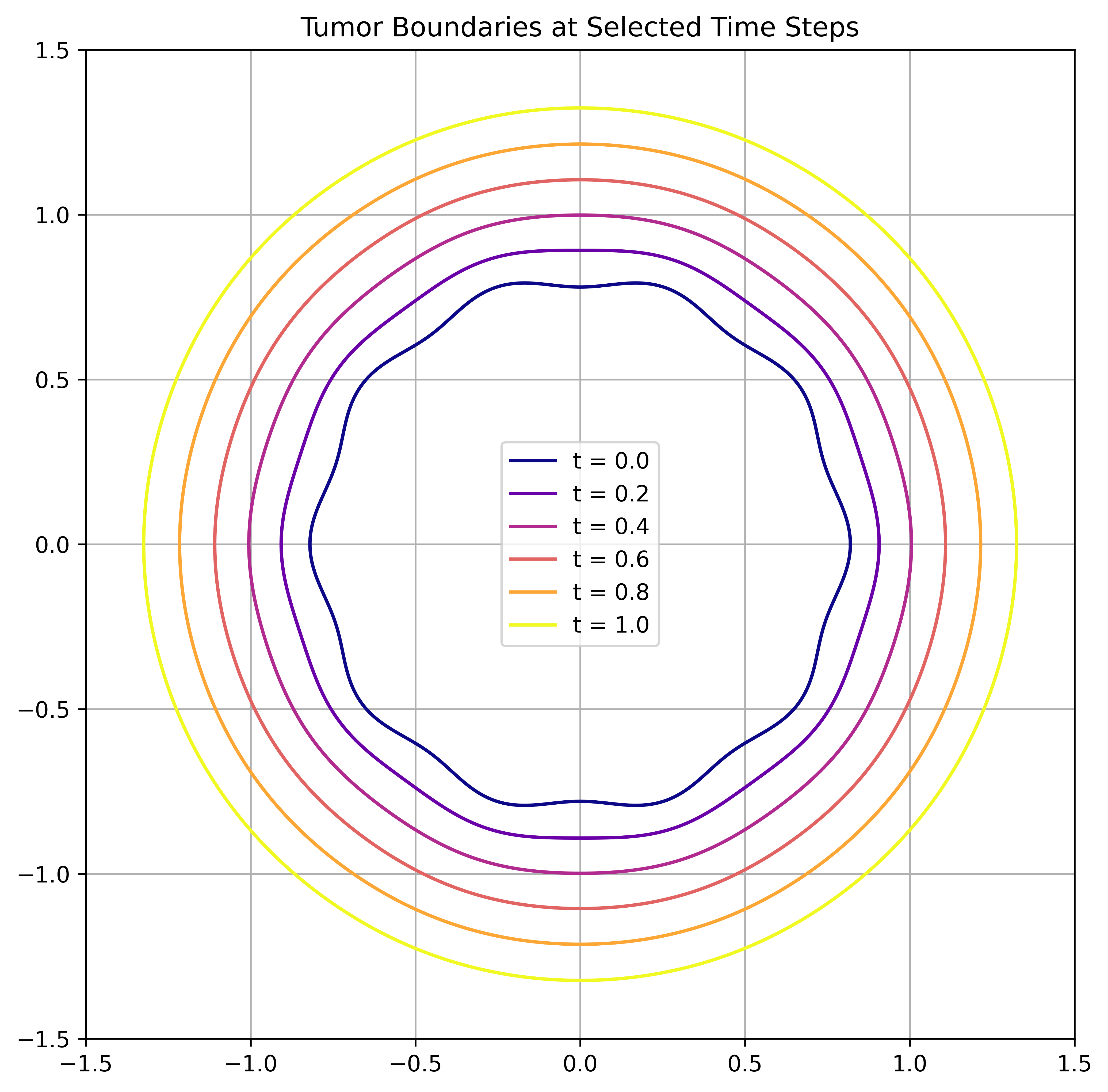}
        \caption{Evolution of the tumor boundary with $l=10$.}
        \label{fig1022_3}
    \end{minipage}
    \hfill
    \begin{minipage}[t]{0.48\textwidth}
        \centering
        \includegraphics[width=\textwidth]{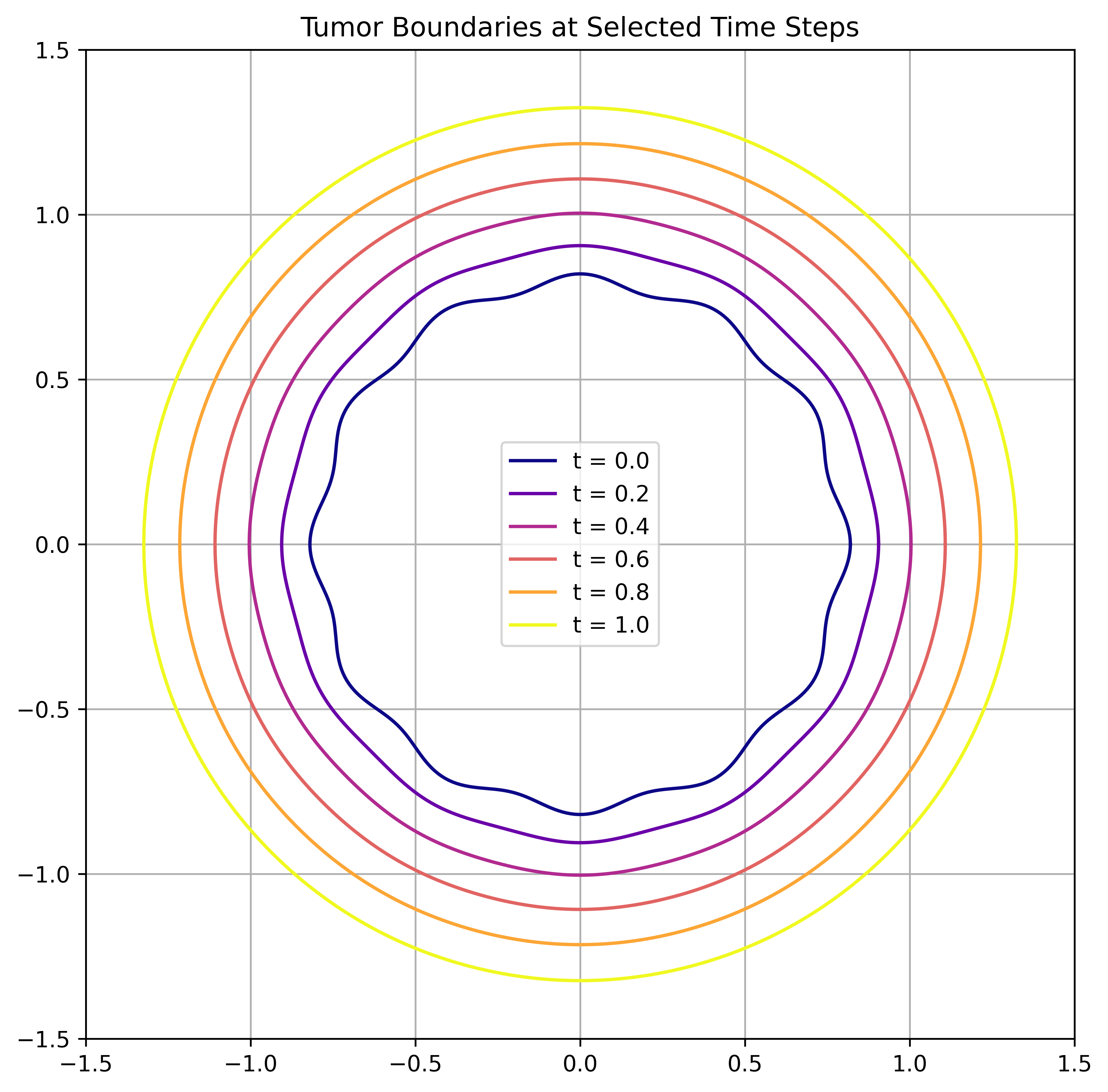}
        \caption{Evolution of the tumor boundary with $l=12$.}
        \label{fig1022_4}
    \end{minipage}
\end{figure}

\subsection{Numerical Results for the Tumor Model with Necrotic Core}
\label{Numerical Results for the in vitro Model with Necrotic Core}
In this section, we investigate the numerical performance of the proposed method for the tumor growth model with a necrotic core, introduced in Section~\ref{The in vitro Model with Necrotic Core}. 
Compared with the model without necrosis, the presence of two moving interfaces significantly increases the numerical complexity due to the strong coupling between the pressure field and the inner and outer tumor boundaries. 
The main goals of the following experiments are: 
(i) to validate the numerical accuracy of the proposed scheme using radially symmetric configurations with known analytical descriptions; and 
(ii) to demonstrate the necessity and effectiveness of the improved coupling strategy for stabilizing the interface evolution.

\medskip
\noindent \textbf{Example 4 (Radially symmetric benchmark with necrotic core).} 
This example is designed as a benchmark test to assess the accuracy and stability of the numerical method in the presence of a necrotic core. 
We consider the model~\eqref{eq0916_1}–\eqref{eq0916_3} with initially circular inner and outer tumor boundaries. 
As shown in \ref{sec: After formation of the necrotic core}, the radial symmetry is preserved throughout the evolution, allowing the interface dynamics to be characterized exactly by a coupled system of ordinary differential equations. Let $R_0(t)$ and $R_1(t)$ denote the radii of the inner boundary $\Gamma_0$ and outer boundary $\Gamma_1$, respectively, with initial values $R_0^{(0)}$ and $R_1^{(0)}$. 
Then the evolution of the outer radius $R_1(t)$ is governed by
\begin{equation}
\label{eq1024_1}
\frac{d R_1(t)}{dt} = G_0 \left[\frac{1}{\sqrt{\lambda}} \big(b_0(t) I_1(\sqrt{\lambda} R_1(t)) - b_1(t) K_1(\sqrt{\lambda} R_1(t))\big) - \frac{R_1(t)}{2}\bar{c} - \frac{A(t)}{R_1(t)} \right],
\end{equation}
which is coupled with the transcendental relation
\begin{equation}
\label{eq1024_2}
\frac{(R_1(t))^2}{4}\bar{c} + A(t)\ln R_1(t) + B(t)
- \frac{1}{\lambda} \big(b_0(t) I_0(\sqrt{\lambda} R_1(t)) + b_1(t) K_0(\sqrt{\lambda} R_1(t))\big) = 0.
\end{equation}
where the coefficients $b_0(t)$, $b_1(t)$, $A(t)$, and $B(t)$ depend on $R_0(t)$ and $R_1(t)$ as follows: \begin{equation} \label{eq1024_3} b_0(t) = \frac{c_B}{I_0(\sqrt{\lambda} R_1(t)) + K_0(\sqrt{\lambda} R_1(t)) \frac{I_0(\sqrt{\lambda n_c} R_0(t)) I_1(\sqrt{\lambda} R_0(t)) - \sqrt{n_c}\, I_1(\sqrt{\lambda n_c} R_0(t)) I_0(\sqrt{\lambda} R_0(t))}{ I_0(\sqrt{\lambda n_c} R_0(t)) K_1(\sqrt{\lambda} R_0(t)) + \sqrt{n_c}\, I_1(\sqrt{\lambda n_c} R_0(t)) K_0(\sqrt{\lambda} R_0(t))}}, \end{equation} \begin{equation} \label{eq1024_4} b_1(t) = b_0(t) \frac{I_0(\sqrt{\lambda n_c} R_0(t)) I_1(\sqrt{\lambda} R_0(t)) - \sqrt{n_c}\, I_1(\sqrt{\lambda n_c} R_0(t)) I_0(\sqrt{\lambda} R_0(t))}{ I_0(\sqrt{\lambda n_c} R_0(t)) K_1(\sqrt{\lambda} R_0(t)) + \sqrt{n_c}\, I_1(\sqrt{\lambda n_c} R_0(t)) K_0(\sqrt{\lambda} R_0(t))}, \end{equation} \begin{equation} \label{eq1024_5} A(t) = \frac{R_0(t)}{\sqrt{\lambda}} \big(b_0(t) I_1(\sqrt{\lambda} R_0(t)) - b_1(t) K_1(\sqrt{\lambda} R_0(t))\big) - \frac{(R_0(t))^2}{2}\bar{c}, \end{equation} \begin{equation} \label{eq1024_6} B(t) = \frac{1}{\lambda} \big(b_0(t) I_0(\sqrt{\lambda} R_0(t)) + b_1(t) K_0(\sqrt{\lambda} R_0(t))\big) - \frac{(R_0(t))^2}{2}\bar{c} - A(t)\ln R_0(t). \end{equation}

The parameters are chosen as
$c_B = 10$, $\lambda = 1$, $G_0 = 1$, $n_c = 10^{-3}$, and $\bar{c} = \tfrac{1}{2}c_B$.
The initial outer radius is $R_1^{(0)} = 2.5$, and the corresponding initial inner radius $R_0^{(0)} = 0.5751983378$ is obtained from~\eqref{eq1024_2}. 
The computational domain is $\mathcal{B} = [-5, 5] \times [-5, 5]$, and the numbers of control points on both inner and outer boundaries are set to $64$.

Before presenting the results obtained by the proposed algorithm, let us firstly examine the results produced by a forward Euler scheme similar to that described in Section \ref{The in vitro model}. Specifically, at each time step, we separately solve the interface PDE for $c$ and the obstacle problem for $p$. The inner boundary is then updated according to the zero-level set of $p$, while the outer boundary evolves based on the normal derivative of $p$ along the outer interface, using the forward Euler method. Although this approach appears natural, it fails to yield satisfactory numerical results. In particular, while both the inner and outer boundaries preserve their circular shapes, the evolution of their radii exhibits significant instability, as illustrated in Figures~\ref{fig1024_1}–\ref{fig1024_4}. This instability persists under different choices of the discrete parameters $I$, $J$, and $\Delta t$.

\begin{figure}[htbp]
    \centering
    \begin{subfigure}[t]{0.48\textwidth}
        \centering
        \includegraphics[width=\textwidth]{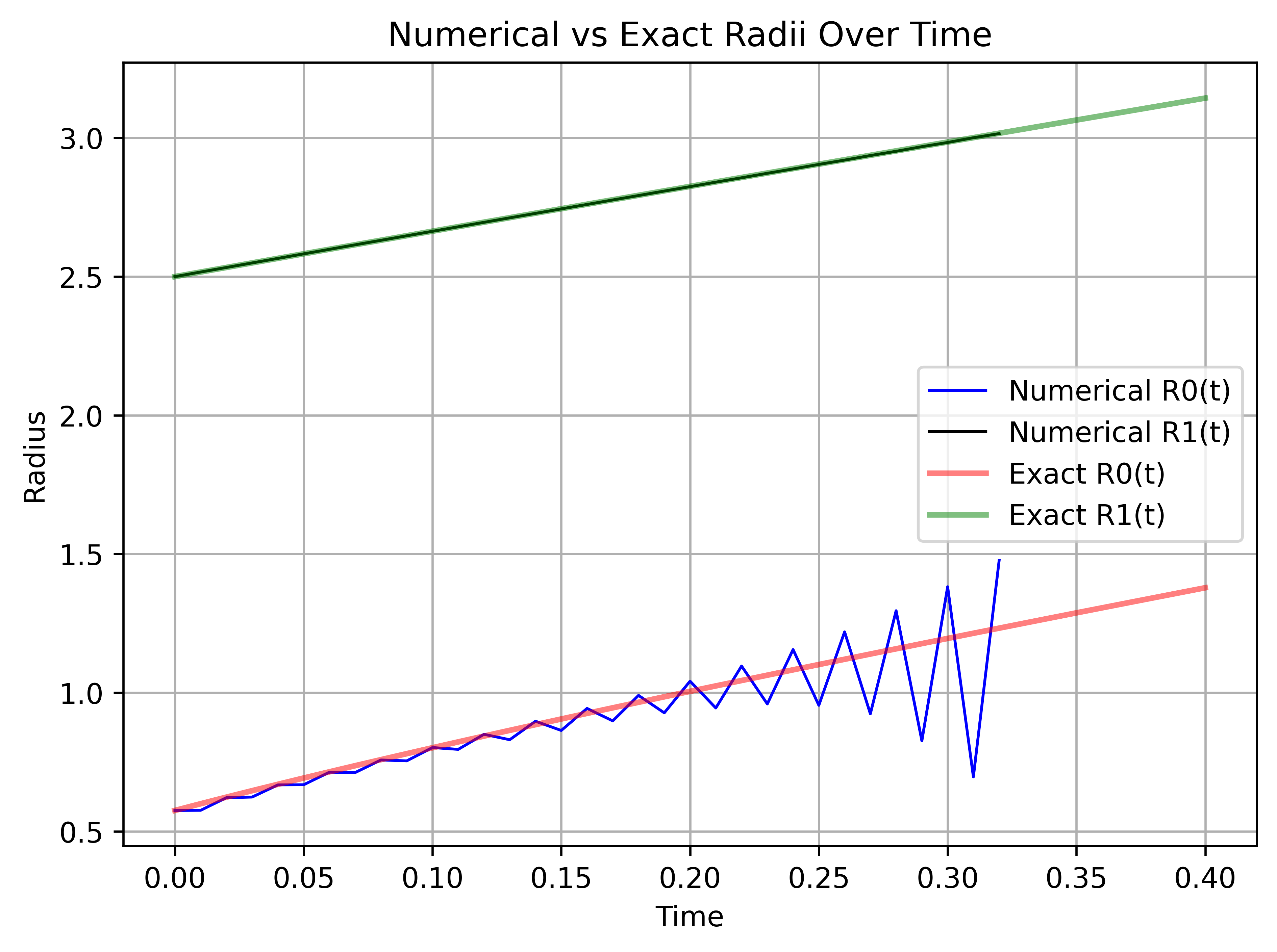}
        \caption{$I = J = 256$, $\Delta t = 0.01$}
        \label{fig1024_1}
    \end{subfigure}
    \hfill
    \begin{subfigure}[t]{0.48\textwidth}
        \centering
        \includegraphics[width=\textwidth]{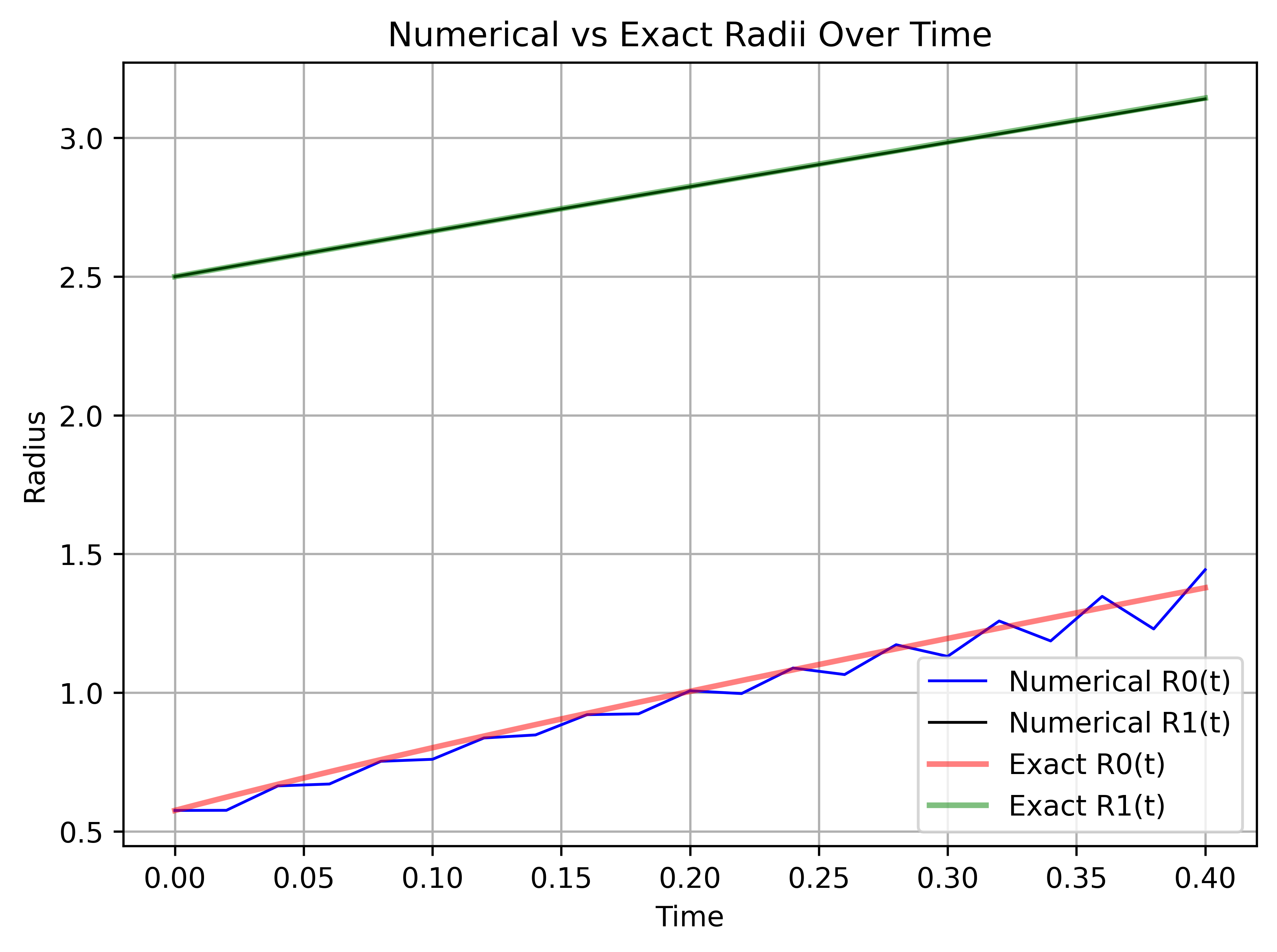}
        \caption{$I = J = 256$, $\Delta t = 0.02$}
        \label{fig1024_2}
    \end{subfigure}
    \\[1em]
    \begin{subfigure}[t]{0.48\textwidth}
        \centering
        \includegraphics[width=\textwidth]{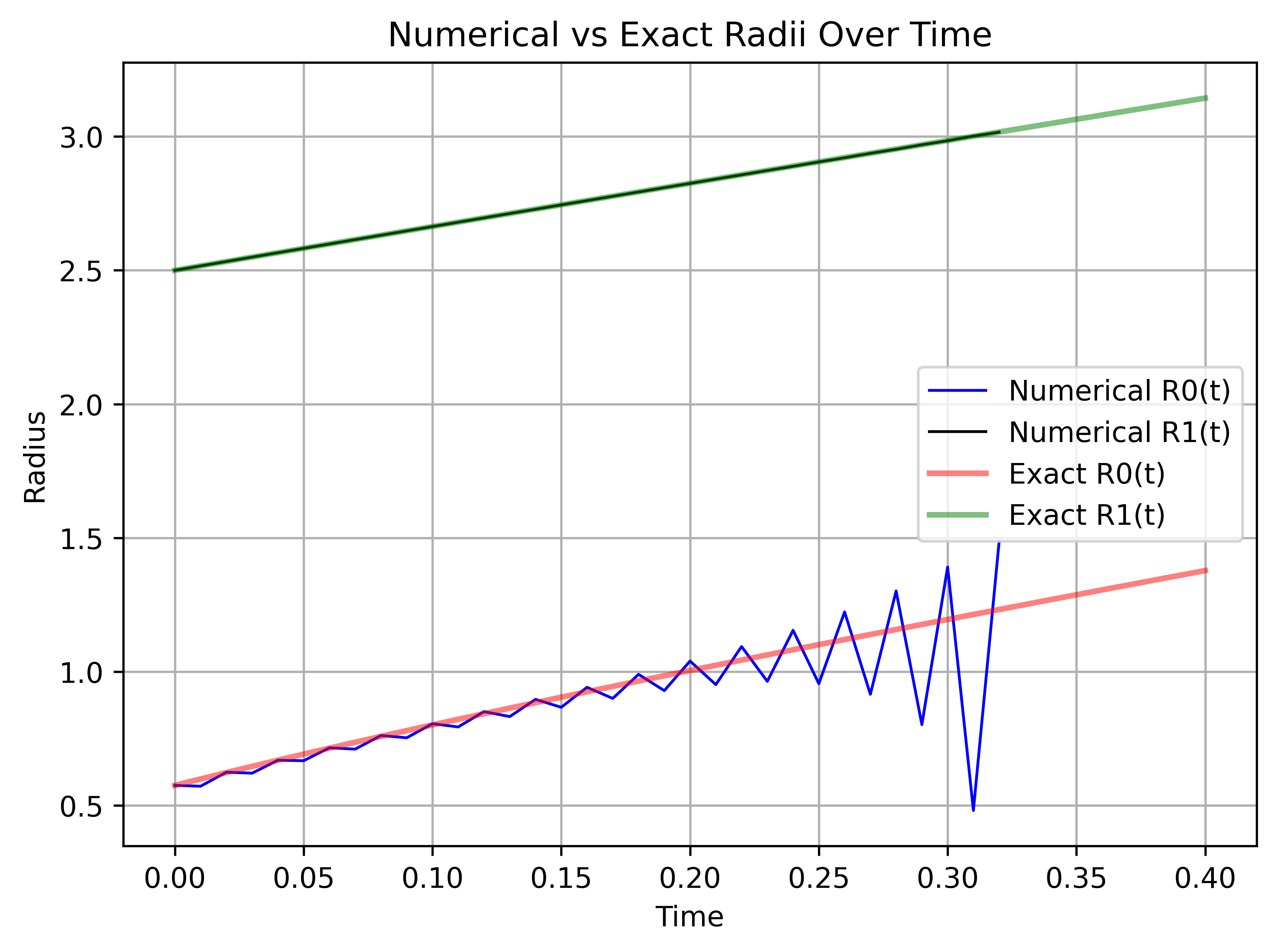}
        \caption{$I = J = 512$, $\Delta t = 0.01$}
        \label{fig1024_3}
    \end{subfigure}
    \hfill
    \begin{subfigure}[t]{0.48\textwidth}
        \centering
        \includegraphics[width=\textwidth]{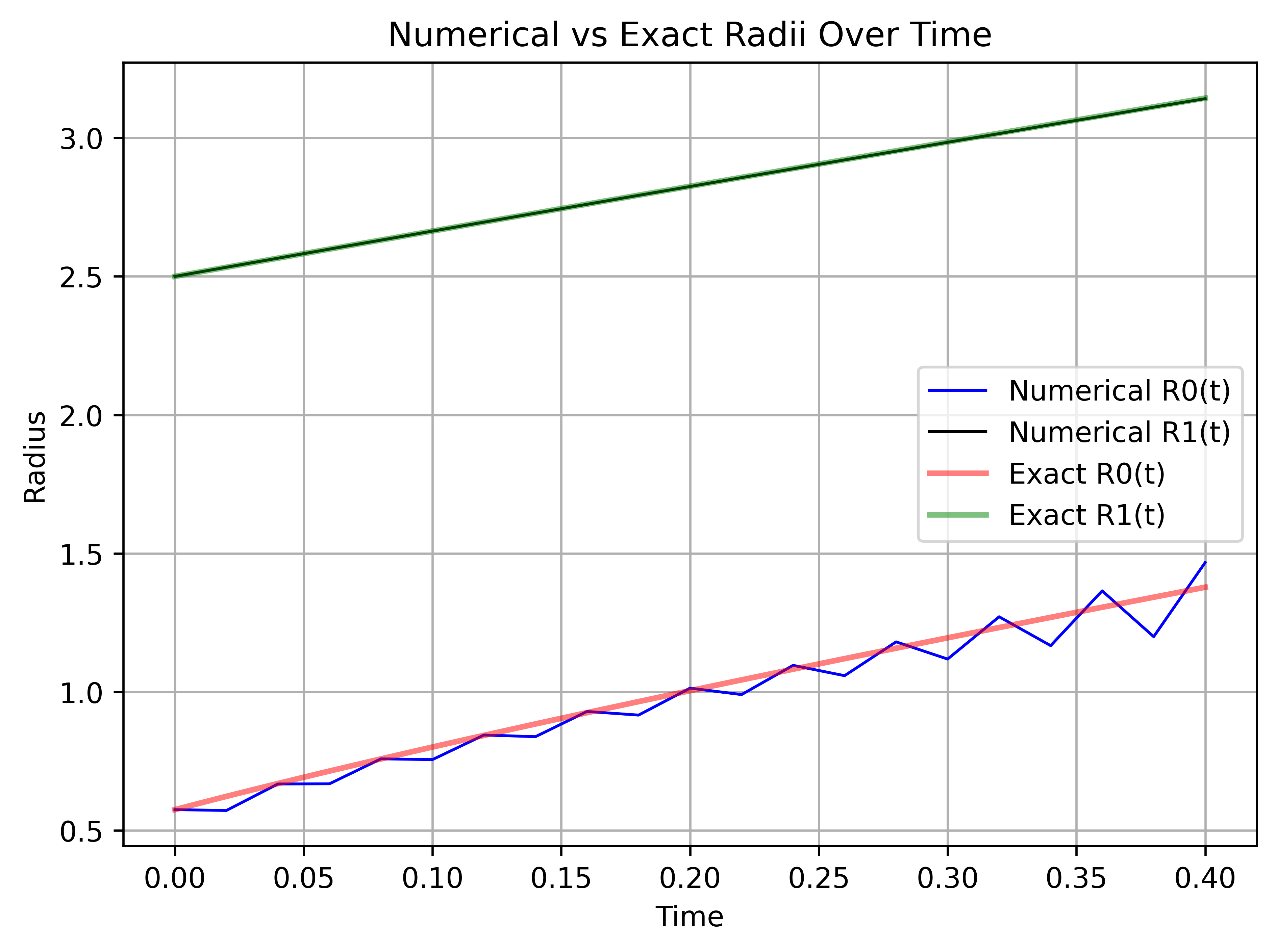}
        \caption{$I = J = 512$, $\Delta t = 0.02$}
        \label{fig1024_4}
    \end{subfigure}
    \caption{Comparison of numerical and exact tumor boundary radii obtained using a forward Euler interface update. The premature termination of some simulations indicates numerical instability caused by improper coupling between the pressure field and the moving interfaces.}
    \label{fig1024}
\end{figure}

In contrast, when the improved numerical scheme proposed in Section~\ref{The in vitro Model with Necrotic Core} is employed, the numerical behavior is significantly improved. 
Figures~\ref{fig1029_1}–\ref{fig1029_4} show excellent agreement between the numerical and exact radii of both interfaces for a wide range of spatial and temporal discretizations. 
The corresponding quantitative errors are summarized in Table~\ref{tab1029}, demonstrating clear convergence as the grid is refined. Moreover, the evolution of both the inner and outer boundaries becomes smooth and free of spurious oscillations, even for long-time simulations. 
Figure~\ref{fig1026} further illustrates that the proposed coupling strategy maintains the stability of the necrotic core and accurately captures the coupled dynamics of the two interfaces.

\begin{figure}[htbp]
    \centering
    \begin{subfigure}[t]{0.48\textwidth}
        \centering
        \includegraphics[width=\textwidth]{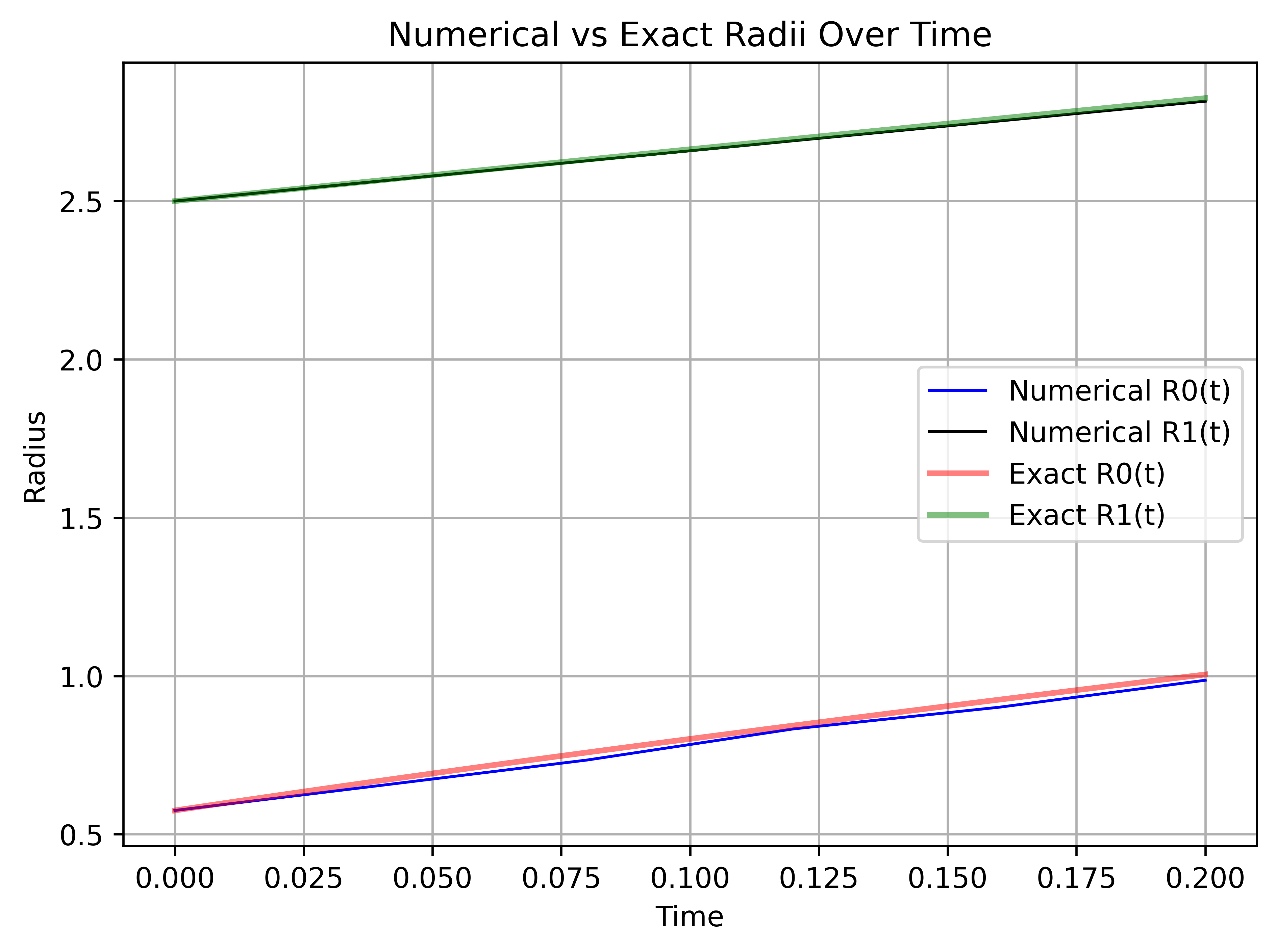}
        \caption{$I = J = 64$, $\Delta t = 0.04$}
        \label{fig1029_1}
    \end{subfigure}
    \hfill
    \begin{subfigure}[t]{0.48\textwidth}
        \centering
        \includegraphics[width=\textwidth]{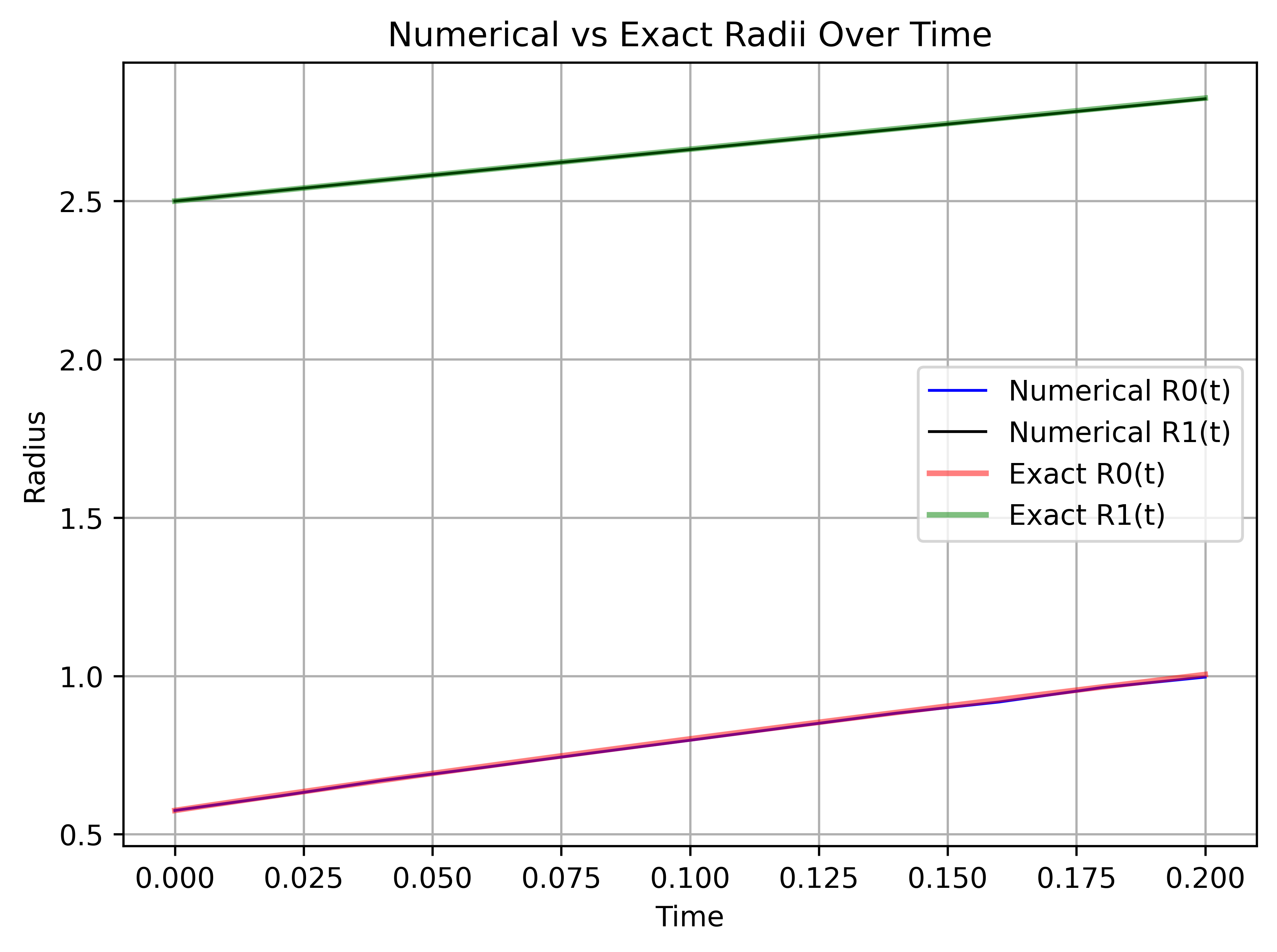}
        \caption{$I = J = 128$, $\Delta t = 0.02$}
        \label{fig1029_2}
    \end{subfigure}
    \\[1em]
    \begin{subfigure}[t]{0.48\textwidth}
        \centering
        \includegraphics[width=\textwidth]{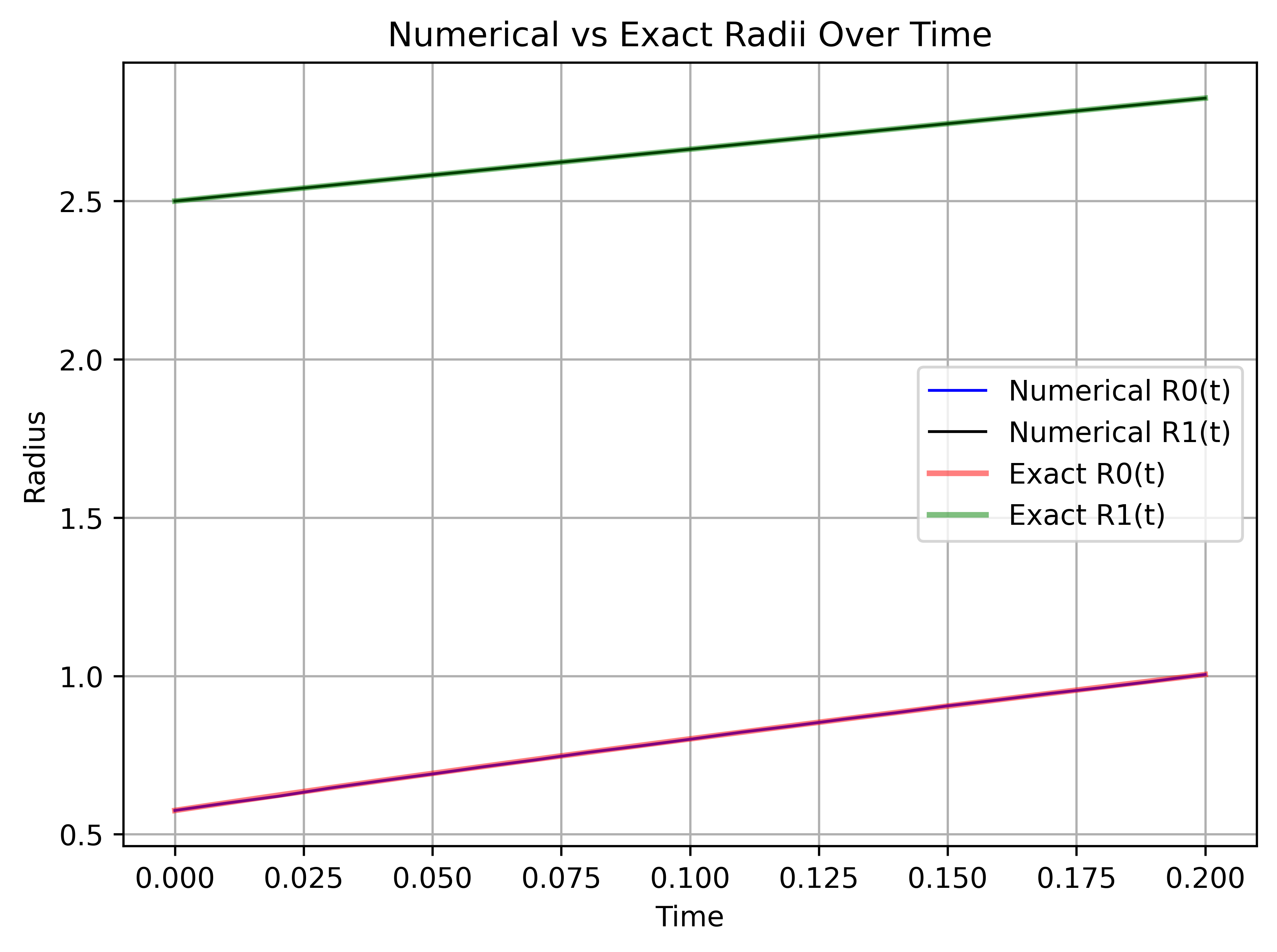}
        \caption{$I = J = 256$, $\Delta t = 0.01$}
        \label{fig1029_3}
    \end{subfigure}
    \hfill
    \begin{subfigure}[t]{0.48\textwidth}
        \centering
        \includegraphics[width=\textwidth]{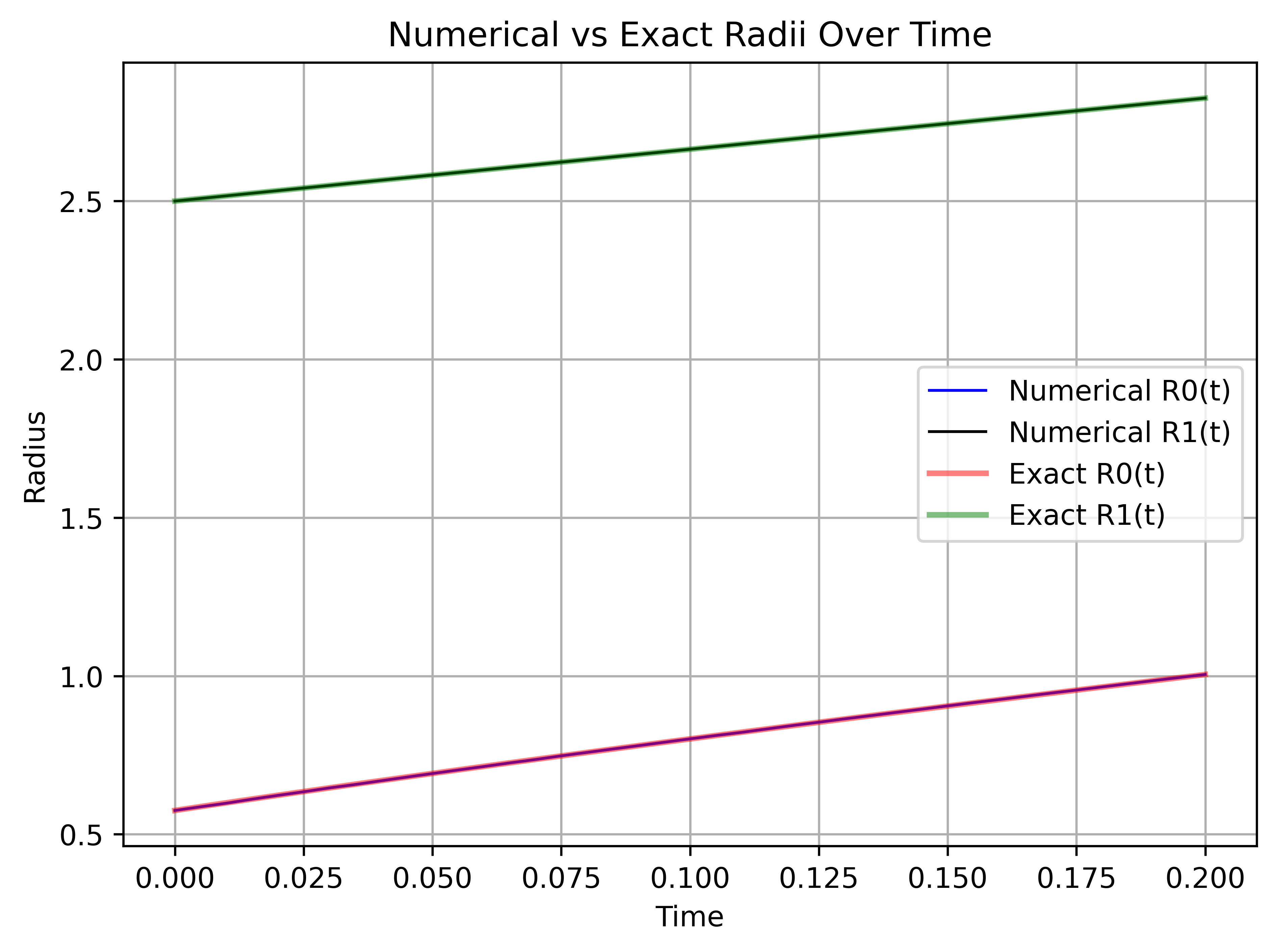}
        \caption{$I = J = 512$, $\Delta t = 0.005$}
        \label{fig1029_4}
    \end{subfigure}
    \caption{Comparison of numerical and exact tumor boundary radii obtained using the numerical method proposed in Section \ref{The in vitro Model with Necrotic Core}.}
    \label{fig1029}
\end{figure}

\begin{table}[ht]
\centering
\begin{tabular}{c|c|c|c|c|c} 
\hline
$I$ & $J$ & $\Delta t$ & $n_T$ & error for $R_0(t)$ & error for $R_1(t)$ \\
\hline
64 & 64 & 0.04 & 5 & $1.908 \times 10^{-2}$ & $6.552 \times 10^{-3}$ \\
128 & 128 & 0.02 & 10 & $4.394 \times 10^{-3}$ & $1.302 \times 10^{-3}$ \\
256 & 256 & 0.01 & 20 & $1.195 \times 10^{-3}$ & $1.443 \times 10^{-4}$ \\
512 & 512 & 0.005 & 40 & $4.172 \times 10^{-4}$ & $4.110 \times 10^{-5}$ \\
\hline
\end{tabular}
\caption{Numerical errors of the inner and outer boundary radii over the time interval $[0, T]$. The errors are computed as discrete $L^2$ norms in time, comparing the numerical results with the exact solution obtained from the ODE system~\eqref{eq1024_1}–\eqref{eq1024_6}.}
\label{tab1029}
\end{table}

\begin{figure}[htbp]
    \centering
    \begin{subfigure}[t]{0.48\textwidth}
        \centering
        \includegraphics[width=\textwidth]{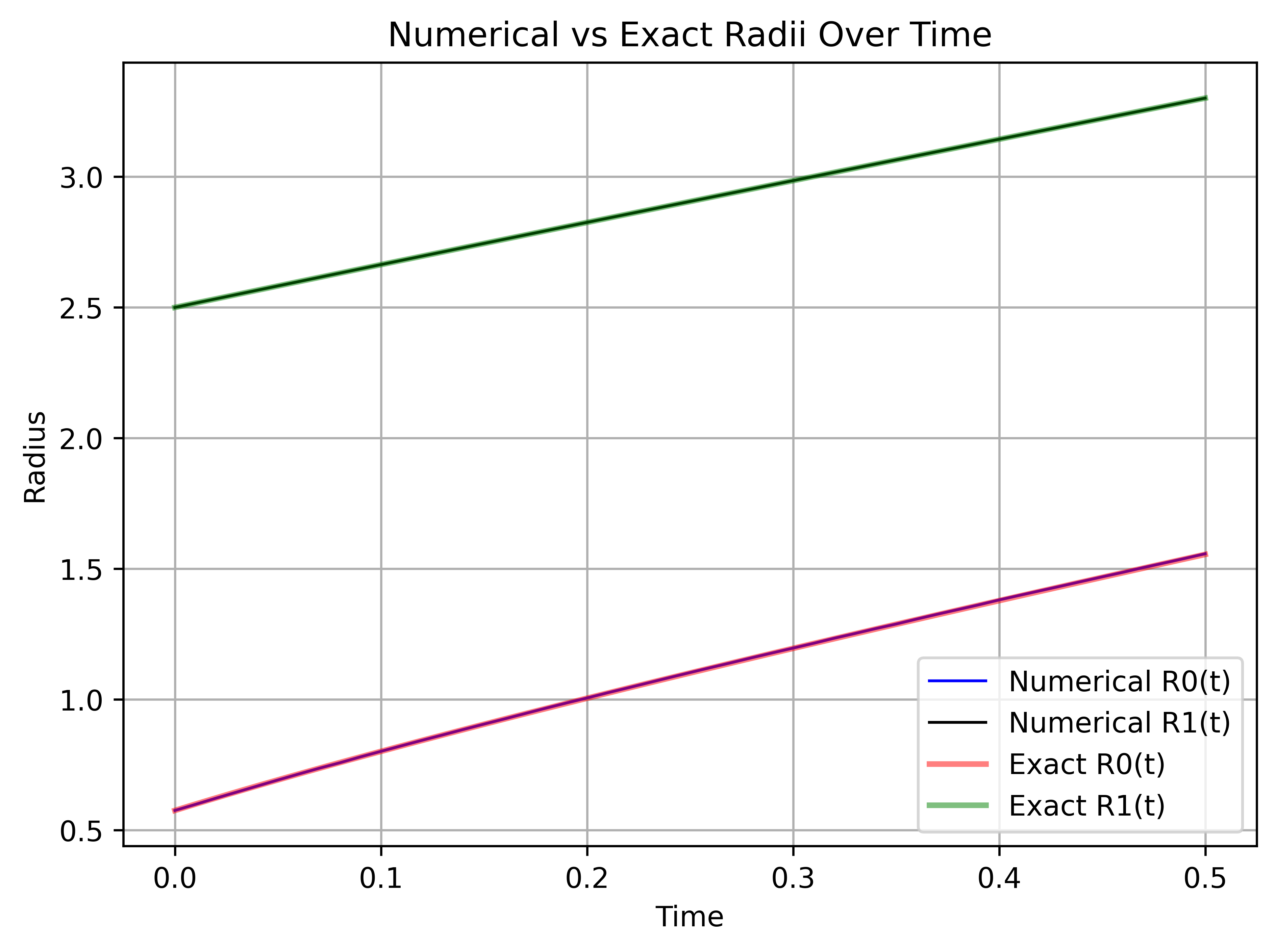}
        \caption{}
        \label{fig1026_3}
    \end{subfigure}
    \hfill
    \begin{subfigure}[t]{0.48\textwidth}
        \centering
        \includegraphics[width=\textwidth]{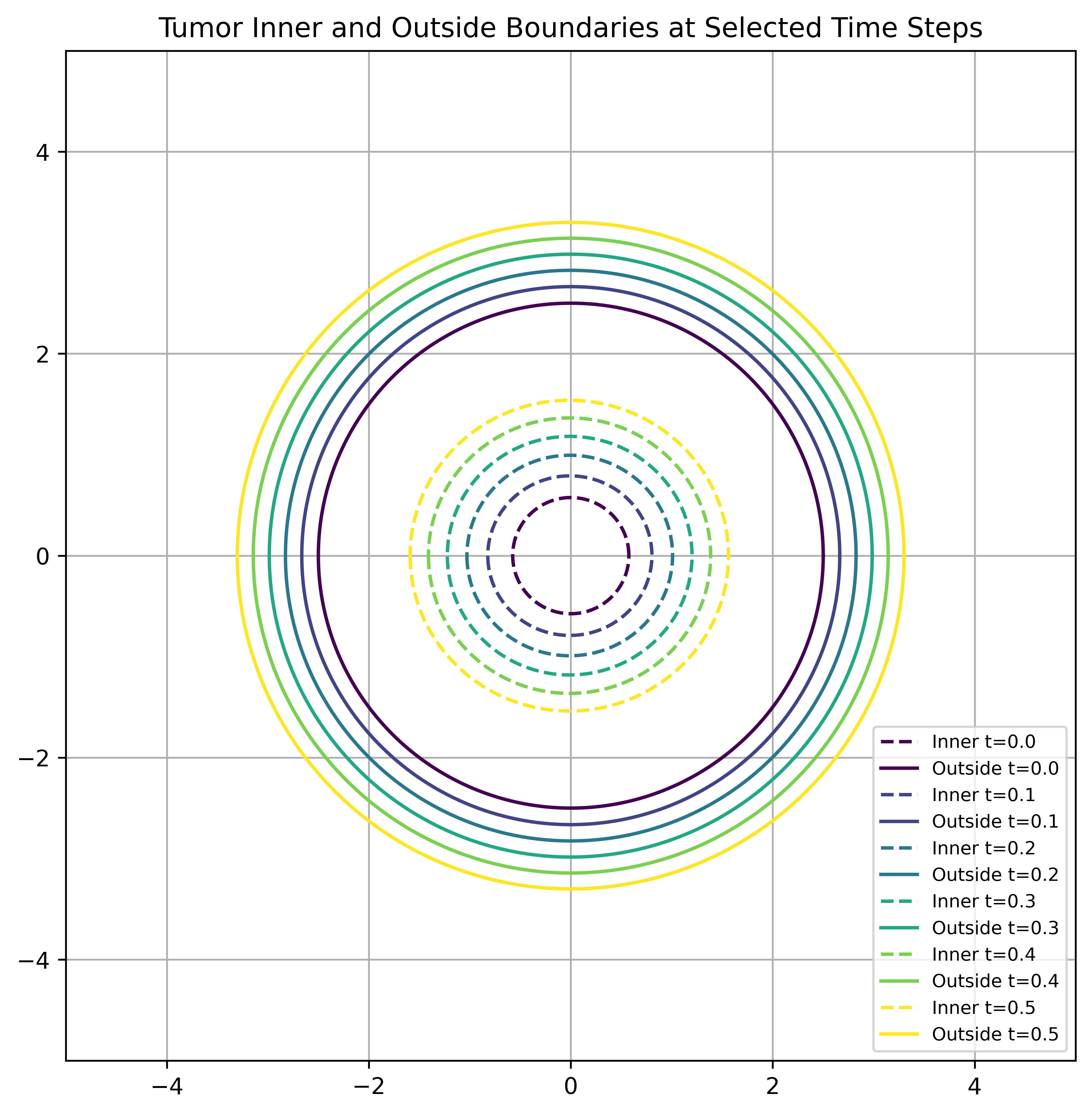}
        \caption{}
        \label{fig1026_4}
    \end{subfigure}
    \caption{Evolution of the inner and outer tumor boundaries and comparison with the exact radii for $I = J = 512$ and $\Delta t = 0.01$.}
    \label{fig1026}
\end{figure}

\subsection{Numerical Observation of Necrotic Core Emergence}
\label{Numerical Observation of Necrotic Core Emergence}
In this section, we investigate the tumor growth dynamics starting from a non-necrotic initial configuration, $N(0)=\emptyset$. The objective is to numerically capture the entire process by which a necrotic core emerges, develops, and eventually enters a stable growth regime. 

This scenario presents a numerical challenge: the governing equations change qualitatively, and a new moving interface nucleates during the simulation. To address this topological transition, we adopt an adaptive, stage-wise simulation strategy:

\medskip 
\noindent \textbf{Stage 1: Fully Viable Phase ($N(t)=\emptyset$).} 
Initially, the interface problem degenerates into the standard elliptic equation \eqref{eq0915_2}. In this phase, we verify the positivity of the pressure solution at each time step. We solve the Poisson equation using the BI \& KFBI Solver in \ref{Poisson Equations}. If the solution remains strictly positive, the tumor remains fully viable and we proceed with the standard forward Euler evolution. If the solution becomes non-positive at any grid point, it indicates the onset of necrosis; we then discard the current step and switch to the obstacle solver (Solver~\ref{alg0914}) to locate the nucleating core, transitioning to Stage 2.

\medskip 
\noindent \textbf{Stage 2: Nucleation Phase ($N(t)\neq\emptyset$ but unresolved).} 
Immediately following emergence, the necrotic core is physically non-empty but may be too small to be resolved by the boundary tracking algorithm (specifically, too few grid points fall within the core to reliably reconstruct a spline). During this transient regime, we solve the obstacle problem to update the pressure but evolve the outer boundary without explicitly tracking the inner interface. We effectively treat the core as a point sink until it grows sufficiently large to be numerically resolved. To ensure accuracy during this sensitive phase, the time step $\Delta t$ is reduced by a factor of 5.

\medskip 
\noindent \textbf{Stage 3: Developed Phase ($N(t)\neq\emptyset$ and resolvable).} 
Once the necrotic region is sufficiently large—defined in our experiments as containing more than 10 grid points that allow for a Fourier fit for the inner boundary with error less than 0.1—we switch to the full two-interface coupled scheme described in Section~\ref{The in vitro Model with Necrotic Core}.

\medskip
\noindent \textbf{Example 5 (Radially symmetric emergence of a necrotic core).} 
This example serves as a benchmark test to validate the proposed numerical strategy against an exact analytical description. 
We consider the model~\eqref{eq0916_1}–\eqref{eq0916_3} with an initially circular outer boundary of radius $R_1^{(0)}$ and no necrotic core, that is, $R_0^{(0)}=0$. 
As shown in \ref{appendix:exact_R0R1}, radial symmetry is preserved throughout the evolution, allowing for an exact characterization of the boundary dynamics.

Before the appearance of the necrotic core, the evolution of the outer boundary is governed by
\begin{equation}
\label{eq1126_1}
\frac{d R_1}{d t}=\frac{G_0}{\lambda} \frac{c_B \sqrt{\lambda}}{I_0(\sqrt{\lambda} R_1)} I_1(\sqrt{\lambda} R_1)-\frac{R_1}{2} G_0 \bar{c}.
\end{equation}
This process continues until the necrotic core emerges. The emergence time $t^{**}$ is determined by the time required for $R_1$ to grow to $R^{**}$, where $R^{**}$ is defined by the equation
\begin{equation}
\label{eq1126_2}
-\frac{G_0}{\lambda}\,\frac{c_B}{I_0(\sqrt{\lambda}\,R^{**})} + \frac{G_0 c_B}{\lambda} - \frac{(R^{**})^2}{4}\, G_0\,\bar{c} = 0.
\end{equation}
The initial radius $R_1^{(0)}$ is chosen such that $R_1^{(0)}<R^{**}$, ensuring that the numerical simulation captures the entire transition from a non--necrotic state to a necrotic one. 
After the necrotic core appears, the evolution of the inner and outer boundaries follows the coupled system~\eqref{eq1024_1}–\eqref{eq1024_6}.

The parameters are set as $c_B = 10$, $\lambda = 1$, $G_0 = 1$, $n_c = 10^{-3}$, $\bar{c} = \tfrac{1}{2}c_B$, and $R_1^{(0)} = 2.2$. 
The computational domain is $\mathcal{B} = [-5, 5] \times [-5, 5]$, with $64$ control points on both boundaries. 
For $I = J = 512$ and $\Delta t = 0.01$, the numerical results are shown in Figure~\ref{fig1126_12}. 
The results demonstrate that the proposed numerical strategy smoothly captures the emergence of the necrotic core and maintains excellent agreement with the exact solution throughout the evolution.

\begin{figure}[htbp]
    \centering
    \begin{subfigure}[t]{0.48\textwidth}
        \centering
        \includegraphics[width=\textwidth]{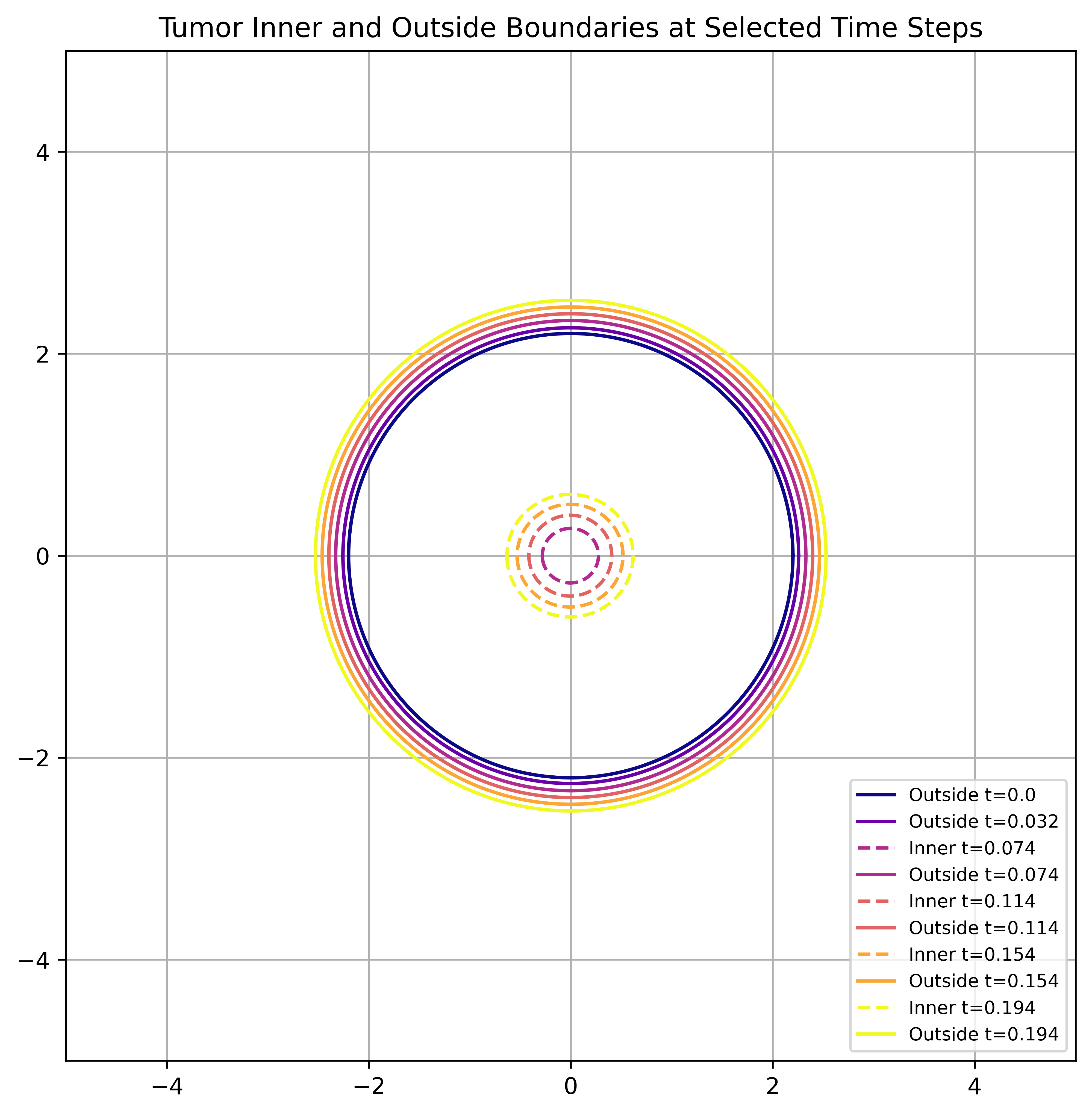}
        \caption{}
        \label{fig1126_1}
    \end{subfigure}
    \hfill
    \begin{subfigure}[t]{0.48\textwidth}
        \centering
        \includegraphics[width=\textwidth]{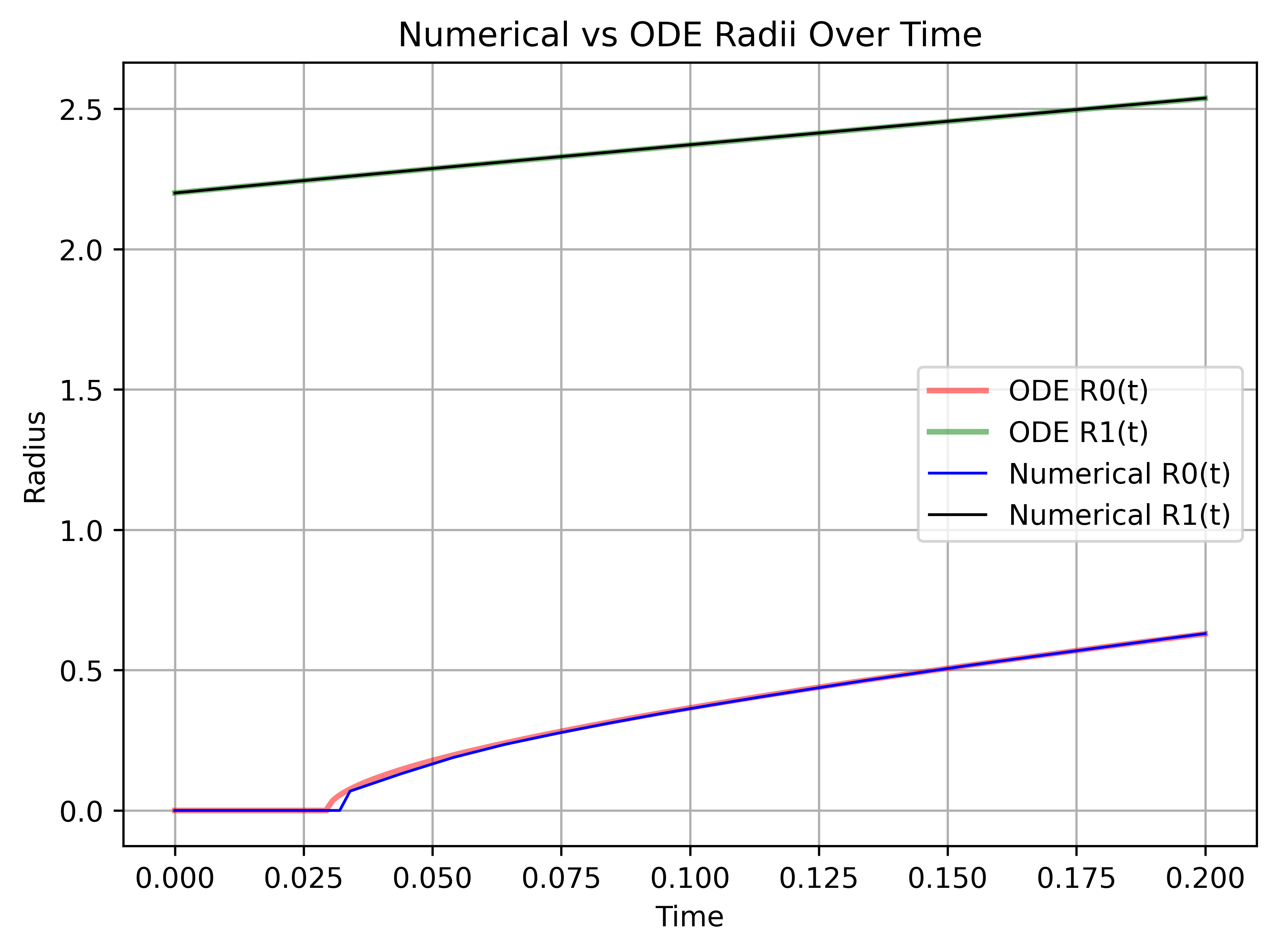}
        \caption{}
        \label{fig1126_2}
    \end{subfigure}
    \caption{Evolution of the inner and outside tumor boundaries and the comparison of the numerical and exact tumor boundary radii. For the numerical solutions, the radii of the inner and outside boundaries are computed for the control points in the same manner as described in Table \ref{tab1020_1}. The exact radius are obtained by solving the ODEs using the same ODE solver outlined in Table \ref{tab1020_1}.}
    \label{fig1126_12}
\end{figure}

\medskip
\noindent \textbf{Example 6 (Robustness under non-circular perturbations).} 
This example examines the robustness of the proposed method when the initial tumor boundary is non-circular. The initial outer boundary is given by
\[
\partial D(0) = \{(x, y): x = r(\theta)\cos\theta,\ y = r(\theta)\sin\theta,\ r(\theta) = 2.2 + 0.02\cos(4\theta)\}.
\]
The model parameters are set as $c_B = 0.5$, $\lambda = 1.4$, $G_0 = 10$, $n_c = 10^{-3}, \bar{c} = \tfrac{1}{2}c_B$. The computational domain $\mathcal{B} = [x_{\text{min}}, x_{\text{max}}] \times [y_{\text{min}}, y_{\text{max}}] = [-5, 5] \times [-5, 5]$ and the number of control points for inner and outside boundaries are both equal to $64$. For $I = J = 256, \Delta t = 0.02$, we obtain the numerical results as shown in figure \ref{fig1126_3}. From the figure, we observe that under our parameter settings, although the initial outer boundary is no longer a perfect circle, the necrotic core still evolves while maintaining a circular shape.

\medskip
\noindent \textbf{Example 7 (Comparison with the non-necrotic model).} This example compares the influence of two rate functions on the evolution of tumor boundaries. In this example, the initial tumor boundary is an ellipse with semi-axes $2.3$ and $1.2$, and $N(0)=\emptyset$. The model parameters are set as $c_B = 10$, $\lambda = 1$, $G_0 = 1$, $n_c = 10^{-3}, \bar{c} = \tfrac{1}{2}c_B$. The computational domain $\mathcal{B} = [x_{\text{min}}, x_{\text{max}}] \times [y_{\text{min}}, y_{\text{max}}] = [-5, 5] \times [-5, 5]$ and the number of control points for inner and outside boundaries are both equal to $64$. For $I = J = 256, \Delta t = 0.02$, we obtain the numerical results as shown in figure \ref{fig1126_4}. Note that the parameter choices of $c_B$, $\lambda$, and $G_0$ here are exactly the same as those used in \textbf{Example 2} for the simulation shown in Figure~\ref{fig1020_6}. However, due to the different choice of the rate function $G(c)$, the outcomes shown in Figures~\ref{fig1020_6} and~\ref{fig1126_4} are completely different. When $G(c)$ is chosen as $G_0 (c - \bar{c})$, the model develops a necrotic core, and the tumor boundary cannot easily return to a circular shape in a short time, forming a sharp contrast with the behavior observed in Figure~\ref{fig1020_6}.

\begin{figure}[htbp]
    \centering
    \begin{subfigure}[t]{0.48\textwidth}
        \centering
        \includegraphics[width=\textwidth]{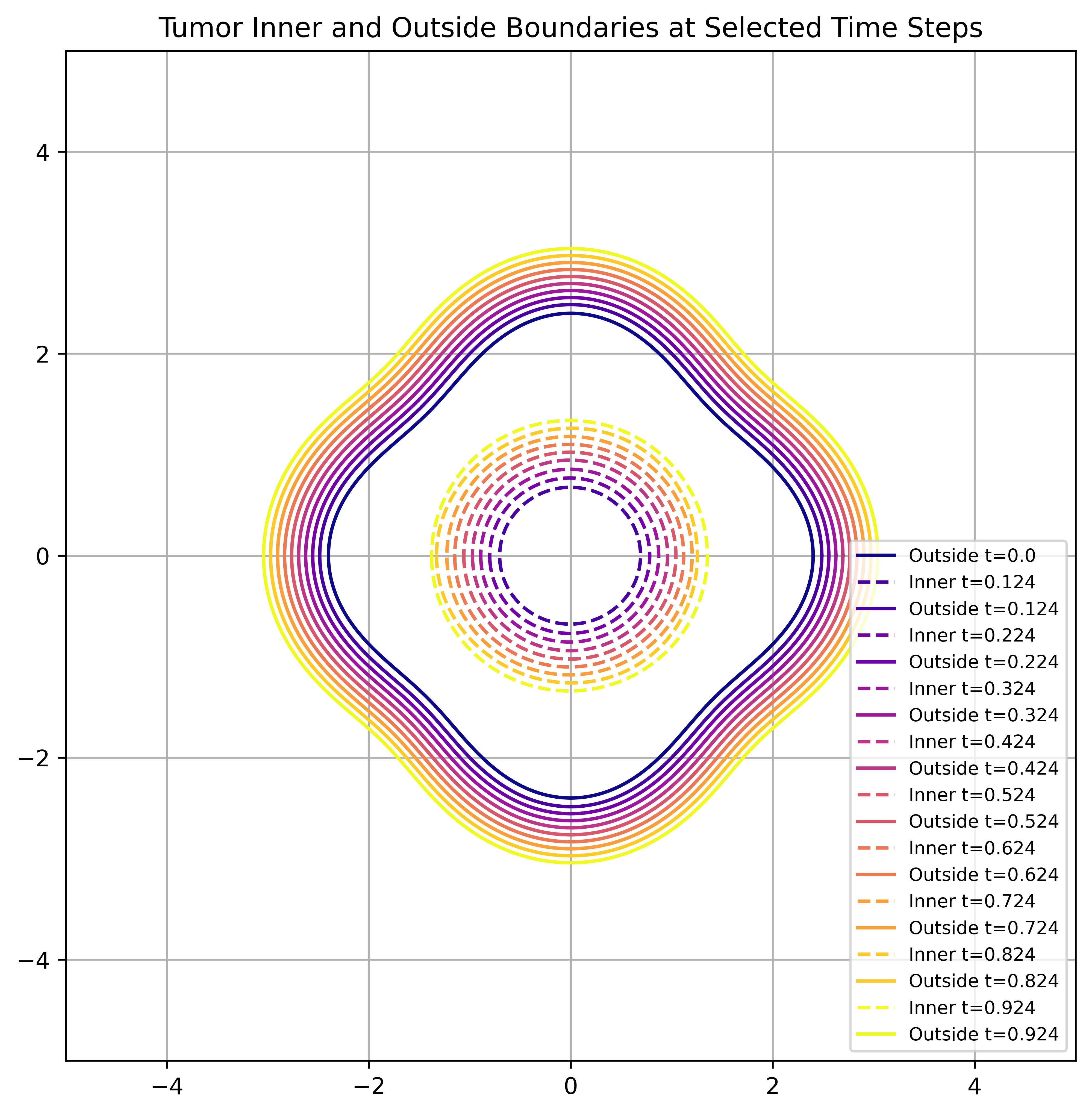}
        \caption{Evolution of boundaries in Example~6.}
        \label{fig1126_3}
    \end{subfigure}
    \hfill
    \begin{subfigure}[t]{0.48\textwidth}
        \centering
        \includegraphics[width=\textwidth]{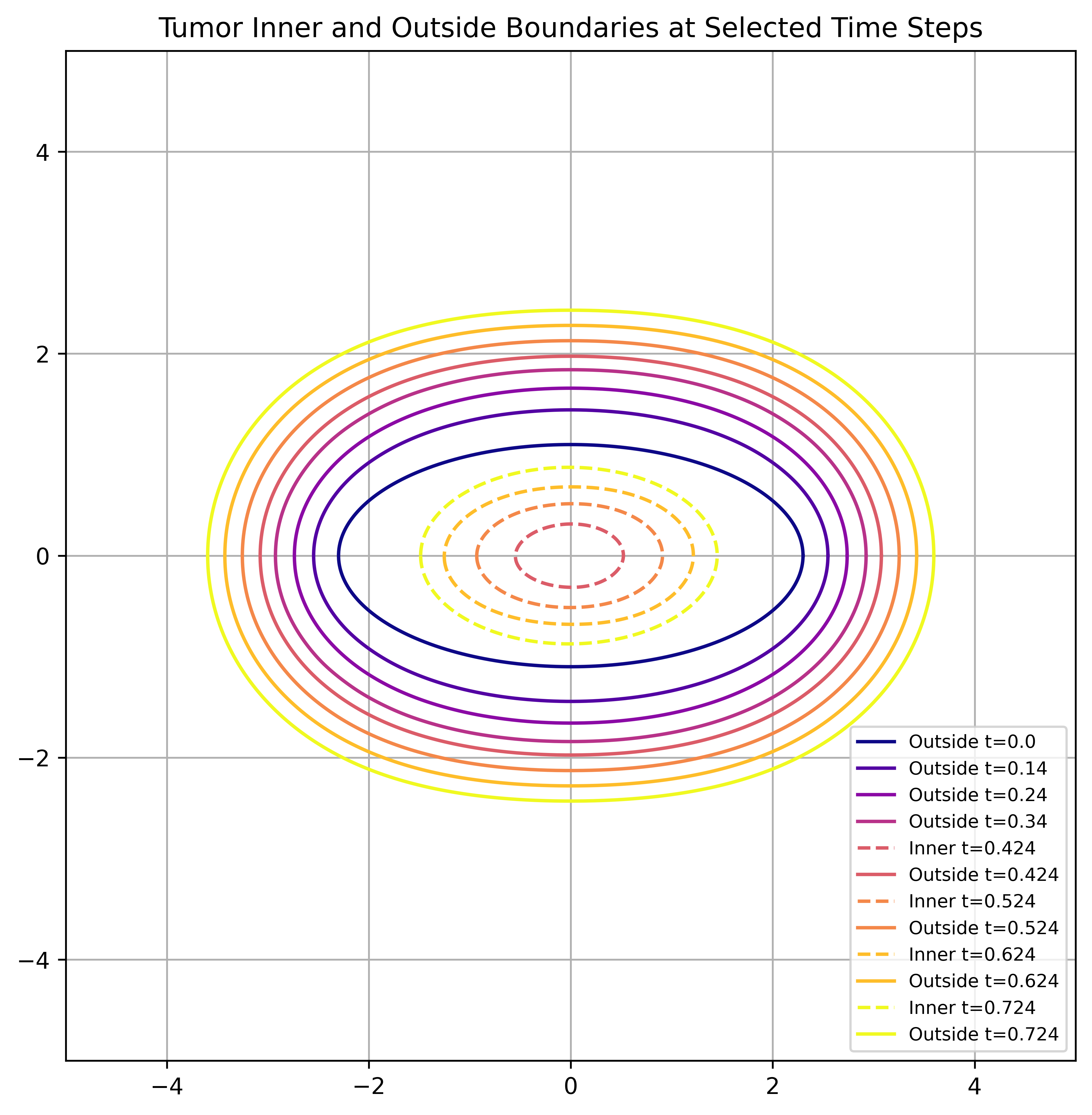}
        \caption{Evolution of boundaries in Example~7.}
        \label{fig1126_4}
    \end{subfigure}
    \caption{Numerical results for the evolution of the inner and outer tumor boundaries starting from non-necrotic initial configurations.}
    \label{fig1126_34}
\end{figure}

\section{Conclusion} \label{Conclusion}

In this work, we have developed a comprehensive numerical framework for simulating tumor growth dynamics governed by Hele-Shaw type equations. Utilizing the Boundary Integral (BI), Kernel-Free Boundary Integral (KFBI) and augmented Lagrangian active set methods as efficient underlying solvers on Cartesian grids, we constructed a robust iterative algorithm capable of simulating the complex interplay between nutrient diffusion, pressure fields, and domain evolution.

The core innovation of this study lies in the algorithmic strategy for the simultaneous evolution of two distinct types of free boundaries. The tumor entails an outer boundary driven explicitly by Darcy's velocity law, and an inner necrotic boundary defined implicitly as the free boundary of an obstacle problem. To accurately capture the latter, we introduced a predictor-corrector strategy specifically for the inner necrotic boundary. By iteratively updating the potential fields based on predicted locations of the inner interface and subsequently correcting its position, this strategy effectively stabilizes the numerical evolution, preventing the instabilities typically associated with tracking implicit free boundaries.

We have provided a rigorous error analysis for the model without necrotic core, establishing second-order accuracy for the potential solvers and first-order convergence for the boundary evolution. For the more complex model with a necrotic core, the accuracy and stability of the coupled scheme are corroborated by benchmark simulations against analytical solutions for radially symmetric configurations. Furthermore, the algorithm demonstrates exceptional robustness in handling topological transitions, seamlessly capturing the nucleation process where the necrotic core emerges from a fully viable tumor.

Looking forward, this computational framework paves the way for deeper investigations into tumor morphology. A primary short-term goal is to incorporate the more complex \textit{in vivo} nutrient model, where heterogeneous nutrient supply may trigger richer geometric dynamics. We plan to use this solver to explore and validate the mechanisms underlying boundary instabilities, such as the onset of fingering patterns. Additionally, given that the underlying KFBI method naturally avoids surface meshing, extending this algorithm to three-dimensional simulations remains a promising direction for future research.

\section*{Acknowledgements}
YF is supported by the National Key R\&D Program of China, Project Number 2021YFA1001200, the NSFC Youth program, Grant Number 12501669, the NSFC  International Collaboration Fund for Creative Research Teams, Grant Number W2541005.
WY is supported by the National Natural Science Foundation of China in the Division of Mathematical Sciences (Project No. 12471342) and the fundamental research funds for the central universities. 
ZZ is supported by the National Key R\&D Program of China, Project Number 2021YFA1001200, Zhejiang Provincial Natural Science Foundation of China, Project Number QKWL25A0501, and Fundamental and Interdisciplinary Disciplines Breakthrough Plan of the Ministry of Education of China, Project Number JYB2025XDXM502.

\newpage

\appendix

\section{Numerical Solvers} \label{Numerical Solvers}
This section presents how to apply the boundary integral (BI) method and the kernel-free boundary integral (KFBI) method to solve the Poisson equation, the modified Helmholtz equation, as well as one kind of interface problem of modified Helmholtz type. In addition, we demonstrate how to employ the Primal–Dual Active Set Algorithm to solve a unilateral obstacle problem.

The numerical methods summarized in this Section are designed to efficiently handle different classes of elliptic problems while sharing a common implementation framework. 
The boundary integral (BI) method reduces boundary value problems to boundary-only formulations, leading to a significant reduction in dimensionality and allowing for accurate treatment of complex geometries. 
Its main limitations lie in the reliance on fundamental solutions, the dense system matrices, and the need for special quadrature techniques for nearly singular integrals. 
The kernel-free boundary integral (KFBI) method alleviates these difficulties by reformulating boundary and volume potentials as equivalent simple interface problems on Cartesian grids, resulting in linear systems with standard discretization matrices that can be efficiently solved using fast algorithms such as FFT-based solvers, while avoiding explicit evaluation of Green’s functions. 
For obstacle problems, the Primal–Dual Active Set Algorithm provides an efficient and robust approach by transforming the constrained minimization problem into a sequence of linear systems associated with active and inactive sets, and typically exhibits fast, finite-step convergence. Notably, all three methods can be implemented effectively on Cartesian grids, which facilitates a unified numerical framework and enables the use of efficient solvers for free boundary problems.

\subsection{Modified Helmholtz Equations} \label{Modified Helmholtz Equations}
In this section, we introduce how to apply the boundary integral (BI) method \cite{hsiao2021boundary, atkinson1997numerical, kress1989linear} to solve the modified Helmholtz equations.

Let $\Omega \subset \mathbb{R}^2$ be a bounded domain with smooth boundary. Let $g(\mathbf{x})$ be a smooth function defined on the domain boundary $\Gamma := \partial \Omega$, $\kappa > 0$ be a coefficient. Consider the following boundary value problem
\begin{empheq}[left=\empheqlbrace]{align}
\Delta u - \kappa u = 0 \quad & \text {in}\ \Omega, \label{eq0911_1}\\
u = g \quad & \text {on}\ \partial \Omega. \label{eq0911_2}
\end{empheq}

Let $G_0(\mathbf{x}, \mathbf{y})$ be the fundamental solution of the elliptic operator $(\Delta - \kappa)$ on $\mathbb{R}^2$ that satisfies
\begin{equation} \label{eq0911_3}
\Delta_y G_0(\mathbf{x}, \mathbf{y})-\kappa G_0(\mathbf{x}, \mathbf{y}) =\delta(\mathbf{x}-\mathbf{y}), \quad \mathbf{y} \in \mathbb{R}^2,
\end{equation}
for each $\mathbf{x} \in \mathbb{R}^2$, where $\delta$ is the Dirac delta function in $\mathbb{R}^2$. For clarity, we denote the integral operators $W$, known as the double layer potentials, as
\begin{equation} \label{eq0911_4}
W[\varphi](\mathbf{x}) = \int_{\Gamma} \frac{\partial G_0(\mathbf{x}, \mathbf{y})}{\partial \mathbf{n}_{\mathbf{y}}} \, \varphi(\mathbf{y}) \, ds_{\mathbf{y}}, 
\end{equation}
where $\varphi$ is a density function defined on $\Gamma$.

The indirect boundary integral equation of problem \eqref{eq0911_1}-\eqref{eq0911_2} is given by:
\begin{equation} \label{eq0911_12}
\frac{1}{2} \varphi + W[\varphi] = g \quad \text{on}\ \Gamma.    
\end{equation}
And the solution $u$ can be obtained from $\varphi$ that
\begin{equation} \label{eq0911_13}
u(\mathbf{x}) = W[\varphi](\mathbf{x}),\quad \forall \mathbf{x} \in \Omega.    
\end{equation}

The boundary integral equation can be discretized by a Nyström method with uniformly distributed boundary nodes and composite quadrature rules \cite{atkinson1997numerical}. 
This leads to a linear system of the form
\begin{equation}
\frac12 \varphi_h + \mathbf{A}_h \varphi_h = \mathbf{g}_h,
\end{equation}
which is solved by an iterative solver. The solution inside the domain is then reconstructed via the double-layer potential. For any $\mathbf{x} \in \Omega$, 
\begin{equation} \label{eq0911_20}
u_h(\mathbf{x})=\sum_{j=0}^{M-1} h_j \frac{\partial G_0\left(\mathbf{x}_j ; \mathbf{x}\right)}{\partial \mathbf{n}_{\mathbf{x}_j}}\left|\mathbf{x}^{\prime}\left(t_j\right)\right| \varphi_j.
\end{equation}
However, when the target point $\mathbf{x}$ is not on but close to the boundary, the boundary integral \eqref{eq0911_13} is a nearly singular integral. A method proposed in \cite{beale2001method}, which combines singularity subtraction with a smoothing technique, can be applied here and achieves third-order accuracy. Specifically, if an accurate $\varphi$ is obtained, the error satisfies
\[
|u_h(\mathbf{x}) - u(\mathbf{x})| = \mathcal{O}(h^3),
\]
where $u_h(\mathbf{x})$ is computed using this method, and $u(\mathbf{x})$ is the exact solution given by the integral \eqref{eq0911_13}.

\begin{remark} \label{rmk_convergence_BI}
The rate at which $||\varphi - \varphi_h||_\infty$ tends to zero can be determined from results on the convergence of the quadrature rule used for the integral in boundary integral equations \cite[see Chapter~4.1.1]{atkinson1997numerical}. In general, the composite trapezoidal rule attains second-order accuracy. When applied to periodic functions, its convergence rate may be considerably higher; nevertheless, it is reasonable to adopt the conservative estimate of second order:
\[
\|\varphi-\varphi_h\|_\infty \leq \mathcal{C}_1 h^2,\quad \text{for some }\mathcal{C}_1 > 0.
\]
After substituting $\varphi_h$ into the reconstruction formula \eqref{eq0911_20}, one typically obtains the same order of accuracy for the PDE solution at points $\mathbf{x}$ located away from the boundary,
\[
|u(\mathbf{x})-u_h(\mathbf{x})| \leq \mathcal{C} h^2,\quad \text{for some }\mathcal{C} > 0.
\]
It can be expected that
\[
\|u-u_h\|_{L^\infty(K)} = O(h^2)\quad\text{for any compact }K\subset\Omega,
\]
provided that singularity subtraction or other appropriate treatments (as stated at the end of the preceding paragraph) are applied in the quadrature whenever necessary.
\end{remark}

\subsection{Poisson Equations} \label{Poisson Equations}
In this section, we introduce how to apply the boundary integral (BI) method \cite{hsiao2021boundary, atkinson1997numerical, kress1989linear} together with kernel-free boundary integral (KFBI) method \cite{ying2007kernel} to solve the Poisson equations.

Let $\Omega \subset \mathbb{R}^2$ be a bounded domain with smooth boundary. Let $g(\mathbf{x})$ be a smooth function defined on the domain boundary $\Gamma := \partial \Omega$, $f(\mathbf{x})$ be a source term in $\Omega$. For later convenience, let $\mathcal{B}$ be a larger regular domain, which completely contains the domain $\Omega$. Consider the following boundary value problem
\begin{empheq}[left=\empheqlbrace]{align}
\Delta u = f \quad & \text {in}\ \Omega, \label{eq0912_1}\\
u = g \quad & \text {on}\ \partial \Omega. \label{eq0912_2}
\end{empheq}

Let $G_0(\mathbf{x}, \mathbf{y})$ denote the fundamental solution of the elliptic operator $\Delta$ in $\mathbb{R}^2$. It is well known that in two dimensions,
\[
G_0(\mathbf{x}, \mathbf{y}) = \frac{1}{2\pi} \ln|\mathbf{x}-\mathbf{y}|.
\]

Let $G(\mathbf{x}, \mathbf{y})$ be the Green’s function of the elliptic operator $\Delta$ in the rectangle $\mathcal{B}$, which satisfies
\begin{equation} \label{eq0912_4}
\begin{aligned}
\Delta_y G(\mathbf{x}, \mathbf{y}) &= \delta(\mathbf{x}-\mathbf{y}), && \mathbf{y} \in \mathcal{B}, \\
G(\mathbf{x}, \mathbf{y}) &= 0, && \mathbf{y} \in \partial\mathcal{B},
\end{aligned}
\end{equation}
for each fixed $\mathbf{x} \in \mathcal{B}$. Define a function $u_p$ on $\Omega$ by  
\begin{equation} \label{eq0912_5}
u_p(\mathbf{x}) = \int_{\Omega} G(\mathbf{x}, \mathbf{y}) f(\mathbf{y}) \, d\mathbf{y}. 
\end{equation}
It is straightforward to verify, by direct computation and the definition of the Green’s function $G$, that $u_p$ is a particular solution of equation \eqref{eq0912_1}. 
Consequently, problem \eqref{eq0912_1}-\eqref{eq0912_2} can be reduced to the following homogeneous problem:
\begin{empheq}[left=\empheqlbrace]{align}
\Delta u = 0 \quad & \text{in } \Omega, \label{eq0912_6}\\[3pt]
u = g - u_p|_{\partial \Omega} \quad & \text{on } \partial \Omega. \label{eq0912_7}
\end{empheq}
The problem \eqref{eq0912_6}-\eqref{eq0912_7} can be solved by exactly the same procedure as described in Section \ref{Modified Helmholtz Equations}, except that the explicit form of $G_0$ is different. Denote the solution of \eqref{eq0912_6}-\eqref{eq0912_7} by $\bar{u}$. The the solution of \eqref{eq0912_1}-\eqref{eq0912_2} is given by $u = u_p + \bar{u}$. 

Regarding the computation of the particular solution $u_p$, i.e., the volume integral \eqref{eq0912_5}, we adopt the same strategy as in the KFBI method \cite{ying2007kernel}. Specifically, to evaluate the volume integral $u_p$, one can consider the following equivalent simple interface problems
\begin{empheq}[left=\empheqlbrace]{align} 
\Delta u_p = \mathbf{F} & := \begin{cases}f & \text { in } \Omega \\ 0 & \text { in } \mathcal{B} \backslash \Omega \end{cases}, \label{eq110910_27} \\ 
[u_p] = 0 \quad & \text {on}\ \partial \Omega, \label{eq110910_28} \\ 
\left[\frac{\partial u_p}{\partial\mathbf{n}}\right] = 0 \quad & \text {on}\ \partial \Omega, \label{eq110910_29} \\ 
u_p = 0 \quad & \text {on}\ \partial \mathcal{B}.\label{eq110910_30} 
\end{empheq}
In the KFBI method, the solution of the simple interface problem \eqref{eq110910_27}--\eqref{eq110910_30} is mainly obtained by solving the discrete system arising from the corrected finite difference scheme using fast Fourier transformation (FFT), and the method achieves second-order accuracy. Detailed descriptions can be found in Section~\ref{5 The Kernel-Free Boundary Integral Method}.

\subsection{Interface Problems of the Modified Helmholtz Type} \label{More Challenging Interface Problems of the Modified Helmholtz Type}
In this section, we introduce how to apply the kernel-free boundary integral (KFBI) method to solve the interface problems of the modified Helmholtz type. 

\subsubsection{Interface Problems} \label{5 Interface Problems}
Let $\mathbf{x} = (x_1, x_2) \in \mathbb{R}^2$ denote a point in the two-dimensional space and $\Omega \subset \mathbb{R}^2$ be a simply connected bounded domain with smooth boundary $\Gamma_1 := \partial \Omega$. Suppose that $\Gamma_0 \subset \Omega$ is a smooth interface which separates $\Omega$ into two disjoint subdomains $\Lambda$ and $\mathbf{N}$. A schematic illustration is provided in Figure \ref{fig0908_1}. For later convenience, let $\mathcal{B}$ be a larger regular domain, which completely contains the domain $\mathbf{N}$, as shown in Figure \ref{fig0908_2}.

\begin{figure}[htbp]
    \centering
    \begin{subfigure}{0.48\textwidth}
        \centering
        \includegraphics[width=\linewidth]{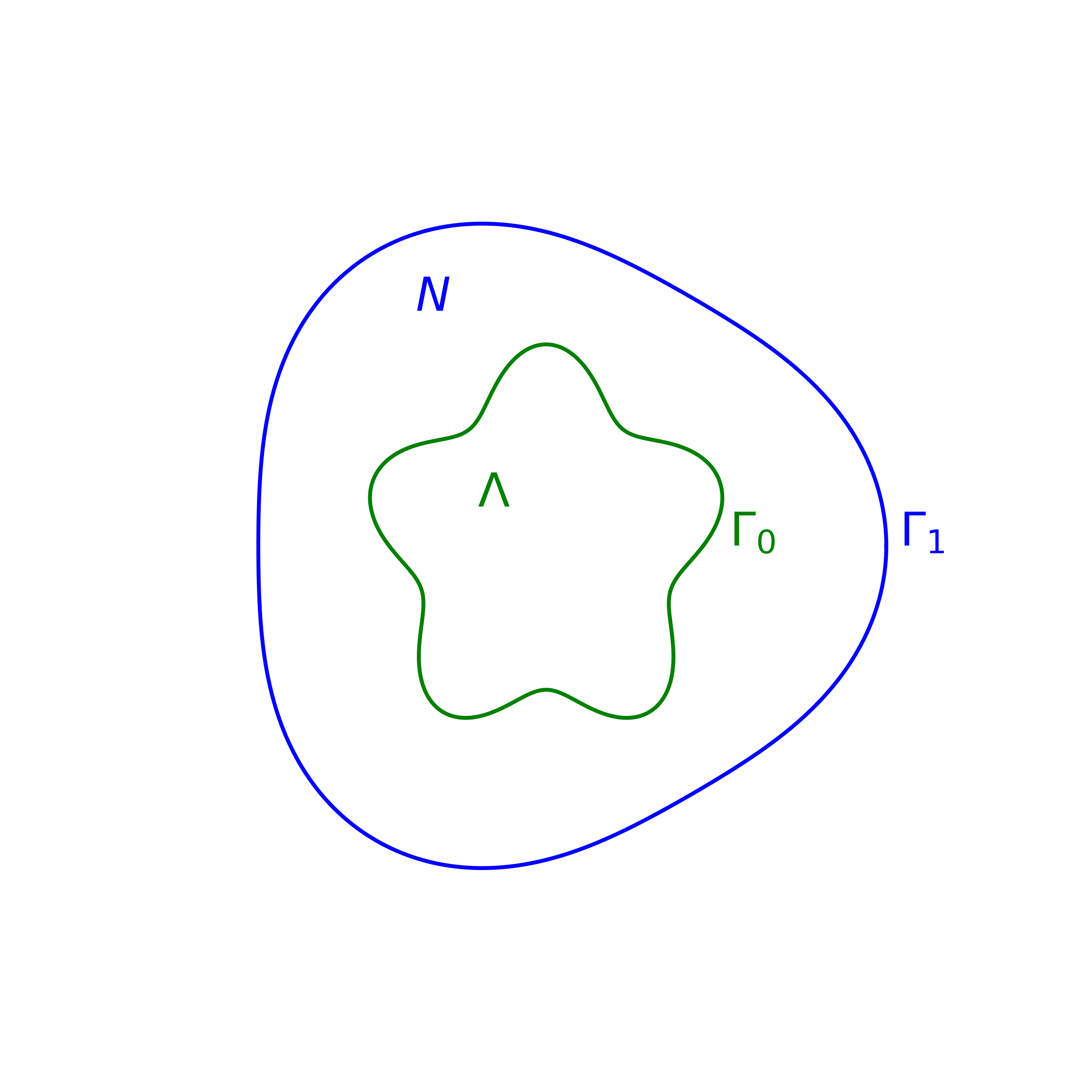}
        \caption{Two subdomains $\Lambda$ and $\mathbf{N}$.}
        \label{fig0908_1}
    \end{subfigure}
    \hfill
    \begin{subfigure}{0.48\textwidth}
        \centering
        \includegraphics[width=\linewidth]{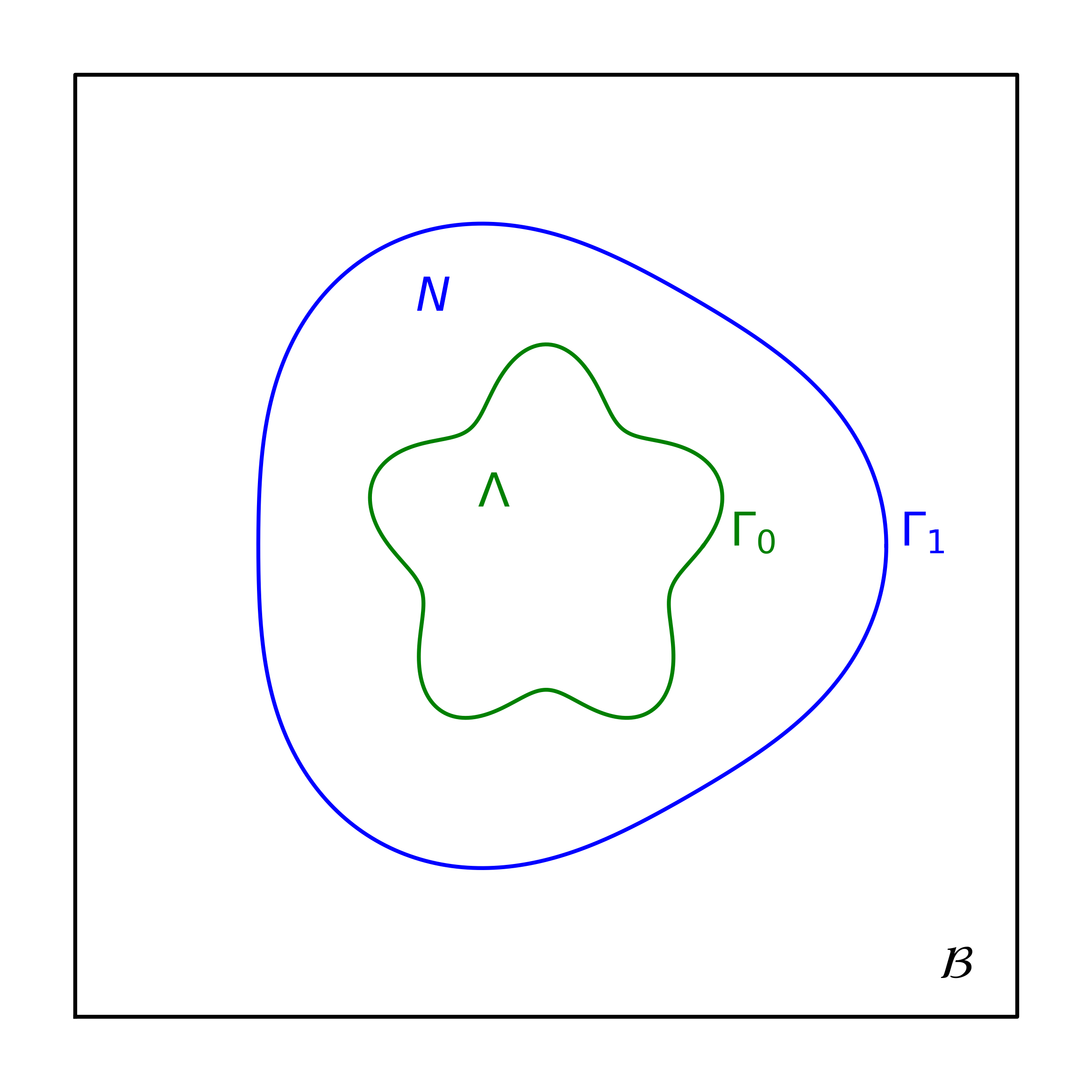}
        \caption{The extended domain $\mathcal{B}$ enclosing $\mathbf{N}$.}
        \label{fig0908_2}
    \end{subfigure}
    \caption{Schematic diagrams for domains of interface problems.}
    \label{fig0908_3}
\end{figure}

Consider the following interface problem
\begin{empheq}[left=\empheqlbrace]{align}
\Delta u - \kappa_i u = f_i \quad & \text {in}\ \Lambda, \label{eq0908_1}\\
\Delta u - \kappa_e u = f_e \quad & \text {in}\ \mathbf{N}, \label{eq0908_2}\\
[u] = g \quad & \text {on}\ \Gamma_0, \label{eq0908_3}\\
\left[\frac{\partial u}{\partial\mathbf{n}}\right] = j \quad & \text {on}\ \Gamma_0, \label{eq0908_4}\\
u = h \quad & \text {on}\ \Gamma_1. \label{eq0908_5}
\end{empheq}
Here, $\kappa_i, \kappa_e > 0$ are prescribed coefficients; $f_i, f_e, g, j, h$ denote sufficiently smooth given data; and $\mathbf{n}$ is the unit normal vector on $\Gamma_0$, oriented from $\Lambda$ toward $\mathbf{N}$.

\subsubsection{The Boundary Integral Equations} \label{5 The Boundary Integral Formulations}
Let $G_i(\mathbf{x}, \mathbf{y})$ be the Green’s function of the elliptic operator $(\Delta - \kappa_i)$ on the rectangle $\mathcal{B}$ that satisfies
\begin{equation} \label{eq0908_6}
\begin{aligned}
\Delta_y G_i(\mathbf{x}, \mathbf{y})-\kappa_i G_i(\mathbf{x}, \mathbf{y}) & =\delta(\mathbf{x}-\mathbf{y}), & & \mathbf{y} \in \mathcal{B}, \\
G_i(\mathbf{x}, \mathbf{y}) & =0, & & \mathbf{y} \in \partial \mathcal{B},
\end{aligned}
\end{equation}
for each $\mathbf{x} \in \mathcal{B}$. Let $G_e(\mathbf{x}, \mathbf{y})$ be the Green’s function of the elliptic operator $(\Delta - \kappa_e)$ on the rectangle $\mathcal{B}$ that satisfies
\begin{equation} \label{eq0908_7}
\begin{aligned}
\Delta_y G_e(\mathbf{x}, \mathbf{y})-\kappa_e G_e(\mathbf{x}, \mathbf{y}) & =\delta(\mathbf{x}-\mathbf{y}), & & \mathbf{y} \in \mathcal{B}, \\
G_e(\mathbf{x}, \mathbf{y}) & =0, & & \mathbf{y} \in \partial \mathcal{B},
\end{aligned}
\end{equation}
for each $\mathbf{x} \in \mathcal{B}$. For clarity, we denote the integral operators 
$W^0_i$, $W^0_e$, $W^1_i$, and $W^1_e$, known as the double layer potentials, as
\begin{equation} \label{eq0908_8}
W_a^k[\varphi](\mathbf{x}) = \int_{\Gamma_k} \frac{\partial G_a(\mathbf{x}, \mathbf{y})}{\partial \mathbf{n}_{\mathbf{y}}} \, \varphi(\mathbf{y}) \, ds_{\mathbf{y}}, 
\quad a = i, e, \quad k = 0, 1,
\end{equation}
where $\varphi$ is a density function defined on $\Gamma_0$ or $\Gamma_1$. Similarly, the integral operators 
$V^0_i$, $V^0_e$, $V^1_i$, and $V^1_e$, referred to as the single layer potentials, are defined by
\begin{equation} \label{eq0908_9}
V_a^k[\psi](\mathbf{x}) = \int_{\Gamma_k} G_a(\mathbf{x}, \mathbf{y}) \, \psi(\mathbf{y}) \, ds_{\mathbf{y}}, 
\quad a = i, e, \quad k = 0, 1,
\end{equation}
where $\psi$ is a density function on $\Gamma_0$ or $\Gamma_1$. And the integral operators $\mathcal{Y}_i$, $\mathcal{Y}_e$, referred to as the Yukawa potentials (or volume integral), are defined by
\begin{equation} \label{eq0908_10}
\mathcal{Y}_i[\rho_i](\mathbf{x}) = \int_{\Lambda} G_i(\mathbf{x}, \mathbf{y}) \, \rho_i(\mathbf{y}) \, d \mathbf{y}, 
\end{equation}
\begin{equation} \label{eq0908_11}
\mathcal{Y}_e[\rho_e](\mathbf{x}) = \int_{\mathbf{N}} G_e(\mathbf{x}, \mathbf{y}) \, \rho_e(\mathbf{y}) \, d \mathbf{y}, 
\end{equation}
where $\rho_i$ is a density function on $\Lambda$ and $\rho_e$ is a density function on $\mathbf{N}$.

Following standard potential theory \cite{hsiao2021boundary, steinbach2008numerical}, the interface problem can be reduced to a system of boundary integral equations 
\begin{equation} \label{eq0908_23_}
\begin{aligned}
& \varphi - (W_i^0[\varphi])^+ + (W_e^0[\varphi])^+ - (- V_i^0[\psi])^+ + (-V_e^0[\psi])^+ - (-V_e^1[\phi])^+ \\
= & (W_e^0[g])^+ + (- V_i^0[j])^+ + (W_e^1[h])^+ + (\mathcal{Y}_i[f_i])^+ + (\mathcal{Y}_e[f_e])^+,
\end{aligned}
\end{equation}
\begin{equation} \label{eq0908_24_}
\begin{aligned}
& \psi - (\frac{\partial}{\partial \mathbf{n}}(W_i^0[\varphi]))^+ + (\frac{\partial}{\partial \mathbf{n}}(W_e^0[\varphi]))^+ - (\frac{\partial}{\partial \mathbf{n}}(-V_i^0[\psi]))^+ 
+ (\frac{\partial}{\partial \mathbf{n}}(-V_e^0[\psi]))^+ - (\frac{\partial}{\partial \mathbf{n}}(-V_e^1[\phi]))^+ \\
= & -j + (\frac{\partial}{\partial \mathbf{n}}(W_e^0[g]))^+ + (\frac{\partial}{\partial \mathbf{n}}(-V_i^0[j]))^+ + (\frac{\partial}{\partial \mathbf{n}}(W_e^1[h]))^+ + (\frac{\partial}{\partial \mathbf{n}}(\mathcal{Y}_i[f_i]))^+ + (\frac{\partial}{\partial \mathbf{n}}(\mathcal{Y}_e[f_e]))^+,
\end{aligned}
\end{equation}
\begin{equation} \label{eq0908_25_}
- (W_e^0 [\varphi])^+ - (- V_e^0 [\psi])^+ + (-V_e^1 [\phi])^+ = h - (W_e^0[g])^+ -(W_e^1[h])^+ - (\mathcal{Y}_e[f_e])^+.
\end{equation}
Note that in equations \eqref{eq0908_23_}--\eqref{eq0908_24_}, 
the superscript `$+$' of a function $\xi$ denotes
\[
\xi^+(\mathbf{x}) = \lim_{\substack{\mathbf{y}\in\Lambda \\ \mathbf{y}\to\mathbf{x}}} \xi(\mathbf{y}),\quad \forall \mathbf{x} \in \Gamma_0,
\]
whereas in equation \eqref{eq0908_25_}, 
the superscript `$+$' of a function $\eta$ denotes
\[
\eta^+(\mathbf{x}) = \lim_{\substack{\mathbf{y}\in\Omega \\ \mathbf{y}\to\mathbf{x}}} \eta(\mathbf{y}),\quad \forall \mathbf{x} \in \Gamma_1.
\] 
BIEs \eqref{eq0908_23_}-\eqref{eq0908_25_} are established for the unknown densities $\varphi, \psi$ and $\phi$, whose discretized form can be solved by the generalized minimal residual (GMRES) method \cite{saad1986gmres}. After that, the numerical solutions for $\varphi, \psi$ and $\phi$ are utilized to get the approximation of $u_i$ and $u_e$ according to the solution representation formula, i.e., 
\begin{align} \label{eq0908_26}
&W_i^0[\varphi] - W_e^0[\varphi] + W_e^0[g] - V_i^0[\psi]- V_i^0[j]   \\\nonumber
&\qquad+ V_e^0[\psi] - V_e^1[\phi] + W_e^1[h] + Y_i[f_i] + Y_e[f_e]= \begin{cases}u_i & \text { in } \Lambda, \\ u_e & \text { in } \mathbf{N}. \end{cases}
\end{align}

\begin{remark}
In Paper \cite{xie2023fourth}, there is a detailed derivation of the boundary integral equation for a similar interface boundary value problem.
\end{remark}

\subsubsection{The Kernel-Free Boundary Integral Approach} \label{5 The Kernel-Free Boundary Integral Method}
Based on the analysis in Section \ref{5 The Boundary Integral Formulations}, the interface problem \eqref{eq0908_1}-\eqref{eq0908_5} can be addressed by first solving the boundary integral equation system \eqref{eq0908_23_}-\eqref{eq0908_25_} and subsequently deriving its numerical solution from formula \eqref{eq0908_26}. With the KFBI method, integrals in the boundary integral equations \eqref{eq0908_23_}-\eqref{eq0908_25_} and solution representation formula \eqref{eq0908_26} are computed by solving equivalent and simple interface problems in the extended domain $\mathcal{B}$. To evaluate $W_a^k[\varphi]$, $V_a^k[\psi]$ and $\mathcal{Y}_a[f]$ where $a \in \{i, e\}, k \in \{0, 1\}$, one may consider the equivalent simple interface problems. For instance, to evaluate $W_i^0[\varphi]$, $V_i^0[\psi]$, and $\mathcal{Y}_i[f]$, one can consider the following equivalent simple interface problems
\begin{empheq}[left=\empheqlbrace]{align} \Delta v - \kappa_i v = \mathbf{F} \quad & \text {in}\ \mathcal{B}, \label{eq0910_27}\\ [v] = \Phi \quad & \text {on}\ \Gamma_0, \label{eq0910_28}\\ \left[\frac{\partial v}{\partial\mathbf{n}}\right] = \Psi \quad & \text {on}\ \Gamma_0, \label{eq0910_29}\\ v = 0 \quad & \text {on}\ \partial \mathcal{B}, \label{eq0910_30} \end{empheq}
where the source term $\mathbf{F}$ and the interface conditions $\Phi$ and $\Psi$ are specified differently for each potential. The precise configurations are summarized in Table~\ref{tab0910_1} according to the potential theory \cite{hsiao2021boundary, steinbach2008numerical}.
\begin{table}[htbp]
\centering
\begin{tabular}{cccc}
\hline
Potential integral & $\mathbf{F}$ & $\Phi$ & $\Psi$ \\
\hline
$W_{i}^0[\varphi]$ 
  & $\mathbf{F} \equiv 0$ 
  & $\Phi = \varphi$ 
  & $\Psi \equiv 0$ \\
$-V_{i}^0[\psi]$ 
  & $\mathbf{F} \equiv 0$ 
  & $\Phi \equiv 0$ 
  & $\Psi = \psi$ \\
$\mathcal{Y}_i [f]$ 
  & $\mathbf{F}=\left\{
      \begin{array}{ll}
      f & \text{in } \Lambda \\
      0 & \text{in } \mathcal{B} \setminus \Lambda
      \end{array}
      \right.$
  & $\Phi \equiv 0$ 
  & $\Psi \equiv 0$ \\
\hline
\end{tabular}
\caption{Specification of the source term $\mathbf{F}$ and the two interface conditions $\Phi$ and $\Psi$ 
for the simple interface problem \eqref{eq0910_27}--\eqref{eq0910_30}.} 
\label{tab0910_1}
\end{table}

The interface problem \eqref{eq0910_27}--\eqref{eq0910_30} is considered `simple' in the sense that the PDE involves only constant coefficients and zero Dirichlet boundary conditions, and can therefore be solved in a relatively natural way using the finite difference method (FDM). The procedure consists of the following three steps:
\begin{enumerate}
    \item \textbf{Initial discretization:} 
    Apply a standard five-point finite difference scheme to discretize the governing equation \eqref{eq0910_27} on the Cartesian grid, without taking the interface $\Gamma$ into account.
    
    \item \textbf{Interface correction:} 
    Modify the finite difference scheme near the interface to reduce the large local truncation errors. 
    This correction only affects the right-hand side of the linear system obtained in Step (1).

    \item \textbf{Linear system solution:} 
    Solve the resulting linear system using a fast Fourier transform (FFT)-based elliptic solver to obtain the numerical solution on the grid nodes.
\end{enumerate}

\begin{remark} \label{rmk_convergence_KFBI}
The numerical solution obtained from the above procedure for system \eqref{eq0910_27}-\eqref{eq0910_30} achieves second-order accuracy with respect to the exact solution, i.e.
\[
\left\| v_h - v \right\|_{\infty} 
:= \max_{1 \leq i \leq N} \left| v_h(\mathbf{p}_i) - v(\mathbf{p}_i) \right|
= \mathcal{O}\!\left(h^2\right).
\]
Please note that paper \cite[Theorem 6]{ying2007kernel} states an essentially second-order accuracy, but the analysis there is carried out for the finite element discretization. In our case, we use a finite difference discretization, and a similar approach can be applied to show that the convergence order here is second order.
\end{remark}

These three steps constitute the standard procedure for evaluating the integrals in the solution representation formula \eqref{eq0908_26}. The evaluation of $W_i^1[\varphi]$, $V_i^1[\psi]$, $W_e^0[\varphi]$, $V_e^0[\psi]$, $W_e^1[\varphi]$, $V_e^1[\psi]$ are similar and the only thing worth noting is that $$\mathcal{Y}_e[f_e] = \int_{\mathbf{N}} G_e(\mathbf{x}, \mathbf{y}) \, f_e(\mathbf{y}) \, d \mathbf{y} = \int_{\Omega} G_e(\mathbf{x}, \mathbf{y}) \, f_e(\mathbf{y}) \, d \mathbf{y} - \int_{\Lambda} G_e(\mathbf{x}, \mathbf{y}) \, f_e(\mathbf{y}) \, d \mathbf{y}.$$
 
To solve the BIEs \eqref{eq0908_23_}-\eqref{eq0908_25_} with the GMRES method, both Dirichlet and Neumann boundary data on $\Gamma$ of the involved integrals are needed during each iteration. In the perspective of the corresponding equivalent interface problem \eqref{eq0910_27}--\eqref{eq0910_30}, we are required to evaluate the boundary value of the solution $v^+$ as well as its normal derivative $\partial_{\mathbf{n}} v^+ = \mathbf{n} \cdot \nabla v^+$ on the interface $\Gamma$, which are both in the sense of one-sided limit from the subdomain $\Omega_i$. Based on the approximate grid solution derived by the three-step procedure, a quadratic polynomial interpolation is employed therewith.

Implementation details for the steps of discretization, correction and interpolation are presented in \cite{ying2007kernel}.

\subsection{Obstacle Problems} \label{Obstacle Problems}
In this section, we formulate the obstacle problems, for which the
solution algorithms are developed in \cite{karkkainen2003augmented}. We consider the following problem:
\begin{align}
\min_{u \in H_0^1(\Omega)}\; & a(u, u)-2 l(u), \label{eq0914_1} \\
\text{s.t.} \quad & u(x) \leq \psi(x) \text{ a.e. } x \in \Omega, \label{eq0914_2}
\end{align}
where $a$ is a symmetric, $H_0^1$-elliptic bilinear form, $l$ is a linear form, and the function $\psi$ is given in $\Omega$ with $-\infty < \psi < \infty$ a.e. in $\Omega$. Discretization of problem \eqref{eq0914_1}-\eqref{eq0914_2} leads to the finite-dimensional obstacle problem
\begin{equation} \label{eq0914_3}
\min _{u \in C^n} \mathscr{J}(u)=\langle A u, u\rangle-2\langle f, u\rangle,
\end{equation}
where $n$ is the number of the discrete points in $\Omega$,
\begin{equation} \label{eq0914_4}
C^n=\left\{u \in \mathbb{R}^n: u_i \leq \psi_i \text { for all } i=1, \ldots, n\right\},
\end{equation}
$f, \psi \in \mathbb{R}^n$, and $\langle\cdot, \cdot\rangle$ denotes the usual Euclidean inner product in $\mathbb{R}^n$. We omit the derivation and reproduce the Primal–Dual Active Set Algorithm proposed in \cite{karkkainen2003augmented} as Algorithm \ref{alg0914}. 
\begin{algorithm}[htbp]
\caption{Primal--Dual Active Set Algorithm for the Unilateral Obstacle Problem}
\label{alg0914}
\begin{algorithmic}[1]
\State \textbf{Input:} Initial guess $u^0$, $\lambda^0 \ge 0$, parameter $c>0$.
\State \textbf{Output:} Approximate solution $u$ of the obstacle problem.

\State Set $k = 0$. Denote $\mathbf{N} := \{1, 2, ...,n\}$.
\Repeat
    \State Compute 
    \[
    \hat{\lambda}^k = \max \{0, \lambda^k + c (u^k - \psi)\}.
    \]
    \State Determine the active and inactive sets:
    \[
    J^k = \{ j \in \mathbf{N} : \hat{\lambda}^k_j > 0 \} \quad (\text{active}), \qquad
    I^k = \{ i \in \mathbf{N} : \hat{\lambda}^k_i = 0 \} \quad (\text{inactive}).
    \]
    \If{$k \ge 1$ and $J^k = J^{k-1}$}
        \State \textbf{break}; the solution is $u^k$.
    \EndIf
    \State Solve the linear system
    \begin{equation} \label{eq0914_11}
    Au + \lambda = f, \qquad 
    \lambda = 0 \text{ on } I^k, \qquad
    u = \psi \text{ on } J^k.
    \end{equation}
    \State Update
    \[
    u^{k+1} = u, \qquad 
    \lambda^{k+1} = \max\{0, \lambda\}, \qquad
    k \gets k+1.
    \]
\Until{convergence of active set}

\State \textbf{Remark:} The main computational task \eqref{eq0914_11} can be reduced to solving
\[
A_{II} u_I = f_I - A_{IJ}\psi_J,
\qquad 
u_J = \psi_J,
\qquad 
\lambda = f - Au,
\]
for \(I = I^k\) and \(J = J^k\). Here, \(A_{IJ}\) denotes the submatrix of \(A\) formed by selecting the rows indexed by \(I\) and the columns indexed by \(J\).

\end{algorithmic}
\end{algorithm}

\section{Proofs of Lemmas}

\subsection{Proof of Lemma \ref{lem1}} \label{proof_lemma1}
\begin{proof}
To solve Poisson equation 
\begin{equation} \label{eq1208_1}
\begin{aligned}
-\Delta u & = f, \quad && \text{in } \Omega, \\
u & = 0, \quad &&\text{on } \partial \Omega,
\end{aligned}
\end{equation}
by BI method, a particular solution satisfying the first equation of it should be found firstly. For this, KFBI method can be used to obtain a particular solution
\begin{equation} \label{eq1208_2}
u_p (\mathbf{x}) = \int_\Omega G_\mathcal{B}(\mathbf{x}, \mathbf{y}) f(\mathbf{y}) d\mathbf{y}, 
\end{equation}
where $G_\mathcal{B}$ is the Green's function for $- \Delta$ and rectangle $\mathcal{B}$ with $\Omega \subset\mathcal{B}$. Then
\begin{equation} \label{eq1208_3}
\|u_p - u_{p, h}\|_\infty \leq C_1 h^2, \quad \exists C_1 > 0,
\end{equation}
Note that $u_{p, h}$ is the numerical result of equation \eqref{eq1208_2} by KFBI method on the Cartesian grid of $\mathcal{B}$, that is, the numerical solution of \eqref{eq110910_27}--\eqref{eq110910_30} in \ref{Poisson Equations}. And $u_p$ is understood as being restricted to the grid points. This convention is adopted throughout the paper. Since the volume integral \eqref{eq1208_2} is continuous on $\mathcal{B}$, a stable polynomial interpolation scheme can be employed to obtain $u_{p, h}|_{\partial \Omega}$ \cite{ying2007kernel}, such that
\begin{equation} \label{eq1208_4}
\|u_{p}|_{\partial \Omega} - u_{p, h}|_{\partial \Omega}\|_\infty \leq C_2 h^2, \quad \exists C_2 > 0.
\end{equation}
Suppose $u_b$ is the solution of this homogeneous equation
\begin{equation} \label{eq1208_5}
\begin{aligned}
-\Delta u_b & = 0, \quad && \text{in } \Omega, \\
u_b & = - u_{p}|_{\partial \Omega}, \quad &&\text{on } \partial \Omega,
\end{aligned}
\end{equation}
and $\widetilde{u_b}$ is the solution of this homogeneous equation
\begin{equation} \label{eq1208_6}
\begin{aligned}
-\Delta \widetilde{u_b} & = 0, \quad && \text{in } \Omega, \\
\widetilde{u_b} & = - u_{p, h}|_{\partial \Omega}, \quad &&\text{on } \partial \Omega.
\end{aligned}
\end{equation}
Note that in equation \eqref{eq1208_6}, $u_{p,h}|_{\partial \Omega}$ can be regarded as a function on the boundary $\partial \Omega$ obtained by extending the values of the numerical solution at the discrete points. However, in practical numerical computations, only the values at these discrete points are required. Consider the difference
\[
w := u_b - \widetilde{u_b}.
\]
Then \(w\) satisfies the Laplace equation:
\begin{equation} \label{eq1208_8}
\begin{aligned}
-\Delta w & = 0, \quad && \text{in } \Omega,\\
w & = - u_{p}|_{\partial\Omega} + u_{p,h}|_{\partial\Omega}, \quad && \text{on } \partial\Omega.
\end{aligned}
\end{equation}
By the maximum principle for harmonic functions (or equivalently the classical \(L^\infty\)-stability of the Dirichlet problem for Laplace's equation on bounded domains with sufficiently regular boundary), the maximum of \(|w|\) in \(\overline{\Omega}\) is attained on \(\partial\Omega\). Hence
\begin{equation} \label{eq1208_9}
\|u_b - \widetilde{u_b}\|_{\infty}
= \|w\|_{\infty}
\leq \|- u_{p}|_{\partial\Omega} + u_{p,h}|_{\partial\Omega}\|_{\infty}
\le C_2 h^2
\end{equation}
by \eqref{eq1208_4}. Then we have 
\begin{equation} \label{eq1208_10}
\|u_b - \widetilde{u_b}_{,h}\|_{\infty} 
\leq \|u_b - \widetilde{u_b}\|_{\infty} + \|\widetilde{u_b} - \widetilde{u_b}_{,h}\|_{\infty}
\leq C_2 h^2 + C_3 h^2, \quad \exists C_3 > 0,
\end{equation}
where $\widetilde{u_b}_{,h}$ is the numerical solution of equation \eqref{eq1208_6}. Note that the estimate $\|\widetilde{u_b} - \widetilde{u_b}_{,h}\|_{\infty} \leq C_3 h^2$ is given by the second-order accuracy of Nyström discretization of the BI method.

Combining above and using the decomposition
\[
u = u_p + u_b,\qquad u_h := u_{p,h} + \widetilde{u_b}_{,h},
\]
we obtain the uniform error bound
\begin{equation} \label{eq1208_11}
\begin{aligned}
\|u - u_h\|_{\infty}
&\le \|u_p - u_{p,h}\|_{\infty} + \|u_b - \widetilde{u_b}_{,h}\|_{\infty}\\
&\le C_1 h^2 + C_2 h^2 + C_3 h^2 = C_4 h^2,
\end{aligned}
\end{equation}
where \(C_4 := C_1 + C_2 + C_3\) is independent of \(h\).
\end{proof}

\subsection{Proof of Lemma \ref{lem2}} \label{proof_lemma2}
\begin{proof}
Let \(G_\Omega(\mathbf{x},\mathbf{y})\) be the Green's function for \(-\Delta\) on \(\Omega\) with homogeneous Dirichlet boundary condition. Then the exact solution \(u\) of \eqref{eqq1} and the perturbed-solution \(\widetilde{u}\) of \eqref{eq1208_12} admit the volume potential representations
\[
u(\mathbf{x})=\int_\Omega G_\Omega(\mathbf{x},\mathbf{y})\,f(\mathbf{y})\,d\mathbf{y},\qquad
\widetilde{u}(\mathbf{x})=\int_\Omega G_\Omega(\mathbf{x},\mathbf{y})\,\widetilde{f}(\mathbf{y})\,d\mathbf{y}.
\]
Therefore their difference is
\[
u(\mathbf{x})-\widetilde{u}(\mathbf{x})
=\int_\Omega G_\Omega(\mathbf{x},\mathbf{y})\bigl(f(\mathbf{y})-\widetilde{f}(\mathbf{y})\bigr)\,d\mathbf{y}.
\]
For each fixed \(\mathbf{x}\in\Omega\) define
\[
M_\Omega(\mathbf{x}) := \int_\Omega \bigl|G_\Omega(\mathbf{x},\mathbf{y})\bigr|\,d\mathbf{y}.
\]
It is classical that for bounded \(\Omega\) with reasonable boundary regularity one has \(M_\Omega(\mathbf{x})<\infty\) for all \(\mathbf{x}\in\Omega\) and
\[
M_\Omega := \sup_{\mathbf{x}\in\Omega} M_\Omega(\mathbf{x}) < \infty.
\]
Using the representation and the uniform bound above, we obtain the \(L^\infty\)-estimate
\begin{equation} \label{eq1208_14}
\begin{aligned}
\|u - \widetilde{u}\|_{\infty}
&= \sup_{\mathbf{x}\in\Omega} \left|\int_\Omega G_\Omega(\mathbf{x},\mathbf{y})\,(f-\widetilde{f})(\mathbf{y})\,d\mathbf{y}\right| \\
&\le \sup_{\mathbf{x}\in\Omega} \int_\Omega \bigl|G_\Omega(\mathbf{x},\mathbf{y})\bigr|\,|f-\widetilde{f}|(\mathbf{y})\,d\mathbf{y} \\
&\le M_\Omega \,\|f-\widetilde{f}\|_{\infty}.
\end{aligned}
\end{equation}
Combining \eqref{eq1208_13} with \eqref{eq1208_14} yields the desired second-order bound
\begin{equation} \label{eq1208_15}
\|u - \widetilde{u}\|_{\infty} \le C_6 \, h^2,\qquad C_6 := M_\Omega\,C_f.
\end{equation}

Denote the numerical solution (from the BI\&KFBI solver) for the perturbed equation \eqref{eq1208_12} by $\widetilde{u}_h$. Recall that the solver is second-order as we shown before, that is
\begin{equation} \label{eq1208_16}
\|\widetilde{u} - \widetilde{u}_h\|_{\infty} \le C_7 h^2, \quad \exists C_7 > 0.
\end{equation}
Hence, we can obtain the estimate
\begin{equation} \label{eq1208_17}
\|u - \widetilde{u}_h\|_{\infty} \le C_8 h^2, \quad C_8 := C_6 + C_7,
\end{equation}
where $u$ is the exact solution of Poisson problem \eqref{eqq1} and $\widetilde{u}_h$ is the numerical solution given by BI\&KFBI solver of the perturbed equation \eqref{eq1208_12}.
\end{proof}

\section[Detailed Version of Proposition 3.1]{The Detailed Version of Proposition \ref{prop1}} \label{appendix_prop1}

\begin{proposition}[Pointwise velocity error for quadratic reconstruction] \label{prop11}
This proposition provides a fully detailed and pointwise version of Proposition~\ref{prop1},
making all regularity, stencil geometry, and locality assumptions explicit.

Let $\Omega$ be a bounded domain with smooth boundary and let $p$ be the exact pressure. Fix a boundary point $\mathbf{X}\in\partial\Omega$ and suppose
\begin{enumerate}
  \item $p$ is $C^{3}$ in a neighborhood of $\mathbf{X}$.
  
  \item The numerical solver produces values $p_h$ at Cartesian grid nodes in a neighborhood $B(\mathbf{X},Kh)$ such that
  \[
  \max_{\mathbf{y}\in\mathcal{G}\cap B(\mathbf{X},Kh) \cap \Omega} |p_h(\mathbf{y}) - p(\mathbf{y})| \le C_p h^2,
  \]
  where $\mathcal{G}$ denotes the Cartesian grid and $K>0$ is fixed.
  
  \item The reconstruction stencil $\{\mathbf{y}_j\}_{j=1}^{N_s}\subset B(\mathbf{X},Kh)$ satisfies $N_s\ge 6$ and the following geometric nondegeneracy condition:
    \begin{itemize}
        \item There exists a constant $c_0>0$, independent of $h$ such that the convex hull of $\{\mathbf{y}_j\}$ contains a disk of radius $c_0 h$.
    
        \item Equivalently, writing the scaled coordinates
        \[
            \widetilde{\mathbf y}_j = \frac{\mathbf y_j - \mathbf X}{h},
        \]
        the scaled design matrix
        \[
        \widetilde A =
        \begin{pmatrix}
        1 & \widetilde x_1 & \widetilde y_1 &
          \widetilde x_1^{\,2} & \widetilde x_1\widetilde y_1 & \widetilde y_1^{\,2} \\
        \vdots & \vdots & \vdots & \vdots & \vdots & \vdots \\
        1 & \widetilde x_{N_s} & \widetilde y_{N_s} &
          \widetilde x_{N_s}^{\,2} & \widetilde x_{N_s}\widetilde y_{N_s} & \widetilde y_{N_s}^{\,2}
        \end{pmatrix}
        \]
        satisfies the uniform conditioning bound
        \[
            \lambda_{\min}(\widetilde A^\top \widetilde A) \ge \widetilde{c} > 0,
        \]
        where $\widetilde{c}$ is a constant independent of $h$.
    \end{itemize}
    
    This condition is experimentally satisfied on a uniform Cartesian grid, provided that the boundary $\partial\Omega$ has no geometric features too sharply curved to be resolved on the scale $Kh$.
\end{enumerate}

Denote by $\nabla p_{h,\mathrm{rec}}(\mathbf{X})$ the gradient obtained from the local quadratic reconstruction evaluated at the boundary point $\mathbf{X}$. Then the gradient obtained from the quadratic least--squares reconstruction based on the noisy data $p_h$ satisfies
\[
\bigl|\nabla p(\mathbf{X}) - \nabla p_{h,\mathrm{rec}}(\mathbf{X})\bigr| \le C h,
\]
for some constant $C$ independent of $h$ and the selection of reconstruction stencils. Consequently, the computed normal velocity satisfies
\[
|v(\mathbf{X}) - v_h(\mathbf{X})| \le C h.
\]
\end{proposition}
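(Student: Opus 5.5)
The plan is to split the reconstruction error into a consistency part (the exact $p$ is not quadratic) and a stability part (the data $p_h$ carry $\mathcal{O}(h^2)$ noise), and to bound both in scaled coordinates, where the design matrix $\widetilde A$ is uniformly well conditioned by hypothesis 3.

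\textbf{Setup.} Write the least-squares quadratic in scaled form
\[
q(\mathbf{y}) = \sum_{|\alpha|\le 2} \widetilde{a}_\alpha \Bigl(\tfrac{\mathbf{y}-\mathbf{X}}{h}\Bigr)^{\alpha},
\]
so that the coefficient vector is $\widetilde{\mathbf a} = (\widetilde A^\top\widetilde A)^{-1}\widetilde A^\top \mathbf{d}$, where $\mathbf{d}_j = p_h(\mathbf{y}_j)$. The reconstructed gradient at $\mathbf{X}$ is $\nabla p_{h,\mathrm{rec}}(\mathbf{X}) = h^{-1}(\widetilde a_{(1,0)},\widetilde a_{(0,1)})$. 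Hypothesis 3 gives
\[
\|(\widetilde A^\top\widetilde A)^{-1}\widetilde A^\top\|_2 \le \widetilde c^{-1/2},
\]
independently of $h$. Since the scaled points satisfy $|\widetilde{\mathbf y}_j|\le K$, every entry of $\widetilde A$ is bounded by a constant depending only on $K$.

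\textbf{Consistency.} Let $T$ be the second-order Taylor polynomial of $p$ at $\mathbf{X}$. By hypothesis 1, $|p(\mathbf{y}_j)-T(\mathbf{y}_j)|\le C\|p\|_{C^3}(Kh)^3$ for every stencil point. In scaled coordinates $T$ has coefficients $h\,\partial_x p(\mathbf{X})$ and $h\,\partial_y p(\mathbf{X})$ in the linear slots. Least squares reproduces quadratics exactly, because $\widetilde A$ has full column rank. Hence, if $\mathbf{d}^{\mathrm{ex}}_j=p(\mathbf{y}_j)$, the fitted coefficients differ from those of $T$ by at most
\[
\widetilde c^{-1/2}\sqrt{N_s}\,C K^3 h^3 .
\]
After dividing by $h$, the gradient error from this part is $\mathcal{O}(h^2)$.

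\textbf{Stability.} The perturbation $\mathbf{d}-\mathbf{d}^{\mathrm{ex}}$ has entries bounded by $C_p h^2$, using hypothesis 2 at stencil nodes in $\Omega$. Its effect on the coefficients is therefore at most $\widetilde c^{-1/2}\sqrt{N_s}\,C_p h^2$, and after dividing by $h$ this contributes $\mathcal{O}(h)$ to the gradient. This is the dominant term and gives the rate claimed in Proposition~\ref{prop11}.

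Adding the two parts yields
\[
|\nabla p(\mathbf{X})-\nabla p_{h,\mathrm{rec}}(\mathbf{X})|\le Ch.
\]
The constant $C$ depends only on $K$, $N_s$, $\widetilde c$, $C_p$ and $\|p\|_{C^3}$, and not on the particular stencil. The velocity bound then follows directly:
\[
|v-v_h| = |(\nabla p-\nabla p_{h,\mathrm{rec}})\cdot\mathbf n| \le |\nabla p-\nabla p_{h,\mathrm{rec}}|,
\]
since $|\mathbf n|=1$ and the same exact normal is used in both.

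\textbf{Expected obstacle.} The delicate points are the uniformity of the constants and the regularity near the boundary. Stencil points may lie slightly outside $\Omega$, and $p$ must be $C^3$ on the whole ball $B(\mathbf X,Kh)$. Hypothesis 1 supplies this regularity, and hypothesis 2 is read as restricting the stencil to nodes where the $C_p h^2$ bound holds, or equivalently as assuming that the error bound also covers the extended values. I would state this restriction explicitly. The conditioning assumption then removes any dependence on how the boundary cuts the grid.
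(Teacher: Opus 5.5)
Your proof is correct and follows essentially the same route as the paper. Both arguments write the least-squares coefficient error in scaled coordinates as $D^{-1}(\widetilde A^\top\widetilde A)^{-1}\widetilde A^\top(r+\delta)$, bound the Taylor remainder by $\mathcal{O}(h^3)$ and the data noise by $\mathcal{O}(h^2)$, and then divide the linear coefficients by $h$ to get the $\mathcal{O}(h)$ gradient error. Your only refinements are to separate the two contributions, which shows the truncation part is actually $\mathcal{O}(h^2)$, and to make the constant explicit via $\|(\widetilde A^\top\widetilde A)^{-1}\widetilde A^\top\|_2\le \widetilde c^{-1/2}$; the paper leaves both implicit.
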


\begin{proof}
We present a matrix-based estimate separating truncation (Taylor) errors and data errors, and using a standard scaling argument to control the conditioning of the least--squares fit.

\medskip

\noindent\textbf{1. Local coordinates and quadratic model.}
Translate coordinates so that $\mathbf{X}$ is the origin and write local coordinates $\mathbf{x}=(x,y)$.  Consider the quadratic polynomial model
\[
q(\mathbf{x}) = a_0 + a_1 x + a_2 y + a_3 x^2 + a_4 xy + a_5 y^2,
\]
with coefficient vector \(a = (a_0,a_1,\dots,a_5)^\top\in\mathbb{R}^6\). The gradient at the origin is given by
\[
\nabla q(0) = (a_1,a_2)^\top.
\]

\medskip

\noindent\textbf{2. Design matrix and data.}
Let the reconstruction use $N_s$ nearby grid points $\mathbf{y}_j=(x_j,y_j)$, $j=1,\dots,N_s$, with $|x_j|,|y_j| = \mathcal{O}(h)$. Define the (unscaled) design matrix
\[
A := \begin{pmatrix}
1 & x_1 & y_1 & x_1^2 & x_1 y_1 & y_1^2 \\
\vdots & \vdots & \vdots & \vdots & \vdots & \vdots \\
1 & x_{N_s} & y_{N_s} & x_{N_s}^2 & x_{N_s} y_{N_s} & y_{N_s}^2
\end{pmatrix} \in\mathbb{R}^{N_s\times 6},
\]
and the observation vector $b\in\mathbb{R}^{N_s}$ with entries $b_j = p_h(\mathbf{y}_j)$.

The (noiseless) Taylor expansion of $p$ around the origin gives
\[
p(\mathbf{y}_j) = \Phi(\mathbf{y}_j) a^\star + r_j,
\]
where $\Phi(\mathbf{y}_j) := (1, x_j, y_j, x_j^2, x_j y_j, y_j^2)$, $a^\star$ are the exact quadratic coefficients for the second-order Taylor polynomial, and the remainder satisfies
\begin{equation} \label{eq1208_888}
|r_j| \le C_R h^3
\end{equation}
because $p\in C^3$ and $|\mathbf{y}_j|=\mathcal{O}(h)$.

The numerical (noisy) data are
\[
b_j = p_h(\mathbf{y}_j) = p(\mathbf{y}_j) + \delta_j
= \Phi(\mathbf{y}_j) a^\star + r_j + \delta_j,
\]
with the data error bound 
\begin{equation} \label{eq1208_999}
|\delta_j| \le C_p h^2
\end{equation}
by assumption. Write vector forms
\[
b = A a^\star + r + \delta,
\]
where $r=(r_j)_{j=1}^{N_s}$ and $\delta=(\delta_j)_{j=1}^{N_s}$.

\medskip

\noindent\textbf{3. Least--squares solution and coefficient error.}
The least--squares (normal equations) solution is
\[
\widehat a = (A^\top A)^{-1} A^\top b.
\]
Thus the coefficient error satisfies
\[
\widehat a - a^\star = (A^\top A)^{-1} A^\top (r + \delta).
\]

\medskip

\noindent\textbf{4. Scaling.}
The columns of $A$ have different $h$--scalings: the $k$-th column scales like $h^{\alpha_k}$ with $\alpha=(0,1,1,2,2,2)$. To isolate $h$ dependence, introduce the diagonal scaling matrix
\[
D := \operatorname{diag}\!\bigl(1,\; h,\; h,\; h^2,\; h^2,\; h^2\bigr),
\]
and write $A = \widetilde A D$, where the scaled design matrix $\widetilde A$ has \(j\)-th row
\[
\widetilde\Phi(\mathbf{y}_j) = \bigl(1,\; x_j/h,\; y_j/h,\; x_j^2/h^2,\; (x_j y_j)/h^2,\; y_j^2/h^2\bigr).
\]
Because $|x_j|,|y_j|=\mathcal{O}(h)$, the entries of $\widetilde A$ are $\mathcal{O}(1)$ and, by the stencil nondegeneracy assumption (item 3), the matrix $\widetilde A^\top \widetilde A$ has eigenvalues bounded above and below by positive constants independent of $h$. Consequently,
\[
\|(\widetilde A^\top \widetilde A)^{-1}\|_\infty \le \frac{1}{\widetilde{c}},
\qquad
\|\widetilde A\|_\infty \le C_{\widetilde A},
\]
with constants independent of $h$. Now
\[
A^\top A = D \widetilde A^\top \widetilde A D,
\]
so
\[
(A^\top A)^{-1} A^\top = D^{-1} (\widetilde A^\top \widetilde A)^{-1} \widetilde A^\top.
\]
Hence
\[
\widehat a - a^\star = (A^\top A)^{-1} A^\top (r + \delta) = D^{-1} (\widetilde A^\top \widetilde A)^{-1} \widetilde A^\top (r + \delta).
\]
Combining the property of $\widetilde A$ with the estimates \eqref{eq1208_888}, \eqref{eq1208_999}, we have
\begin{equation}
[(\widetilde A^\top \widetilde A)^{-1} \widetilde A^\top (r + \delta)]_j = \mathcal{O}(h^2), \quad \forall j = 1, ..., 6,
\end{equation}
which implies that
\begin{equation} \label{eq1208_8888}
\begin{aligned}
\widehat a - a^\star & = D^{-1} (\widetilde A^\top \widetilde A)^{-1}
\widetilde A^\top (r + \delta) \\
& = \text{diag}(1, h^{-1}, h^{-1}, h^{-2}, h^{-2},h^{-2}) [(\widetilde A^\top \widetilde A)^{-1} \widetilde A^\top (r + \delta)] \\
& = (\mathcal{O}(h^2), \mathcal{O}(h), \mathcal{O}(h), \mathcal{O}(1), \mathcal{O}(1), \mathcal{O}(1))^\top.
\end{aligned}
\end{equation}

\medskip

\noindent\textbf{5. Velocity error.}
Because the gradient at the origin is exactly the pair $(a_1,a_2)$,
\[
\bigl|\nabla p(0) - \nabla p_{h,\mathrm{rec}}(0)\bigr|
= |(a_1,a_2) - (\widehat a_1,\widehat a_2)|
= \mathcal{O}(h).
\]

Finally, since $v = -\nabla p\cdot\mathbf{n}$ and $v_h = -\nabla p_{h,\mathrm{rec}}\cdot\mathbf{n}$ (the normal $\mathbf{n}$ is assumed known to the same or higher order), we have
\[
|v(\mathbf{X}) - v_h(\mathbf{X})| = \mathcal{O}(h),
\]
which completes the proof.
\end{proof}

\begin{remark} \label{rmk33}
\begin{itemize}
\item The geometric condition in item~(iii) of Proposition~\ref{prop11} can be 
numerically validated. In a typical experiment, one fixes a reconstruction stencil $\{\mathbf y_j\}_{j=1}^{N_s}$ with $|\mathbf{y}_i - \mathbf{y}_j| = \mathcal{O}(1),\ i, j = 1, ..., N_S, i \neq j$, and samples $\mathbf X$. For each sample point $\mathbf X$, one forms the normalized coordinates $\widetilde{\mathbf y}_j = (\mathbf y_j - \mathbf X)$ and constructs the scaled matrix $\widetilde A$.  The smallest singular value 
$\sigma_{\min}(\widetilde A)$ or, equivalently, the smallest eigenvalue of 
$\widetilde A^\top \widetilde A$, is then recorded. Figure~\ref{fig1209} illustrates the behavior of several commonly used reconstruction stencils, where the blue circle indicates the sampling region for the point $\mathbf X$. One observes that, for each stencil, the quantity $\sigma_{\min}(\widetilde A)$ does not approach zero even when $\mathbf X$ moves. This confirms that the scaled design matrix retains a uniform, $h$–independent lower bound on its smallest singular value, in precisely the sense required by item~(iii) of Proposition~\ref{prop11}. In fact, we have tested a larger collection of stencils, and all results consistently satisfy the geometric nondegeneracy requirement stated in item~(iii) of Proposition \ref{prop11}.
\begin{figure}[htbp]
\centering
\begin{subfigure}{0.23\textwidth}
\centering
\includegraphics[width=\linewidth]{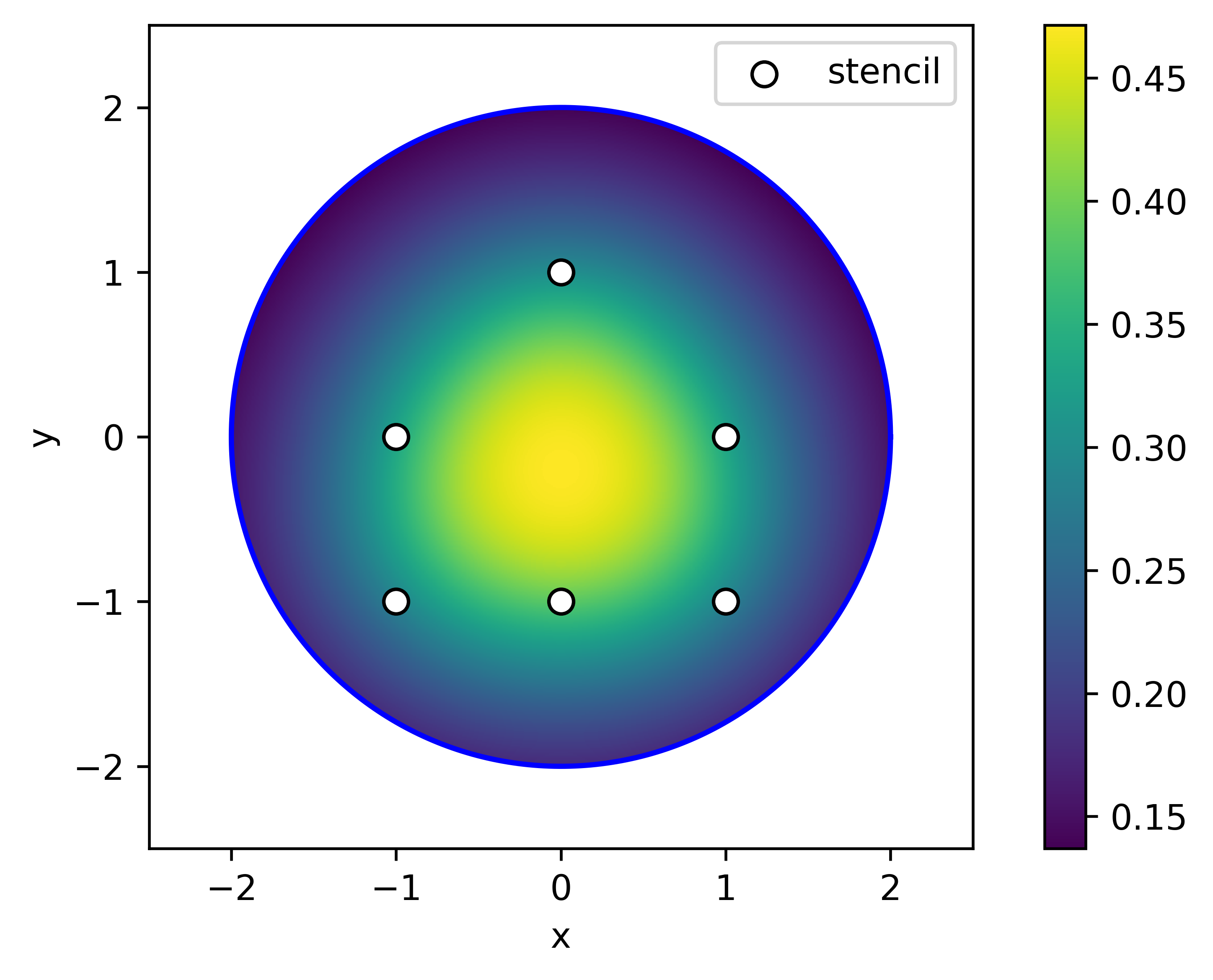}
\end{subfigure}
\begin{subfigure}{0.23\textwidth}
\centering
\includegraphics[width=\linewidth]{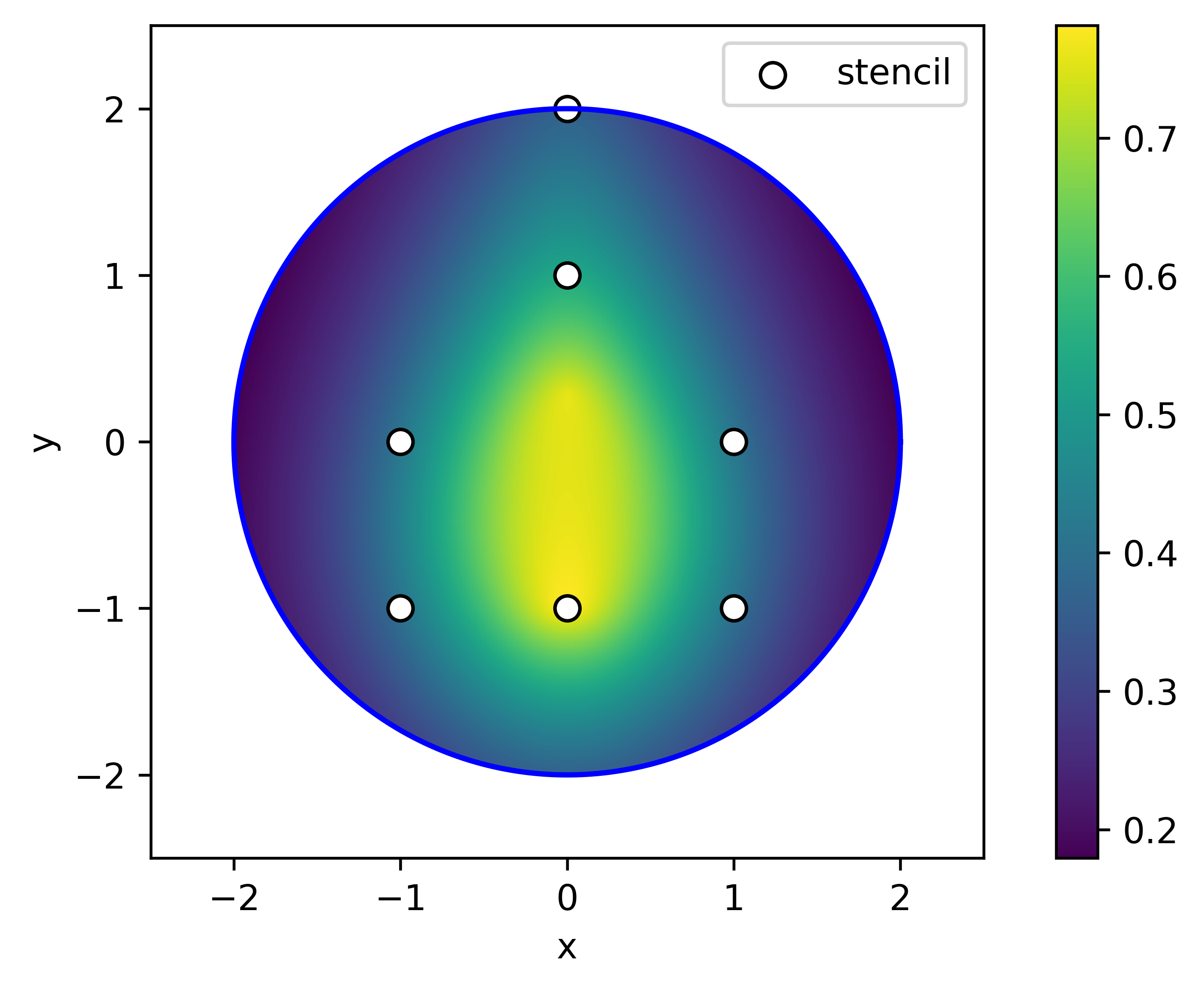}
\end{subfigure}
\begin{subfigure}{0.23\textwidth}
\centering
\includegraphics[width=\linewidth]{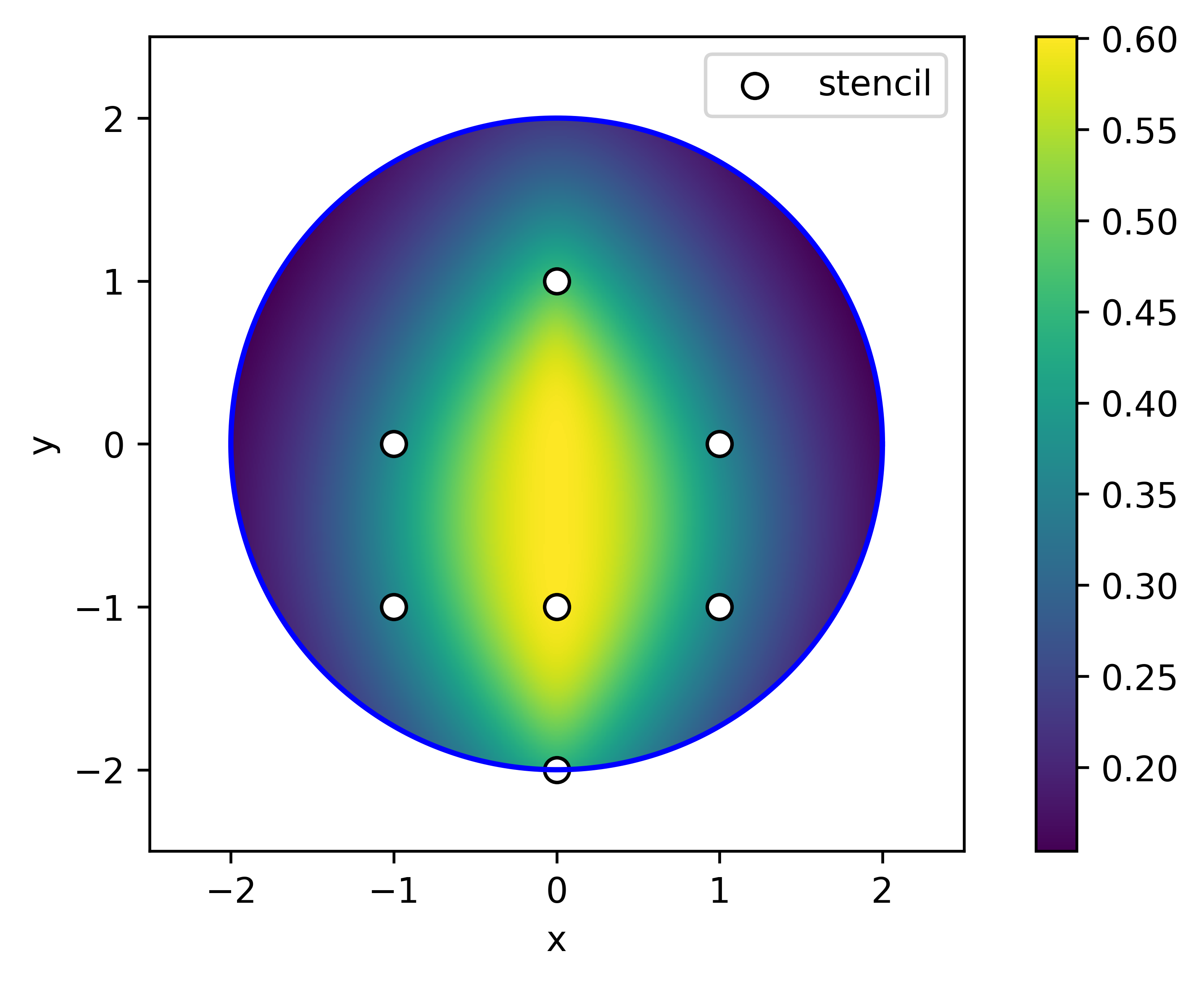}
\end{subfigure}
\begin{subfigure}{0.23\textwidth}
\centering
\includegraphics[width=\linewidth]{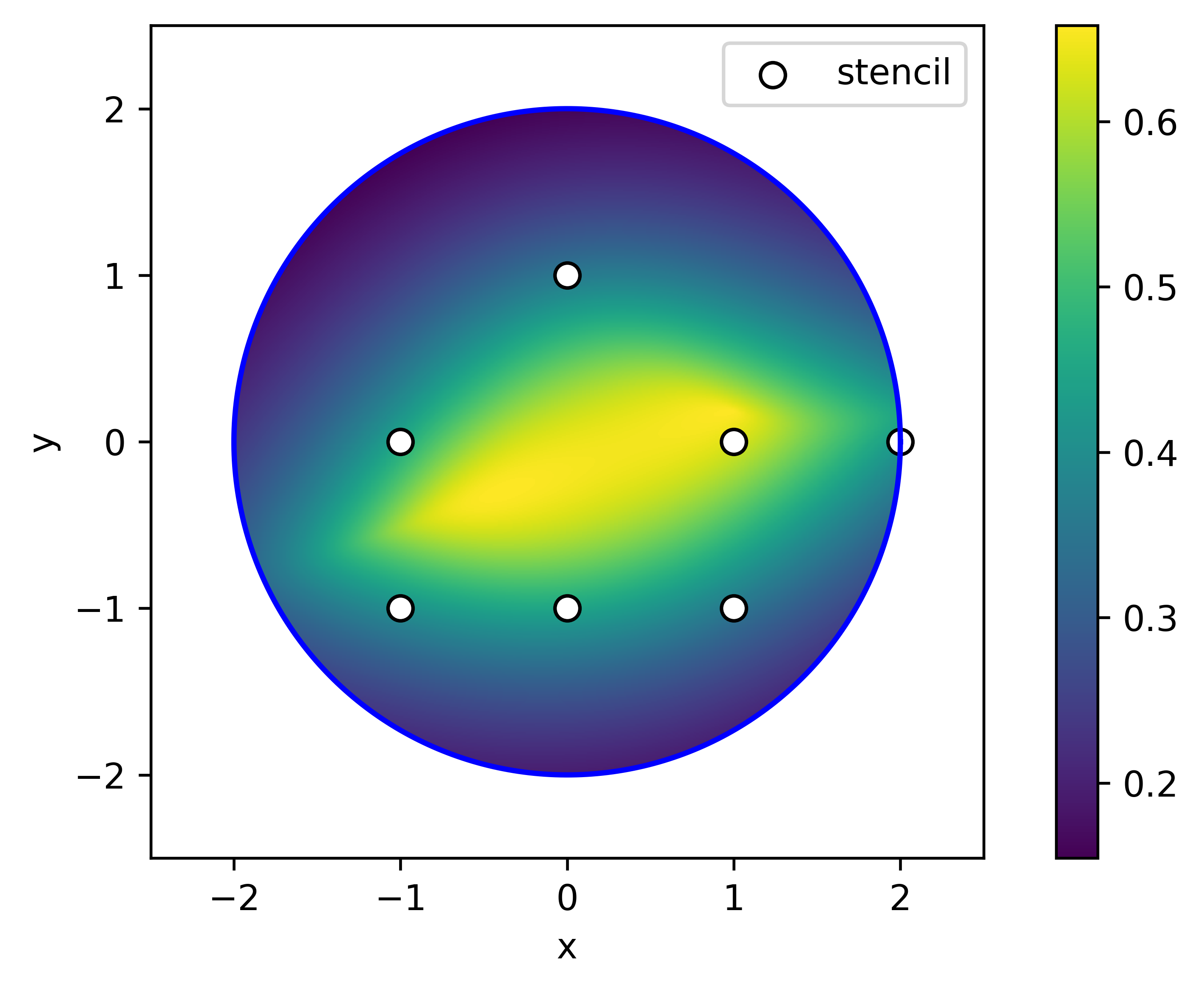}
\end{subfigure}
\begin{subfigure}{0.23\textwidth}
\centering
\includegraphics[width=\linewidth]{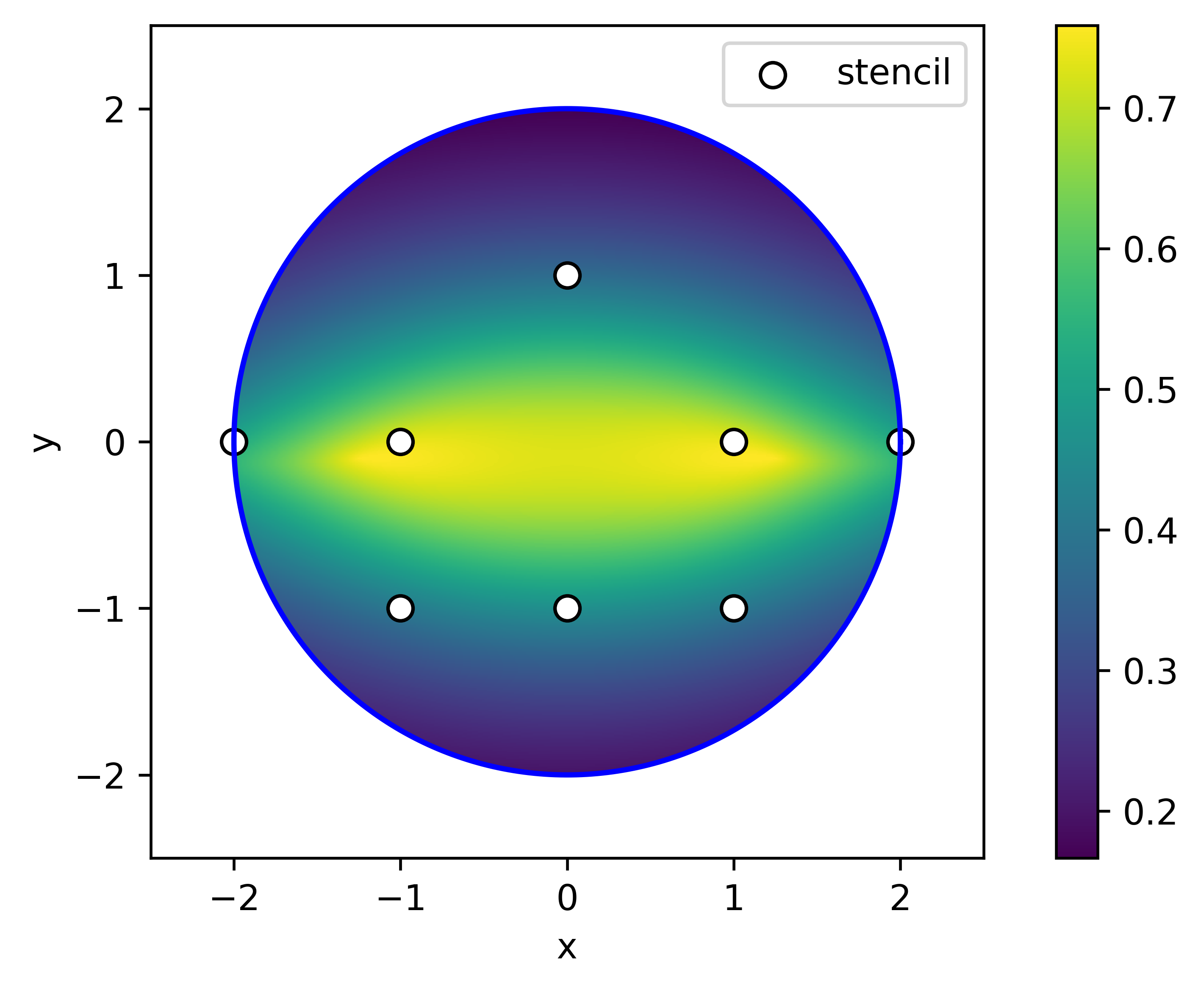}
\end{subfigure}
\begin{subfigure}{0.23\textwidth}
\centering
\includegraphics[width=\linewidth]{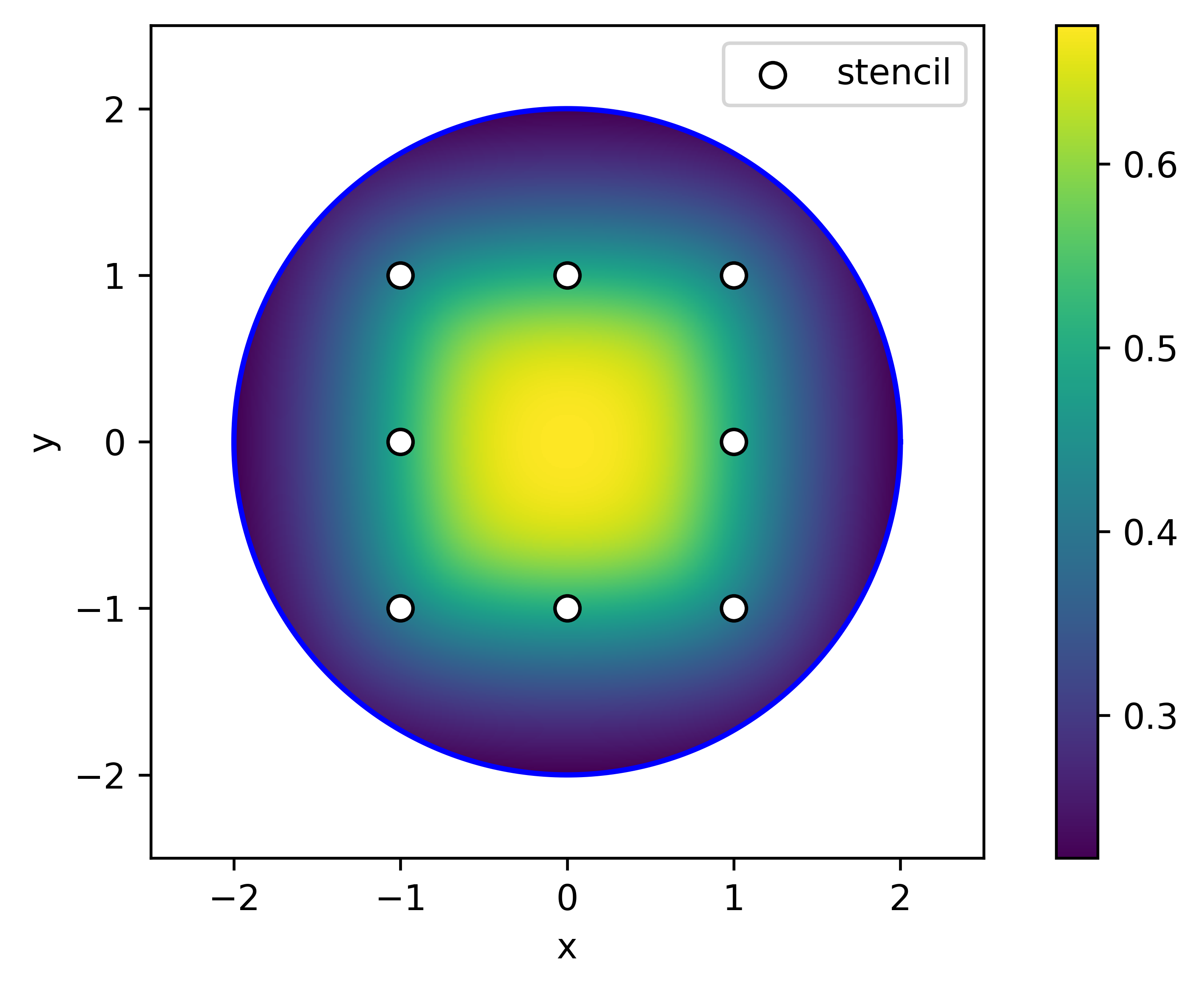}
\end{subfigure}
\begin{subfigure}{0.23\textwidth}
\centering
\includegraphics[width=\linewidth]{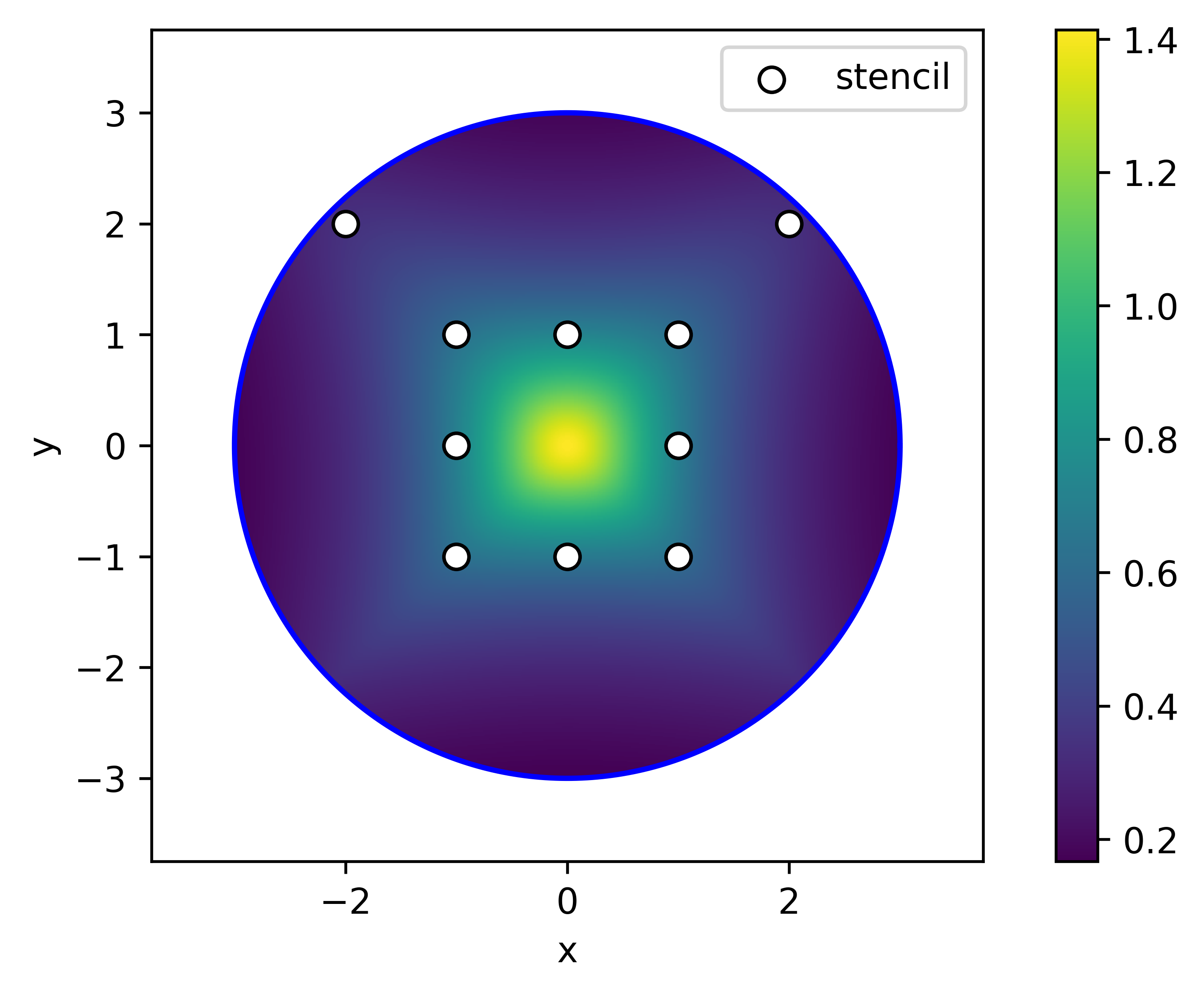}
\end{subfigure}
\begin{subfigure}{0.23\textwidth}
\centering
\includegraphics[width=\linewidth]{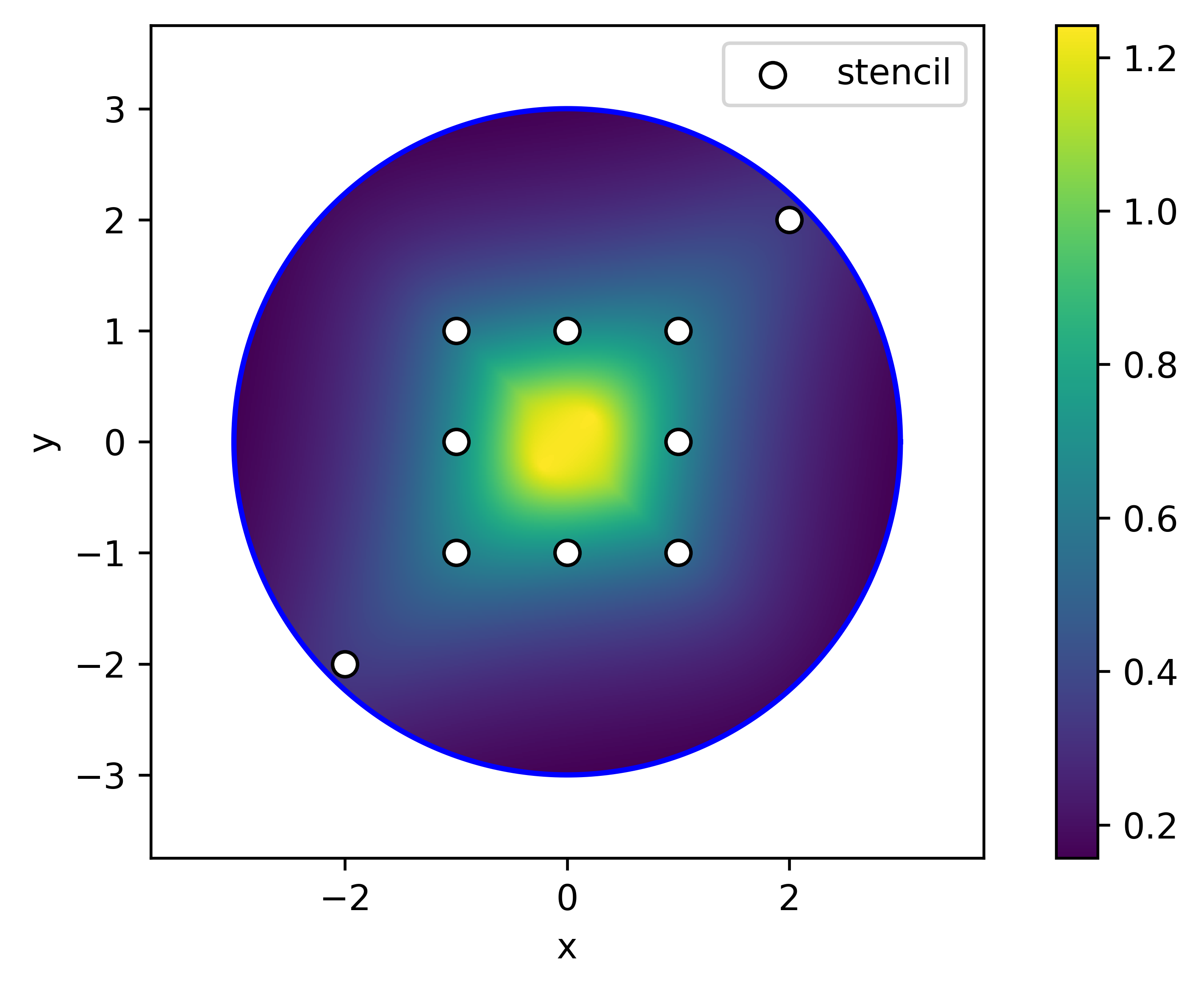}
\end{subfigure}
\caption{Visualization of $\sigma_{\min}(\widetilde A)$ of several reconstruction stencils.}
\label{fig1209}
\end{figure}

\item The constant $C$ in Proposition~\ref{prop11} is a local constant: it may in principle depend on the boundary point $\mathbf{X}$ through the local geometry of $\partial\Omega$. However, if the boundary $\partial\Omega$ is smooth and has no  regions where the curvature varies too rapidly, and if $p$ is
$C^3$ in a fixed tubular neighborhood of $\partial\Omega$, then all such local quantities remain uniformly bounded along the boundary. Under these mild and commonly satisfied assumptions, the constants appearing in the proof admit a global upper bound independent of the choice of $\mathbf{X} \in \partial\Omega$.  Consequently, the error estimate in Proposition \ref{prop11} holds uniformly for all boundary points, that is,
\[
||\nabla p - \nabla p_{h,\mathrm{rec}}||_\infty = \mathcal{O}(h), \quad
||v - v_h||_\infty = \mathcal{O}(h).
\]
\end{itemize}
\end{remark}

\section{Analytical radially symmetric solution} \label{appendix:exact_R0R1}
In this section, we derive the analytical radially symmetric solution for all stages of the model \eqref{eq101010} coupled with \eqref{eqn:nutrient_model} (with $G(c)$ given by \eqref{eqn:G2}), which serves as a benchmark for validating our numerical method. 
Under the radially symmetric assumption, we adopt polar coordinates and assume that all quantities depend only on the radial variable $r$. In this case, the Laplacian operator reduces to
\[
-\Delta u(r)
= -\frac{1}{r}\frac{\partial}{\partial r}
\left( r \frac{\partial u}{\partial r} \right).
\]

\subsection{Before formation of the necrotic core}
Prior to necrotic core formation, the tumor possesses only an outer boundary $\Gamma_1(t)$, and the solution of \eqref{eq101010}--\eqref{eqn:nutrient_model} remains in the form of a patch solution. Specifically, the cell density evolves as a characteristic function
\[
\rho(x,t)=\chi_{D(t)}(x),
\]
where the tumor region $D(t)=\{\rho>0\}$ coincides with the saturated region $S(t)=\{\rho=1\}$.

In the radially symmetric setting, the outer boundary is given by
\[
\Gamma_1(t)=\partial \mathcal{B}_{R_1(t)},
\]
and hence
\[
D(t)=S(t)=\mathcal{B}_{R_1(t)},
\]
where $\mathcal{B}_R$ denotes the disk centered at the origin with radius $R$. The evolution of the tumor radius $R_1(t)$ is governed by an ordinary differential equation; see \eqref{eqn:boundaryspeed1} below.

In this regime, for each fixed time $t$, the solution of \eqref{eq101010}--\eqref{eqn:nutrient_model} coincides with that of the following PDE system:
\begin{subequations}
\begin{alignat}{2}
\label{eqn:pressure_stage1}
- \frac{1}{r}\frac{\partial}{\partial r }
\left( r \frac{\partial p}{\partial r } \right)
&= G_0 (c-\bar{c}), 
&\qquad r < R_1(t),\\
- \frac{1}{r}\frac{\partial}{\partial r }
\left( r \frac{\partial c}{\partial r } \right)
+\lambda c
&= 0,
& r < R_1(t),\\
p &= 0,
& r = R_1(t),\\
c &= c_B,
& r = R_1(t).
\end{alignat}
\end{subequations}

Solving the above system yields
\begin{subequations}
\begin{align}
c(r,t)
&= \frac{c_B}{I_0(\sqrt{\lambda}R_1(t))}
I_0(\sqrt{\lambda}r),\\
p(r,t)
&= -\frac{G_0 c_B}{\lambda}
\frac{I_0(\sqrt{\lambda}r)}{I_0(\sqrt{\lambda}R_1(t))}
+ \frac{G_0\bar{c}}{4} r^2
+ \left(
\frac{G_0 c_B}{\lambda}
- \frac{G_0\bar{c}R_1(t)^2}{4}
\right).
\end{align}
\end{subequations}
Here $I_j$, $j=0,1,2,\dots$, denote the modified Bessel functions of the \emph{first kind}.

The evolution of $R_1(t)$ then follows from Darcy’s law:
\begin{equation}
\label{eqn:boundaryspeed1}
\frac{d R_1(t)}{dt}
= -\frac{\partial p}{\partial r}\bigl(R_1(t)\bigr)
= G_0\left(
\frac{c_B}{\sqrt{\lambda}}
\frac{I_1(\sqrt{\lambda}R_1(t))}
{I_0(\sqrt{\lambda}R_1(t))}
- \frac{\bar{c}}{2} R_1(t)
\right).
\end{equation}

According to the behavior of the solution, this growth period can be further divided into two distinct stages.

\paragraph{Stage 1}
When the tumor is sufficiently small, the nutrient concentration satisfies
$c(r,t)>\bar{c}$ for all $r<R_1(t)$. Consequently, the right-hand side of
\eqref{eqn:pressure_stage1} is strictly positive, and the pressure
$p(r,t)$ remains convex throughout the tumor domain. This stage terminates
at time $T_1^*$, when the tumor radius reaches a critical value
$R^*=R_1(T_1^*)$ determined by
\begin{equation}
c(0,T_1^*)
=\frac{c_B}{I_0(\sqrt{\lambda}R^*)}
=\bar{c}.
\end{equation}

\paragraph{Stage 2}
When the tumor radius exceeds $R^*$ but remains below another threshold
$R^{**}$, implicitly defined by \eqref{eqn:R_threshold2}, the nutrient
concentration becomes smaller than $\bar{c}$ near the tumor center. As a
result, the pressure $p(r,t)$ is no longer strictly convex, and $p(0,t)$
decreases over time until it reaches zero. Let $T_2^*$ denote this time and
$R^{**}=R_1(T_2^*)$ the corresponding radius. Then $R^{**}$ satisfies
\begin{equation}
\label{eqn:R_threshold2}
p(0,T_2^{*})=-\frac{G_0 c_B}{\lambda I_0(\sqrt{\lambda}R^{**})}
+ \left(
\frac{G_0 c_B}{\lambda}
- \frac{G_0\bar{c}(R^{**})^2}{4}
\right)
= 0.
\end{equation}

\subsection{After formation of the necrotic core}
\label{sec: After formation of the necrotic core}

When the tumor radius exceeds $R^{**}$, due to the obstacle constraint
($p\ge 0$), the pressure at the center $p(0,t)$ can no longer decrease.
As a consequence, a necrotic core starts to form. As before, we denote
the necrotic region by $N(t)$ and its boundary by $\Gamma_0(t)$. In the
radially symmetric setting, the necrotic core also takes the form of a
disk, and we denote its radius by $R_0(t)$. Accordingly,
\begin{subequations}
\begin{alignat}{2}
&N(t)=\{\,0\le r<R_0(t)\,\}, \qquad
S(t)=\{\,R_0(t)<r<R_1(t)\,\}, \\
&\Gamma_0(t)=\partial\mathcal{B}_{R_0(t)}, \qquad\qquad \qquad
\Gamma_1(t)=\partial\mathcal{B}_{R_1(t)}.
\end{alignat}
\end{subequations}

We emphasize that at each time $t$, the radius of the necrotic core
$R_0(t)$ is determined implicitly through an equation of the form
$F(R_0,R_1)=0$; see \eqref{eq_exact_10} below.

With the above setting, the solution of
\eqref{eq101010}--\eqref{eqn:nutrient_model} coincides with that of the PDE system
\eqref{eqn:PDE-OP}--\eqref{eq0916_3}. For each fixed time $t$,
the nutrient concentration $c(r,t)$ can be solved explicitly from
\eqref{eq0916_3}. In particular, the two equations in \eqref{eq0916_3}
imply that $c(r,t)$ admits the following form:
\begin{equation}
\label{eq_exact_1}
c(r,t)=
\begin{cases}
a_0(t)\, I_0(\sqrt{\lambda n_c}\, r), 
& 0<r<R_0(t),\\[2mm]
b_0(t)\, I_0(\sqrt{\lambda}\, r)
+ b_1(t)\, K_0(\sqrt{\lambda}\, r),
& R_0(t)<r<R_1(t),\\[2mm]
c_B, & r=R_1(t),
\end{cases}
\end{equation}
where $I_j$ and $K_j$, $j=0,1,2,\dots$, denote the modified Bessel
functions of the first and second kinds, respectively. The coefficients
$a_0(t)$, $b_0(t)$, and $b_1(t)$ are to be determined.

The interface and boundary conditions in \eqref{eq0916_3} yield
\begin{subequations}
\begin{alignat}{2}
c(R_0^-)=c(R_0^+)
&\ \Longrightarrow\
a_0 I_0(\sqrt{\lambda n_c}R_0)
= b_0 I_0(\sqrt{\lambda}R_0)
+ b_1 K_0(\sqrt{\lambda}R_0),
\label{eq_exact_2}\\
c_r(R_0^-)=c_r(R_0^+)
&\ \Longrightarrow\
a_0 \sqrt{n_c}\, I_1(\sqrt{\lambda n_c}R_0)
= b_0 I_1(\sqrt{\lambda}R_0)
- b_1 K_1(\sqrt{\lambda}R_0),
\label{eq_exact_3}\\
c(R_1)=c_B
&\ \Longrightarrow\
b_0 I_0(\sqrt{\lambda}R_1)
+ b_1 K_0(\sqrt{\lambda}R_1)
= c_B.
\label{eq_exact_4}
\end{alignat}
\end{subequations}

Solving the above system, the coefficients $a_0(t)$, $b_0(t)$, and
$b_1(t)$ can be expressed in terms of $R_0(t)$ and $R_1(t)$ as
\begin{subequations}
\label{eqn:a0-b1}
\begin{align}
a_0(t)
&=\frac{b_0(t) I_0(\sqrt{\lambda}R_0(t))
+ b_1(t) K_0(\sqrt{\lambda}R_0(t))}
{I_0(\sqrt{n_c\lambda}R_0(t))},\\
b_0(t)
&=\frac{c_B}
{I_0(\sqrt{\lambda}R_1(t))
+ L(\lambda,n_c,R_0(t))K_0(\sqrt{\lambda}R_1(t))},\\
b_1(t)
&=b_0(t)\,L(\lambda,n_c,R_0(t)),
\end{align}
\end{subequations}
where, 
\begin{equation*}
L(\lambda,n_c,R_0(t))
=\frac{
I_0(\sqrt{n_c\lambda}R_0(t))I_1(\sqrt{\lambda}R_0(t))
-\sqrt{n_c}\,I_1(\sqrt{n_c\lambda}R_0(t))I_0(\sqrt{\lambda}R_0(t))}
{
I_0(\sqrt{n_c\lambda}R_0(t))K_1(\sqrt{\lambda}R_0(t))
+\sqrt{n_c}\,I_1(\sqrt{n_c\lambda}R_0(t))K_0(\sqrt{\lambda}R_0(t))
}.
\end{equation*}

Having determined the nutrient concentration, we next consider the
pressure. Using the expression of $c(r,t)$ in the region $S(t)$,
the first equation in \eqref{eqn:PDE-OP} yields 
\begin{equation}
\label{eq_exact_5}
p(r,t)
= -\frac{G_0}{\lambda}
\bigl(b_0 I_0(\sqrt{\lambda}r)
+ b_1 K_0(\sqrt{\lambda}r)\bigr)
+ \frac{G_0\bar{c}}{4} r^2
+ G_0 A(t)\ln r
+ G_0 B(t),
\end{equation}
with $R_0<r<R_1$ and $A(t)$ and $B(t)$ are unknown coefficients.

The boundary conditions in \eqref{eqn:PDE-OP} imply (we omit the viable $t$ below for notation simplicity)
\begin{subequations}
\label{eqn:ABR0}
\begin{align}
\partial_r p(R_0)=0
&\ \Longrightarrow\
-\frac{1}{\sqrt{\lambda}}
\bigl(b_0 I_1(\sqrt{\lambda}R_0)
- b_1 K_1(\sqrt{\lambda}R_0)\bigr)
+ \frac{R_0}{2}\bar{c}
+ \frac{A}{R_0}=0,
\label{eq_exact_6}\\
p(R_0)=0
&\ \Longrightarrow\
-\frac{1}{\lambda}
\bigl(b_0 I_0(\sqrt{\lambda}R_0)
+ b_1 K_0(\sqrt{\lambda}R_0)\bigr)
+ \frac{R_0^2}{4}\bar{c}
+ A\ln R_0 + B = 0,
\label{eq_exact_7}\\
p(R_1)=0
&\ \Longrightarrow\
-\frac{1}{\lambda}
\bigl(b_0 I_0(\sqrt{\lambda}R_1)
+ b_1 K_0(\sqrt{\lambda}R_1)\bigr)
+ \frac{R_1^2}{4}\bar{c}
+ A\ln R_1 + B = 0.
\label{eq_exact_8}
\end{align}
\end{subequations}

From \eqref{eq_exact_6}--\eqref{eq_exact_7}, one can again solve $A(t)$ and $B(t)$ in terms of $R_0(t)$ and $R_1(t)$
\begin{subequations}
\label{eqn:AB}
\begin{align}
A(t)
&=\frac{R_0(t)}{\sqrt{\lambda}}
\bigl(b_0(t)I_1(\sqrt{\lambda}R_0(t))
- b_1(t)K_1(\sqrt{\lambda}R_0(t))\bigr)
- \frac{R_0^2(t)}{2}\bar{c},\\
B(t)
&=\frac{1}{\lambda}
\bigl(b_0(t)I_0(\sqrt{\lambda}R_0(t))
+ b_1(t)K_0(\sqrt{\lambda}R_0(t))\bigr)
- \frac{R_0^2(t)}{4}\bar{c}
- A(t)\ln R_0(t).
\end{align}
\end{subequations}

Substituting the above expressions into \eqref{eq_exact_8} yields the
transcendental relation between $R_0(t)$ and $R_1(t)$, i.e., provided the position of $R_1(t)$, the core boundary $R_0(t)$ is determined implicitly via the equation
\begin{equation}
\label{eq_exact_10}
F(R_0,R_1)
:= \frac{R_1^2}{4}\bar{c}
+ A\ln R_1 + B
- \frac{1}{\lambda}
\bigl(
b_0 I_0(\sqrt{\lambda}R_1)
+ b_1 K_0(\sqrt{\lambda}R_1)
\bigr)
=0.
\end{equation}

Finally, the evolution of the outer boundary is governed by
\begin{equation}
\label{eq_exact_9}
\frac{dR_1(t)}{dt}
= -\frac{\partial p}{\partial r}(R_1(t))
= G_0\left[
\frac{1}{\sqrt{\lambda}}
\bigl(b_0 I_1(\sqrt{\lambda}R_1)
- b_1 K_1(\sqrt{\lambda}R_1)\bigr)
- \frac{R_1}{2}\bar{c}
- \frac{A}{R_1}
\right].
\end{equation}

By this point, the dynamics of the radial symmetric solution is completely determined.

\bibliographystyle{elsarticle-num} 
\bibliography{cas-refs}







\end{document}